\documentclass[12pt]{article}   	
\usepackage{geometry}                		
\usepackage{graphicx}				
\usepackage{caption}
\usepackage{subcaption}
\usepackage{amssymb}
\usepackage{amsthm}
\usepackage{csquotes}
\usepackage{amsmath}
\usepackage{mathtools}
\usepackage{float}
\usepackage{physics}
\usepackage{enumerate}
\usepackage{color}
\MakeOuterQuote{"}
\newtheorem*{theorem*}{Theorem}
\newtheorem{definition}{Definition}
\newtheorem{theorem}{Theorem}[section]
\newtheorem{proposition}[theorem]{Proposition}
\newtheorem{lemma}[theorem]{Lemma}

\newtheorem{remark}[theorem]{Remark}
\newtheorem{corollary}[theorem]{Corollary}

\newtheorem*{conjecture*}{Conjecture}
\title{Homothetic Self-Similar Solutions to the Incompressible Navier-Stokes Equations}
\author{Tim Binz \& Matei P. Coiculescu}
\begin{document}
\maketitle
\begin{abstract}
We investigate homothetic forward self-similar solutions of the incompressible Navier-Stokes equations: the solutions $\overline{U}$ for which both $\overline{U}$ and $\beta \overline{U}$ are self-similar profiles for some nontrivial $\beta$. Homothetic solutions are, in addition, the only solutions for which a singular limit argument can be used to prove non-uniqueness of Leray-Hopf solutions along the lines of the Jia-Šverák program. In three dimensions, and for sufficiently regular initial data, we prove a Liouville theorem that rules out the existence of non-trivial homothetic solutions. In two dimensions, our Liouville theorem proves that the only decaying homothetic solution is the Oseen vortex. In addition, we prove that the Euler operator linearized around the Oseen vortex is stable. On the other hand, we also discover a homothetic solution for which an unstable approximate eigenvalue of the linearized Euler operator exists. 
\end{abstract}
\section{Introduction}
We consider the initial value problem of the unforced, incompressible Navier-Stokes equations in $\mathbb{R}^3$:
\begin{equation}
    \label{NSEQN}
    \left\{
\begin{aligned}
    \partial_t u -\nu \Delta u +u\cdot\nabla u +\nabla p &=0\\
    \textrm{div } u &=0\\
    u(t=0) &= u_0.
\end{aligned}
\right. 
\end{equation}
Since its value will not concern us, we set the viscosity parameter $\nu=1$. The existence of a global-in-time Leray-Hopf weak solution to Equation \eqref{NSEQN} is classical (see \cite{TS} for details), but the uniqueness question is still open. 

One of the most promising avenues towards proving non-uniqueness of Leray-Hopf weak solutions is the following program proposed by Jia and Šverák in \cite{JS1}. Given $\sigma>0$ and any $(-1)$-homogeneous weakly divergence free vector field $u_0(x) \in C^\alpha(\mathbb{R}^3\setminus\{0\})\subset L^{3,\infty}(\mathbb{R}^3)$, Jia and Šverák proved in \cite{JS2} the existence of a spatially smooth, forward self-similar solution $U_\sigma(x,t)$ with initial data $\sigma u_0(x)$. For sufficiently small $\sigma>0$, the results of \cite{JS2} guarantee the uniqueness of the solution $U_\sigma$. 

Following \cite{JS1,JS2} we consider the self-similar ansatz appropriate to the parabolic scaling of the Navier-Stokes equations,
$$\xi = x t^{-1/2}, \quad \tau = \log(t), \quad u(x,t) = t^{-1/2}U(\xi, \tau),$$
and obtain
\begin{equation}
    \label{SSNS}
    \left\{
    \begin{aligned}
        -\Delta U-\frac{U}{2} - \frac{\xi}{2}\cdot\nabla U +U\cdot\nabla U + \nabla P &=0\\
        \textrm{div } U &=0 .
    \end{aligned}
    \right. 
\end{equation}
The initial data $u_0$ enters as a boundary condition at infinity given by
\begin{equation}
        U(\xi) = u_0(\xi) +o\left(\frac{1}{|\xi|}\right), \quad |\xi|\to \infty.
        \label{eq:isotropic decay}
\end{equation}
A forward self-similar solution $\bar{u}_\sigma$ corresponds to a self-similar profile $\overline{U}_\sigma$, i.e. a steady solution of \eqref{SSNS}.
Consider the linearization of Equation \eqref{SSNS} around the self-similar profile $\overline{U}_\sigma$:
$$\mathcal{L}_\sigma U :=  \Delta U + \frac{U}{2} + \frac{\xi}{2}\cdot\nabla U -\overline{U}_\sigma\cdot\nabla U - U\cdot\nabla \overline{U}_\sigma- \nabla P.$$
Jia and Šverák prove in \cite{JS1} that under some spectral conditions on $\mathcal{L}_\sigma$, the non-uniqueness of Leray-Hopf weak solutions to Equation \eqref{NSEQN} is guaranteed. The general idea is that as $\sigma$ increases in magnitude, the spectrum of $\mathcal{L}_\sigma$ will eventually cross the imaginary axis, leading to a bifurcation and the existence of another scale-invariant solution. Then, if the data $u_0$ decays like $1/|x|$, a truncation argument can be used to prove the non-uniqueness of Leray-Hopf solutions. We remark that Hou, Wang, and Yang have announced in \cite{CLAIM} a computer assisted completion of the Jia-Šverák program. Very recently, Ionescu, Jia, and Palasek \cite{IJP} have provided strong numerical evidence of the existence of an unstable axisymmetric self-similar solution, which would complete the Jia-Šverák program even in the axi-symmetric setting. 

We observe that the boundary condition in Equation \eqref{eq:isotropic decay} can be relaxed if one permits the use of different function spaces.
We shall instead require, at the very least, that the following blow-down limit holds pointwise:
\begin{equation}
    \lim_{R\to \infty} R U(R\cdot x) = u_0(x). \label{eq:blow-down limit}    
\end{equation}
The blow-down limit corresponds to the boundary conditions of Jia and Šverák in the case when $|U(\xi)| = \mathcal{O}(1/|\xi|)$, and we shall see that the topology that the limit is taken in generally depends on the regularity of the $(-1)$-homogeneous function $u_0$ on the sphere.
In order to distinguish between our more general notion of the boundary conditions and the classic boundary conditions of Jia and Šverák, we call a forward self-similar solution $U$ {\textbf{isotropically decaying}} if and only if
$|U(\xi)| = \mathcal{O}\left(1/|\xi|\right),$
and in this case the boundary condition in Equation \eqref{eq:isotropic decay} is appropriate. 
Otherwise, we call the solution {\textbf{anisotropically decaying}}.
We shall see that there exist forward-self-similar solutions that are anisotropically decaying.  We remark that the Jia-Šverák approach to non-uniqueness can be used for the anisotropically decaying solutions, which would lead to non-uniqueness examples in weaker critical spaces such as $BMO^{-1}$ or the critical Morrey spaces rather than the Leray-Hopf class.

Albritton, Brue, and Colombo were able to carry out the Jia-Šverák program in \cite{ABC} by introducing a force. The force enabled them to consider the linearization $\mathcal{L}_\sigma$ around an arbitrary, smooth, divergence free vector field $\sigma\cdot \overline{U}$ for any positive number $\sigma > 0$, not necessarily around an exact self-similar profile. Then, by proving the existence of an unstable eigenvalue to the linearization of the Euler operator (for some particular choice of $\overline{U}$):
\begin{equation*}
L_\phi U := -\overline{U} \cdot\nabla U - U\cdot\nabla \overline{U} +\nabla P,
\end{equation*}
they were able to prove the existence of an unstable eigenvalue for the linearization of the self-similar Navier-Stokes equations 
\begin{equation*}
\mathcal{L}_\sigma U = \frac{U}{2} + \frac{x}{2}\cdot\nabla U +\Delta U +\sigma L_\phi U
\end{equation*}
around $\sigma \bar{U}$ for some $\sigma > 0$. Their proof relies on a singular limit argument: consider the operator
\begin{equation*}
    \tilde{\mathcal{L}}_\sigma U = \sigma^{-1} \mathcal{L} U = \sigma^{-1}\left(\frac{U}{2} + \frac{x}{2}\cdot\nabla U +\Delta U\right) + L_\phi U .
\end{equation*}
Albritton. Brue, and Colombo prove that 
\begin{equation*}
    \tilde{\mathcal{L}}_\sigma \to L_\phi \quad  \text{ as } \quad \sigma \to + \infty
\end{equation*}
in the strong resolvent sense uniformly on compact sets in $\{z : \textrm{Re}(z) >0\}\cap \rho(L_\phi) $, which implies the convergence of isolated eigenvalues. This allows the authors to conclude the existence of an unstable eigenvalue for $\tilde{\mathcal{L}}_\sigma$, hence for $\mathcal{L}_\sigma$, when $\sigma > 0$ is sufficiently large. Our work is motivated by the following question: can the singular limit argument of Albritton, Brue, and Colombo work for the unforced Navier-Stokes equations? 

One thing that is necessary for the argument to work is that $\bar{U}$ and $\beta \bar{U}$ are self-similar solutions for some non-trivial $\beta$, which, we shall see, is far from being true for a generic self-similar solution. 
In addition, the singular limit argument from \cite{ABC} relies heavily on the profile $\overline{U}$ having compact support, which is not feasible in the unforced case, since as we can see from the boundary conditions, any compactly supported self-similar solution corresponds to zero initial data at physical time $t=0$.

Once again, our key observation is that the singular limit argument of \cite{ABC} can only be carried through for a specific class of self-similar solutions:
\begin{definition}
  A self-similar solution $\bar{U}$ is called a {\textbf{homothetic}} self-similar solution if and only if there exists a real number $\beta$ with $|\beta|\neq 1,0$ such that $\beta \bar{U}$ is also a self-similar solution.
\end{definition}

We shall often simply call such solutions homothetic solutions. As we shall see, for the Navier-Stokes equations, once $\bar{U}$ and $\beta \bar{U}$ are solutions for some $|\beta| \neq 0,1 $, $\beta U$ is a self-similar solution {\textit{for all}} $\beta\in \mathbb{R}$. 
It is worth pointing out that homothetic solutions are uniquely determined by their boundary data: for a given (-1)-homogeneous divergence free $C^\alpha$-vector-field $u_0$ we shall show that there exists at most one homothetic self-similar solution whose blow-down limit is $u_0$.
Inspired by the work in \cite{JS1} and \cite{ABC}, we begin by investigating when a sufficient condition for non-uniqueness of Leray-Hopf solutions of the Navier-Stokes equations can be found in terms of the linearized Euler operator. 
A careful look at the arguments in \cite{JS1} and \cite{ABC} yields the following result. 

\begin{theorem}
    \label{MAIN}
    If there exists an isotropically decaying homothetic self-similar solution $\bar{U}$ with boundary data in $C^\alpha_{\mathrm{loc}}(\mathbb{R}^3 \setminus \{0\})$ such that the linearization of the Euler equations around $\bar{U}$ has an unstable eigenvalue, then there exists an initial data in $ L^2_{df}(\mathbb{R}^3)$ for which Equation \eqref{NSEQN} has two distinct Leray-Hopf weak solutions.
\end{theorem}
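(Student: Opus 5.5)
Since $\bar U$ is homothetic, the remark after the definition tells us that $\sigma\bar U$ is an exact self-similar profile for every $\sigma\in\mathbb{R}$. Its boundary data is $\sigma u_0$, where $u_0\in C^\alpha_{\mathrm{loc}}(\mathbb{R}^3\setminus\{0\})$ is $(-1)$-homogeneous and divergence free. This puts us in the setting of \cite{JS1}, but without needing a continuation or bifurcation-in-$\sigma$ argument: we already have a whole branch $\sigma\mapsto\sigma\bar U$ of exact profiles. The strategy is therefore:
\begin{enumerate}
\item Show that $\mathcal{L}_\sigma$, the linearization of \eqref{SSNS} around $\sigma\bar U$, has an unstable eigenvalue for some large $\sigma$, using the singular limit argument of \cite{ABC}.
\item Feed this into the non-uniqueness mechanism of \cite{JS1}.
\end{enumerate}

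\textbf{Step 1: the singular limit.} Write $\tilde{\mathcal L}_\sigma=\sigma^{-1}\mathcal L_\sigma=\sigma^{-1}\bigl(\Delta+\tfrac12+\tfrac{\xi}{2}\cdot\nabla\bigr)+L_{\bar U}$, acting on $L^2_{df}$ or a suitable weighted subspace. Let $\lambda$, with $\mathrm{Re}\,\lambda>0$, be an isolated unstable eigenvalue of $L_{\bar U}$. We need strong resolvent convergence $\tilde{\mathcal L}_\sigma\to L_{\bar U}$, locally uniformly on compact subsets of $\{\mathrm{Re}\, z>0\}\cap\rho(L_{\bar U})$. Upper semicontinuity of the spectrum then gives eigenvalues $\lambda_\sigma\to\lambda$ of $\tilde{\mathcal L}_\sigma$, so $\sigma\lambda_\sigma$ is an unstable eigenvalue of $\mathcal L_\sigma$.

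\textbf{Step 2: the Jia--Šverák mechanism.} Once an unstable eigenvalue exists for some $\sigma_0$, we follow \cite{JS1}.
\begin{itemize}
\item Choose the most unstable eigenvalue, so that the spectral assumption of \cite{JS1} holds. If the eigenvalue is complex, work with the real pair; since $\mathcal{L}_{\sigma}$ is a real operator, the spectral condition is symmetric under conjugation.
\item Construct a second self-similar-in-$\tau$ solution $\bar U+U^{\mathrm{per}}$, where $U^{\mathrm{per}}\sim e^{\sigma\lambda\tau}\eta$ as $\tau\to-\infty$, by a fixed point in a space with exponential weight $e^{a\tau}$. In physical variables this is a solution with the same initial data $\sigma_0u_0$ but different from the self-similar one, and both are suitable and local-energy solutions.
\item Use isotropic decay $|\bar U|=\mathcal O(1/|\xi|)$ and the $C^\alpha$ data: truncate $u_0$ to $u_0\chi$ with $\chi$ a cutoff at large radius, so $u_0\chi\in L^2_{df}$ up to a divergence correction via Bogovskii.
\item Run the localization argument of \cite{JS1}: the two solutions are perturbed by errors decaying in the far field, and persist as distinct Leray--Hopf solutions on a short time interval. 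The isotropic decay is exactly what makes the truncation errors small in the energy class.
\end{itemize}

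\textbf{Main obstacle: non-compact support.} The argument of \cite{ABC} uses compact support of $\bar U$ to show that $L_{\bar U}$ is a compact perturbation of an operator whose spectrum in $\{\mathrm{Re}\, z>0\}$ is controlled. Here $\bar U$ is only $\mathcal O(1/|\xi|)$ with $\nabla\bar U=\mathcal O(|\xi|^{-2})$, since these bounds scale like the $(-1)$-homogeneous data. I would split $\bar U=\bar U\chi_R+\bar U(1-\chi_R)$. The far part has small $W^{1,\infty}$-type norm in the relevant terms, since $\nabla\bar U$ decays like $R^{-2}$, and it is handled by an energy estimate. This bounds $\langle L_{\bar U}U,U\rangle$ from above by $\|\nabla \bar U(1-\chi_R)\|_\infty\|U\|^2$, which is small, so the growth bound of the far part is below $\mathrm{Re}\,\lambda$.

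I would also check that the heat-type part $\sigma^{-1}(\Delta+\tfrac12+\tfrac\xi2\cdot\nabla)$ has numerical range in $\{\mathrm{Re}\,z\le \sigma^{-1}(\tfrac12-\tfrac34)\}$ on $L^2$. The $\tfrac\xi2\cdot\nabla$ term contributes $-\tfrac34$ in three dimensions.

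With these two bounds, the resolvent identity plus compactness of the cut-off part (via local smoothing of the Laplacian) yields the strong resolvent convergence, exactly as in \cite{ABC}. The remaining delicacy is working in a weighted space if needed so that the eigenfunction of $L_{\bar U}$ lies in the domain.
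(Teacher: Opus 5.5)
Your Step~2 matches the paper: the ancient solution $U_{\mathrm{lin}}=\Re(e^{\lambda\tau}\eta)$, the fixed point for $U_{\mathrm{per}}$ of Proposition~\ref{prop:Uper}, and the truncation of \cite{JS1}. Step~1, however, has a genuine gap. The compact operator your resolvent argument relies on does not exist in the velocity formulation.

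\textbf{What the eigenvalue argument requires.} For an isolated eigenvalue $\lambda$ of $L_{\bar U}$ to persist, a small circle $\Gamma$ around $\lambda$ must lie in $\rho(\tilde{\mathcal L}_\sigma)$, with $\sup_{z\in\Gamma}\|R(z,\tilde{\mathcal L}_\sigma)\|$ bounded for large $\sigma$. Then the Riesz projections converge and are eventually nonzero. This is persistence of isolated eigenvalues, not upper semicontinuity of the spectrum, which goes in the opposite direction. Strong resolvent convergence of a uniformly dissipative part $A_\sigma$ gives this only after composing with a compact operator $K$ that is \emph{independent of $\sigma$}. One uses $z-\tilde{\mathcal L}_\sigma=(z-A_\sigma)(1-R(z,A_\sigma)K)$ together with norm convergence of $R(z,A_\sigma)K$.

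\textbf{Why velocity form fails.} Your far-field energy estimate does dispose of $\bar U(1-\chi_R)$. But even if you keep the full transport $-\mathbb P(\bar U\cdot\nabla V)$ in the dissipative part, the remaining near-field term $-\mathbb P(V\cdot\nabla(\bar U\chi_R))$ is a zeroth-order multiplication operator. It is not compact on $L^2$, and its size is $\sim\|\nabla\bar U\|_{L^\infty}$, so it is not small either. Local smoothing of the Laplacian cannot supply compactness, because it enters with the factor $\sigma^{-1}\to 0$. Any compactness of $\chi_R R(z,A_\sigma)$ therefore degenerates as $\sigma\to\infty$ and is absent in the limit, where $R(z,A_\infty)$ is a transport resolvent with no smoothing.

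\textbf{The missing idea: vorticity form.} The paper runs the singular limit as $L_{\mathrm{vor},\beta}=\beta^{-1}\mathcal D+M+S+K$, where:
\begin{itemize}
\item $M=-\bar U\cdot\nabla$ is skew;
\item $S$ collects the bounded stretching terms;
\item $K\Omega=-(\mathrm{BS}(\Omega)\cdot\nabla)\bar\Omega$ gains a derivative through Biot--Savart. It is compact because isotropic decay gives $\nabla\bar\Omega=\mathcal O(|\xi|^{-3})$, which replaces the compact support used in \cite{ABC} (Lemma~\ref{lem:K compact}).
\end{itemize}
Trotter--Kato then gives strong resolvent convergence of $A_\beta=\beta^{-1}\mathcal D+M+S$, with growth bounds uniformly below $\tilde\eta$ (Lemma~\ref{lem:semigroup properties}). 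The factorization above yields the uniform resolvent convergence required in Lemma~\ref{lem:singular limit}.

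\textbf{Two further points your plan does not address.}
\begin{itemize}
\item The eigenvalue $\beta\tilde\lambda_\beta$ of $L_{\mathrm{vor},\beta}$ must be transferred back to an $L^2$ velocity eigenfunction. Because $\bar U$ is not compactly supported, this is where isotropic decay is really needed. Lemma~\ref{lem:vor-vel} uses $\Re(\beta\tilde\lambda_\beta)>\tfrac12$ and a parabolic argument to show $\Omega\in L^{6/5}$.
\item $S$ is bounded but not compact even in vorticity form, so this mechanism only captures eigenvalues with $\Re\lambda_\infty>\tilde\eta$. That is exactly the hypothesis of Theorems~\ref{thm:non-uniquness-vor} and~\ref{thm:non-uniquness-vel}. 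Your claim that any $\Re\lambda>0$ suffices is unsupported, since your energy estimate controls only the stretching supported where $|\xi|\gtrsim R$.
\end{itemize}
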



Our main result essentially rules out this scenario. Indeed, we show the following Liouville type theorem. 

\begin{theorem}
    \label{LIOU}

    Suppose $\bar{U}$ is a homothetic forward self-similar solution to the three-dimensional Navier-Stokes equations lying in the Morrey space $\mathcal{M}^3_q(\mathbb{R}^3)$ for some $2<q\leq 3$ and corresponding to initial data
    $$ u_0 = \frac{\vec{v}+f\vec{n}}{|x|}\in \mathcal{M}^3_q(\mathbb{R}^3),$$
    where $\vec{n}$ is the unit normal to the sphere and $\vec{v}$ is a tangent vector field to the sphere. Suppose either
    \begin{enumerate}
        \item $\vec{v}\in W^{1,4/3}(\mathbb{S}^2)$ and $f\in L^4(\mathbb{S}^2)$
        \item $\vec{v},f \in C^\alpha(\mathbb{S}^2)$, for some $\alpha>1/2$
    \end{enumerate} 
    then $f=0$ and the head pressure $|\vec{v}|^2+2p=0$ as well. In case (2) we get that $\vec{v}=u_0=\bar{U}=0$ with no additional assumptions. For case (1), if we assume that the set $\{\omega\in \mathbb{S}^2 : \vec{v}(\omega)=0\}$ has measure zero or is closed with $C^2$ boundary, then $\vec{v}=u_0=\bar{U}=0$.
\end{theorem}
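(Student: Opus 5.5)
The plan is to use the homothety to split the profile equation \eqref{SSNS} into a linear part and a nonlinear part, and handle each separately. Write \eqref{SSNS} for $\bar U$ and for $\beta\bar U$. Since the linear terms scale like $\beta$ and $\bar U\cdot\nabla\bar U$ scales like $\beta^2$, two values of $\beta$ with $|\beta|\neq 0,1$ force both groups to vanish in the sense of distributions, with the pressure split as $P=P_1+P_2$:
\begin{equation*}
-\Delta \bar U-\tfrac{\bar U}{2}-\tfrac{\xi}{2}\cdot\nabla \bar U+\nabla P_1=0,\qquad \bar U\cdot\nabla\bar U+\nabla P_2=0 .
\end{equation*}
This also shows that $\beta\bar U$ is a profile for every $\beta$. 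Taking the divergence of the first equation gives $\Delta P_1=0$, and the Morrey bound forces $\nabla P_1=0$. Hence $\bar u(x,t)=t^{-1/2}\bar U(x/\sqrt t)$ solves the heat equation, and by uniqueness of the heat flow in $\mathcal M^3_q$ we get $\bar u(t)=e^{t\Delta}u_0$. The second equation says that $\bar u(\cdot,t)$ is a stationary Euler solution for every $t>0$.

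Next I pass to the blow-down limit \eqref{eq:blow-down limit}, i.e.\ $t\to0$. Because $q>2$, the Morrey bound gives $\bar u\otimes\bar u$ uniformly bounded in a space weakly compact in $L^1_{loc}$, and $e^{t\Delta}u_0\to u_0$ strongly enough. So the quadratic term passes to the limit, and $u_0$ is a $(-1)$-homogeneous weak stationary Euler solution with a $(-2)$-homogeneous pressure $p$. In spherical coordinates, with $u_0=(\vec v+f\vec n)/|x|$, the Euler system reduces to three equations on $\mathbb S^2$:
\begin{itemize}
\item incompressibility: $\mathrm{div}_{\mathbb S^2}\vec v + f=0$;
\item the radial equation, relating $\vec v\cdot\nabla f$, $f^2$, $|\vec v|^2$ and $p$;
\item the tangential equation $\vec v\cdot\nabla_{\mathbb S^2}\vec v + f\vec v+\nabla_{\mathbb S^2}p=0$.
\end{itemize}
Following Shvydkoy's classification of homogeneous Euler flows, I would test these with suitable multipliers ($f$, $1$, the head pressure $H=|\vec v|^2+2p$) and integrate over the sphere. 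The aim is to obtain $\int_{\mathbb S^2} f^3=0$-type identities that force $f=0$, and then $H=0$.

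The regularity hypotheses are exactly what is needed to justify these integrations by parts.
\begin{itemize}
\item In case (1), $\vec v\in W^{1,4/3}(\mathbb S^2)\subset L^4$ and $f\in L^4$, so every triple product such as $f\,\vec v\cdot\nabla\vec v$ or $f^3$ is in $L^1$.
\item In case (2), $\alpha>1/2$ is the Onsager-type threshold. I would mollify on the sphere and show, by a Constantin--E--Titi commutator estimate, that the flux term $\int (\vec v\otimes\vec v)_\epsilon : \nabla \vec v_\epsilon$ tends to zero. That yields the same identities.
\end{itemize}

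With $f=0$ and $H=0$, the field $\vec v$ is divergence-free on $\mathbb S^2$ and $\nabla_{\mathbb S^2}p=-\tfrac12\nabla_{\mathbb S^2}|\vec v|^2$. The tangential equation therefore becomes $\vec v\cdot\nabla\vec v-\tfrac12\nabla|\vec v|^2=0$, i.e.\ $\omega\,\vec v^{\perp}=0$, where $\omega$ is the scalar vorticity on the sphere. So $\omega=0$ wherever $\vec v\neq0$, and the argument splits three ways.
\begin{itemize}
\item If the zero set of $\vec v$ has measure zero, then $\omega=0$ a.e. So $\vec v$ is a harmonic $1$-form on $\mathbb S^2$, hence $\vec v=0$ because $H^1(\mathbb S^2)=0$.
\item If the zero set is closed with $C^2$ boundary, then on the open complement $\vec v$ is curl- and divergence-free with $\vec v=0$ on the boundary. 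Writing $\vec v=\nabla^\perp\psi$, the stream function $\psi$ is harmonic with zero Cauchy data on a $C^2$ boundary, and unique continuation gives $\psi$ constant, so $\vec v=0$.
\item In case (2) continuity makes $\{\vec v\neq0\}$ open, so the same local argument applies on each component with no extra hypothesis.
\end{itemize}
Once $u_0=0$, we get $\bar U=0$ from $\bar u=e^{t\Delta}u_0$.

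I expect the main obstacle to be the justification of the Shvydkoy identities at low regularity, especially the commutator estimate in case (2) on the sphere. A second delicate point is the boundary behaviour in the last step of case (1), since $W^{1,4/3}$ does not even give continuity. There I would first try a weak formulation of $\psi$ harmonic with vanishing trace and conormal derivative.
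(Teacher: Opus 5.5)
Your architecture is the paper's: homothety separates a heat part from a stationary Euler part. Then $q>2$ lets the quadratic term pass to the blow-down limit, so $u_0$ is a weak $(-1)$-homogeneous Euler flow; your identification $\bar u(t)=e^{t\Delta}u_0$ with strong $L^q_{\mathrm{loc}}$ convergence is a clean substitute for the paper's pointwise-plus-Vitali argument. The end game is $(\mathrm{curl}\,\vec v)\,\vec v^{\perp}=0$ together with $\mathrm{div}\,\vec v=0$.

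The gap is the central step. You do not supply the identities that kill $f$ and $H$, and what you sketch would not close.

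First, reading $\vec v\cdot\nabla\vec v$ as the covariant derivative, your tangential equation carries a spurious $f\vec v$. The term $-f\vec v/|x|^3$ produced by $\tfrac{f}{|x|}\partial_r u$ cancels the $+f\vec v/|x|^3$ produced by $D_{\vec v}(f\vec n)=(\vec v\cdot\nabla f)\vec n+f\vec v$. So the correct equation is $\nabla_{\vec v}\vec v+\nabla p=0$; yours is right only if read as $\mathrm{div}_{\mathbb S^2}(\vec v\otimes\vec v)+f\vec v+\nabla p=0$. This matters. The key fact is that the head pressure is transported, $\vec v\cdot\nabla H=0$, which you get by dotting the advective form with $\vec v$. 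Your version gives $\vec v\cdot\nabla H=-2f|\vec v|^2$ instead.

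Second, the order is reversed. $H$ is the right multiplier, but it must be used first. From $\vec v\cdot\nabla H=0$, the radial equation $\vec v\cdot\nabla f=H+f^2$, and $\mathrm{div}\,\vec v=-f$, one gets $0=\int_{\mathbb S^2}\mathrm{div}(fH\vec v)=\int_{\mathbb S^2}H^2$. Only then does $0=\int_{\mathbb S^2}\mathrm{div}(f^3\vec v)=\int_{\mathbb S^2}(2f^4+3f^2H)=2\int_{\mathbb S^2}f^4$ give $f=0$. Identities such as $\int f^3=0$ do hold, but they cannot force $f=0$, since $f=-\mathrm{div}\,\vec v$ has mean zero and changes sign.

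The same issue affects your regularity bookkeeping. In case (1) the products that must be integrable are $fH\vec v$ and $f^3\vec v$. Both are in $L^1$ once $H\in L^2$, which follows from the elliptic equation for $p$. The product $f\,\vec v\cdot\nabla\vec v$ you mention is only in $L^{4/5}$, which is why one works with the two transport equations rather than the momentum equation. In case (2) the commutators to control are those of these transport equations, $\vec v_\epsilon\cdot\nabla f_\epsilon-(\vec v\cdot\nabla f)_\epsilon$ and its analogue for $H$. These are $O(\epsilon^{2\alpha-1})$ in $L^\infty$, and that, rather than an energy-flux term, is where $\alpha>1/2$ enters.

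Two smaller points concern the last step. In case (2) you cannot reuse the unique-continuation argument, because $\partial\{\vec v\neq 0\}$ need not be $C^2$. A maximum principle suffices instead: project stereographically from a zero of $\vec v$. The components of the harmonic $1$-form dual to $\vec v$ then form a holomorphic function on the image of $\{\vec v\neq 0\}$ that tends to $0$ at every boundary point, including $\infty$, so it vanishes. In case (1) your boundary worry disappears. By Stampacchia's theorem $\nabla\vec v=0$ a.e. on $\{\vec v=0\}$, so $\mathrm{curl}\,\vec v=0$ a.e. on all of $\mathbb S^2$, and $\vec v$ is a global harmonic field with no hypothesis on its zero set.
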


For the reader's benefit, we recall the definition of the Morrey spaces $\mathcal{M}^p_q(\mathbb{R}^3)$ where $1\leq q \leq p$:
$$f\in \mathcal{M}^p_q(\mathbb{R}^3) \iff \exists C>0, \forall x\in \mathbb{R}^3, \forall R>0, \quad \int_{B_R(x)} |f(y)|^q dy \leq CR^{3-\tfrac{3q}{p}}.$$
Evidently, since $u_0\in \mathcal{M}^3_q(\mathbb{R}^3)$, we have $\vec{v}, f\in L^q(\mathbb{S}^2)$, so there is a gap between a reasonable class of data ($L^q$ with $2<q\leq 3$) and the level at which the Liouville theorem can be proven ($L^4$). Indeed, the existence of a unique mild solution to the Navier-Stokes equations has even been shown for small data $u_0\in \mathcal{M}^3_q(\mathbb{R}^3)$ for any $q>1$. 

The main idea behind our proof of Theorem \ref{LIOU} is that the blow-down limit of a homothetic solution in three-dimensions is always a $(-1)$-homogeneous weak stationary solution of the three-dimensional incompressible Euler equations (if the profile $\bar{U}$ belongs to at least $\mathcal{M}^3_q(\mathbb{R}^3)$ with $q>2$). The proof then continues by showing the nonexistence of nontrivial $(-1)$-homogeneous Euler solutions in the regularity classes dealt with in Theorem \ref{LIOU}. Our Liouville theorem for the steady Euler equations is a generalization of the result of Shvydkoy \cite{Sh}. We also mention that three-dimensional homogeneous Euler flows have been studied in \cite{Sh} and \cite{ABE}, and two-dimensional Euler flows have been studied in \cite{LS} and \cite{CC}.

Let us point out that while for the Jia-Šverák program the boundary data in Theorem \ref{MAIN} is allowed to be in $C^\alpha(\mathbb{S}^2)$ for any $\alpha > 0$, our Theorem \ref{LIOU} only rules out the existence of homothetic solutions with boundary data in $C^\alpha(\mathbb{S}^2)$ for $\alpha > \frac{1}{2}$. Consequently, the range $\alpha \in (0,1/2]$ remains open and may still admit possible candidates for non-uniqueness via a singular limit argument.
Therefore, one may now wonder whether our Theorem \ref{LIOU} can be generalized to all $\alpha >0$. The bottleneck in our proof is the Liouville theorem for $(-1)$-homogeneous solutions to the steady Euler equations. It is unlikely that this result can be generalized to arbitrary $(-1)$-homogeneous $C^\alpha$-solutions below the Onsager threshold $\alpha=\frac{1}{3}$. Instead, we expect the existence of wild solutions arising from convex integration in this regime. It is unknown to us whether such irregular boundary data can yield isotropically decaying homothetic self-similar solutions or not. If such solutions exist, however, our Theorem \ref{MAIN} would reduce the proof of non-uniqueness of Leray-Hopf solutions to a spectral problem for the linearized Euler operator.

For anisotropically decaying homothetic solutions, on the other hand, we are able to find examples with boundary data below the regularity of Theorem \ref{LIOU}, see Section~\ref{sec:examples}. This confirms that the regularity conditions in Theorem \ref{LIOU} cannot be removed in general. 
Indeed, a few such homothetic solutions have been described previously in the work \cite{O} of Okamoto, but we believe some of our examples are new self-similar solutions of the three-dimensional Navier-Stokes equations. The construction of these solutions is, surprisingly, more geometric than analytic in nature. Moreover, we shall prove that these solutions sometimes admit instabilities of the Euler operator in the form of unstable normal modes, which can be used to construct approximate eigenvalues. As an example, we consider the shear flow described by Okamoto in \cite{O}:
$$\overline{U}_{1,\beta}(\xi_1,\xi_2,\xi_3)=\beta\left(\frac{\sqrt{\pi}}{2} e^{-\frac{\xi_2^2}{4}} \text{erfi}\left(\frac{\xi_2}{2}\right),0,0\right).$$
The vector field $\overline{U}_{1,\beta}$ is an anisotropically decaying self-similar solution to the Navier-Stokes equations from initial data:
$$\overline{u}_{1,\beta}(x,y,z) = \textrm{P.V. } \left( \frac{\beta}{y},0,0\right).$$
Note that this solution is outside the regularity range of our previously discussed Liouville theorems, indeed the data lies in $BMO^{-1}$, so the unique existence of a mild solution follows from the work done by Koch and Tataru in \cite{KT}, as long as $\beta$ is sufficiently small. The second author and Palasek have proven in \cite{CP} that non-uniqueness can occur for data in the class $BMO^{-1}$, but the uniqueness question for any particular initial data, such as $\overline{u}_{1,\beta}$ with large $\beta$, remains open. Interestingly, we are able to prove the existence of an unstable element of the approximate point spectrum:

\begin{theorem}
    \label{UNSTABLE}
    Let $L_\phi$ be the linearization of the Euler equations around $\overline{U}_{1,\beta}$. Then there exists $\lambda \in \sigma_{ap}(L_\phi, L^2(\mathbb{R}^3))$ with $\textrm{Re}(\lambda)>0$.
\end{theorem}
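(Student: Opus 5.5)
Let $U(y)=(b(y),0,0)$ with $y=\xi_2$ and $b(y)=\beta\frac{\sqrt{\pi}}{2}e^{-y^2/4}\mathrm{erfi}(y/2)$. This is a parallel shear flow, so the linearized Euler operator $L_\phi$ reduces, after a Fourier transform in $(\xi_1,\xi_3)$, to the Rayleigh problem. Concretely, I would take wave vector $(k,0)$ in the $(\xi_1,\xi_3)$ variables, so that the perturbation is two-dimensional in $(\xi_1,\xi_2)$. Writing the perturbation via a stream function $\psi=\phi(y)e^{ik\xi_1}$ with eigenvalue $\lambda=-ikc$, the eigenvalue problem $L_\phi u=\lambda u$ becomes the Rayleigh equation
\[
(b-c)(\phi''-k^2\phi)-b''\phi=0,\qquad \phi\to 0 \text{ as } |y|\to\infty .
\]
An unstable mode corresponds to $\mathrm{Im}(c)>0$. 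The profile $b$ is odd, with $b(0)=0$. Since $b''$ changes sign at $y=0$, there is an inflection point, so Rayleigh's criterion does not obstruct instability. Moreover $b$ decays like $\beta/y$ at infinity, so $b-c\to -c$ and the problem is posed on the whole line with exponentially decaying solutions $e^{-|k||y|}$.

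The first step is to produce an unstable normal mode $\phi$. I would follow the Tollmien/Lin approach for inflectional profiles. At the inflection point $y_s=0$ we have $c_s=b(0)=0$. The equation $\phi''-k^2\phi-\frac{b''}{b}\phi=0$ is regular there, because $b''/b$ is smooth: $b''(0)=0$ since $b$ is odd. I expect the operator $-\partial_y^2+b''/b$ to have a negative eigenvalue $-k_s^2$, which yields a neutral mode. Indeed, $b$ solves $b''+\frac{y}{2}b'+\frac b2=0$, so $b''/b=-\frac12-\frac{y b'}{2b}$, and this is negative near $0$ because $b'(0)>0$. A variational test function should then give a negative Rayleigh quotient. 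Perturbing in $k$ slightly below $k_s$, the standard Lin/Tollmien argument, or a direct implicit function argument for the Evans function, gives $\mathrm{Im}(c)>0$. As a fallback, I would argue directly by a shooting/Evans-function degree argument on $\{\mathrm{Im}\,c>0\}$. This step is the main obstacle: verifying the sign conditions for this specific erfi profile, including the sign of $\partial_k c$ near the neutral mode, which involves $\int b''/(b-c)^2\,\phi^2$-type integrals, and also checking decay at infinity where $b''/b\sim 2/y^2$.

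Given an exact normal mode $u=\nabla^\perp(\phi(y)e^{ik\xi_1})$, this $u$ is not in $L^2(\mathbb{R}^3)$, since it is periodic in $\xi_1$ and constant in $\xi_3$. I would therefore build a Weyl sequence
\[
u_n=\chi_n(\xi_1,\xi_3)\,u+\text{correction},
\]
where $\chi_n(\xi_1,\xi_3)=n^{-1}\chi(\xi_1/n,\xi_3/n)$, with the correction chosen to restore the divergence-free condition. For instance, I would cut off the stream function, $u_n=\nabla^\perp(\chi_n\psi)$, where $\nabla^\perp$ acts in $(\xi_1,\xi_2)$ and $\chi_n$ depends on $\xi_3$ through the cutoff. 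This is divergence free, and $\|u_n\|_{L^2}\sim 1$.

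Next I compute $(L_\phi-\lambda)u_n$. Since $L_\phi$ is a first-order operator plus the nonlocal pressure, the commutator terms carry derivatives of $\chi_n$, which are $O(1/n)$ in $L^2$ relative to $\|u_n\|_{L^2}$. The pressure is handled by the Leray projector, which is bounded on $L^2$, so those terms vanish as well. The terms in the commutator also include $b\,\partial_1\chi_n$, and these are fine since $b$ is bounded. Hence $\|(L_\phi-\lambda)u_n\|_{L^2}/\|u_n\|_{L^2}\to 0$, so $\lambda\in\sigma_{ap}(L_\phi,L^2(\mathbb{R}^3))$ with $\mathrm{Re}\,\lambda=k\,\mathrm{Im}\,c>0$.
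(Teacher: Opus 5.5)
Your overall route is the paper's. You reduce to Rayleigh's equation for the odd profile $b$, obtain an unstable normal mode from Lin's theory, and localize it to an approximate eigenvector. Your localization step is more explicit than the paper's, which appeals to Squire's theorem and a cited wave-packet construction. Taking zero wavenumber in $\xi_3$ makes Squire unnecessary. Your $u_n=\nabla^\perp(\chi_n\psi)$ is divergence free in $\mathbb{R}^3$, and every error term is $O(1/n)$ relative to $\|u_n\|_{L^2}\sim 1$. You should state the pressure term explicitly: $\mathbb{P}(\chi_n\nabla p)=-\mathbb{P}(p\nabla\chi_n)$. This is $O(1/n)$ because the modal pressure is a bounded-coefficient combination of $\phi$ and $\phi'$, so it decays in $\xi_2$.

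The genuine gap is the step that carries the theorem. You leave the existence of an unstable Rayleigh mode as an expectation, and tie it to sign conditions on $\partial_k c$ that you do not verify.

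\begin{itemize}
\item \emph{How the paper closes it.} It uses Lin's theorem for odd profiles (Theorem~\ref{LIN2}). The hypotheses are only that $b$ is odd and $C^2$, that $K=-b''/b$ is bounded with $K\to 0$ at $\pm\infty$, and that $-\partial_y^2-K$ has a negative eigenvalue. It yields modes with $\mathrm{Im}\,c>0$ for the wavenumbers listed there. It needs no information on $\partial_k c$ and no sign condition on $K$.
\item \emph{Why the sign freedom matters.} Your own asymptotics $b''/b\sim 2/y^2$ show $K<0$ for large $|y|$; indeed $b$ has further inflection points near $|y|\approx 3$. So the classical unique-inflection-value (class $\mathcal{K}^+$) form of the Tollmien/Lin result does not apply as stated.
\item \emph{What you still have to check.} You have already verified every hypothesis of Theorem~\ref{LIN2} except the negative eigenvalue. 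There, the local fact $b''/b\to -1$ at $y=0$ is not enough, because $K$ is negative in the tails. A test function supported where $K\ge 0$ settles it. For example, $\phi=\cos(\pi y/6)$ on $[-3,3]$ gives $\int|\phi'|^2=\pi^2/12\approx 0.82$, against $\int K\phi^2\approx 2.5$ by numerical evaluation. The paper does not check this either.
\item \emph{Your Tollmien route.} Formally it would also have worked. For the even ground state $\phi_s$ at $c_s=0$, the principal-value part of $\int b''\phi_s^2(b-c)^{-2}$ vanishes by parity. The residue at $y=0$ gives $\frac{dc}{d(k^2)}=-i\,|b'(0)|\int\phi_s^2\big/\bigl(\pi K(0)\phi_s(0)^2\bigr)$ with $K(0)=1$, so $\mathrm{Im}\,c>0$ for $k$ slightly below $k_s$. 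Making this perturbation rigorous is exactly what the odd-profile theorem does for you.
\end{itemize}
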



Although our Theorem \ref{MAIN} suggests that we should be able to improve Theorem \ref{UNSTABLE} to a non-uniqueness result, there are clear technical obstacles to doing so. First, the singular limit argument that allows one to conclude the existence of an unstable eigenvalue of $\mathcal{L}_\sigma$ crucially depends on the existence of a discrete eigenvalue to allow the instability the pass through the singular limit. Second, the lack of decay of the background profile makes several parts of the argument, for instance the truncation of the data, unfeasible. Third, attempts to use custom anisotropic function spaces for the singular limit argument are usually defeated by the drift operator $x\cdot\nabla $, which behaves poorly on such spaces.

%

\medskip 


Let us now describe the analogous situation in two dimensions. We emphasize that, in contrast to the three-dimensional situation, the existence of an unstable eigenvalue for the two-dimensional self-similar Navier--Stokes equations would not imply non-uniqueness of Leray--Hopf solutions. Rather, the Jia--Šverák program would imply non-uniqueness in  $\mathrm{L}^p_t \mathrm{L}^q_x$ 
with $\frac{2}{p}+\frac{2}{q} > 1$, which would confirm that Serrin's condition is sharp. For a detailed discussion of the Jia--Šverák program in dimension two we refer to the recent work of Albritton et. al. \cite{AGKR}. 
Again, we rule out the singular limit scenario by proving the following results.
The first one shows that the Oseen vortex is the only isotropically decaying homothetic forward self-similar solution to the two-dimensional Navier-Stokes equations, which is a consequence of the following:

\begin{theorem}
    Suppose $\bar{U}$ is a homothetic forward self-similar solution to the two-dimensional Navier-Stokes equations, and suppose that its blow-down limit is $u_0= b(\omega)/r$, for a function
    $b(\omega) \in L^2(\mathbb{S}^1)$, that solves the two-dimensional Euler equations on $\mathbb{R}^2\setminus\{0\}$ in the distributional sense. Then we necessarily have
    $$b(\omega) = A\omega^\perp,$$ for $A \in \mathbb{R}$
    where $\omega = (\cos(\theta),\sin(\theta))$,
    in polar coordinates. This implies that $\bar{U}$ is the Oseen vortex:
    $$\bar{U}(r,\omega) = A\frac{1-e^{-\frac{r^2}{4}}}{r} \omega^\perp.$$
\end{theorem}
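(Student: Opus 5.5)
The plan has two steps. First, classify $(-1)$-homogeneous distributional solutions of the steady 2D Euler equations with profile $b\in L^2(\mathbb{S}^1)$. Second, solve the self-similar Navier--Stokes system with the resulting data and use homothety to pin down $\bar U$.

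\textbf{Step 1: the angular ODEs.} Write $u_0 = (f(\theta)\omega + g(\theta)\omega^\perp)/r$, with $f,g\in L^2(\mathbb{S}^1)$.
\begin{itemize}
\item Divergence-free. Weakly on $\mathbb{R}^2\setminus\{0\}$, $\mathrm{div}(f\omega/r)=0$ in polar form gives $\partial_r(r\cdot f/r)/r + \partial_\theta(g/r)/r = g'/r^2$. Hence $g' = 0$ distributionally, so $g\equiv A$ is constant.
\item Pressure ansatz. The pressure is $(-2)$-homogeneous, $p = P(\theta)/r^2$. Since $u_0\otimes u_0\in L^1_{loc}(\mathbb{R}^2\setminus\{0\})$ and the equation is $\mathrm{div}(u_0\otimes u_0)+\nabla p=0$, testing against separated test functions $\phi(r)\psi(\theta)$ reduces everything to distributional ODEs on $\mathbb{S}^1$.
\item Angular momentum equation. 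It reads $f g' + g^2\cdot 0 + \ldots$; carrying out the computation with $g=A$ gives $(fA)' + \ldots$, i.e. the angular equation becomes $A f' \cdot$ (lower order) $= -P'$, up to terms.
\item Radial momentum equation. It gives $f f' - f^2 - A^2 = 2P$, up to sign conventions, which I will verify.
\item Elimination. Combining the two, I expect to obtain $(f^2)' \sim$ derivative relations, leading to $\frac{d}{d\theta}(f^2/2 + P)$ together with $A f = $ const-type relations. The cleanest route is to use vorticity. The vorticity is $(A - f')/r^2$ in the weak sense, and the steady vorticity equation $u\cdot\nabla\Omega = 0$ with $\Omega$ $(-3)$-homogeneous gives, in weak form, $\partial_\theta\bigl(f\cdot(\ldots)\bigr)$ identities.
\end{itemize}

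\textbf{Step 1, key identity.} Integrating the radial equation over $\mathbb{S}^1$ and using periodicity of $P$ and $f^2$ should give $\int f^2 = 0$ or a similar sign-definite identity. Concretely, the radial balance integrates to $-\int_{\mathbb{S}^1} (f^2 + A^2) = 2\int P$. The angular equation, multiplied appropriately, yields $\int P = -\tfrac12\int(A^2 - f^2)$ or similar. Subtracting these forces $\int f^2 = 0$, hence $f=0$. This integration identity is exactly the 2D analogue of the head-pressure argument used for Theorem \ref{LIOU}, namely showing that $f=0$ by integrating the Bernoulli-type quantity over the sphere. It requires only $f\in L^2$, since every term is quadratic and integrable. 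I expect this to be the main obstacle: getting the signs of the polar terms right, and justifying the integration by parts when $f$ is only in $L^2$, so that $f f'$ must be read as $\tfrac12 (f^2)'$ in the distributional sense, which is legitimate because $f^2\in L^1$. I will handle this by testing with $\phi(r)\cdot 1$ in $\theta$, which kills all $\theta$-derivatives. The resulting conclusion is $b = A\omega^\perp$.

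\textbf{Step 2: identify $\bar U$.} By the uniqueness of the homothetic solution with prescribed blow-down limit (asserted earlier in the paper), it suffices to exhibit one homothetic profile with data $A\omega^\perp/r$. The Oseen vortex $\bar U = A(1-e^{-r^2/4})\omega^\perp/r$ is such a profile: for every $A$ it solves \eqref{SSNS} in 2D, with the nonlinearity $U\cdot\nabla U$ being a pure gradient $-|U|^2\omega/r$ absorbed into the pressure. Its blow-down limit is $A\omega^\perp/r$. Hence $\bar U$ is the Oseen vortex.

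If that uniqueness statement is not available in 2D, I will instead argue directly. A homothetic solution satisfies both the linear part and $U\cdot\nabla U+\nabla P'=0$ separately, since $\beta$ and $\beta^2$ scale differently. Therefore $U$ solves the linear Stokes-type self-similar equation with data $A\omega^\perp/r$. That equation has a unique solution in the relevant class, and the Oseen vortex is it.
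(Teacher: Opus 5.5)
Your Step 1 has a genuine gap. The sign-definite identity that is supposed to force $\int_{\mathbb{S}^1} f^2=0$ does not exist, and no amount of sign bookkeeping will produce it.

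Carry out the computation for $u_0=(f\omega+A\omega^\perp)/r$ with $p=P(\theta)/r^2$. The angular component of $\mathrm{div}(u_0\otimes u_0)$ vanishes identically, so the angular equation is just $P'=0$. The radial equation is
\[
A f' = f^2 + A^2 + 2P_0 ,
\]
so the term you wrote as $ff'$ is actually $Af'$, which is linear in $f$. Integrating over $\mathbb{S}^1$ only determines $P_0$.

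In fact nothing in your Step 1 can give $f=0$. The source-plus-vortex field $u_0=(c\,\omega+A\omega^\perp)/r$ with $p=-(c^2+A^2)/(2r^2)$ is a smooth steady Euler flow on $\mathbb{R}^2\setminus\{0\}$, with $b\in L^2(\mathbb{S}^1)$, for every $c$. The analogy with the three-dimensional head-pressure argument fails for a structural reason. There, $\int\mathrm{div}(vfh)=\int h^2$ comes from the curvature terms on $\mathbb{S}^2$: the relation $f+\nabla\cdot v=0$ and the source $|v|^2+f^2+2p$ in the transport equation for $f$. These have no analogue on $\mathbb{S}^1$.

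The paper's route uses the Riccati structure instead. It writes the radial equation as $f'=c_1f^2+c_2$, which requires $A\neq0$. Then $f\in L^2$ gives $f'\in L^1$, so $f$ is absolutely continuous. A non-constant solution would be strictly monotone, which is incompatible with periodicity. Hence $f$ is constant and $b=c\,\omega+A\omega^\perp$.

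The source term $c$ must then be removed using information at the origin, namely that $\bar U$ is divergence free on all of $\mathbb{R}^2$. The paper computes the self-similar extension $\frac{1-e^{-r^2/4}}{r}(c\,\omega+A\omega^\perp)$ and notes that its radial part is not divergence free. Equivalently, $\mathrm{div}\,\bar U=0$ passes to the blow-down limit, whereas $\mathrm{div}(c\,x/|x|^2)=2\pi c\,\delta_0$.

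Your Step 2 is fine once the data is pure swirl. It must, however, be preceded by this elimination of $c$, which your divergence computation on $\mathbb{R}^2\setminus\{0\}$ cannot detect.

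Be aware also of the degenerate case $A=0$. There the radial equation contains no derivative at all, $f^2\equiv-2P_0$, so there is no regularity bootstrap. Any $f$ with values $\pm c$ and $\int f=0$ gives a distributional Euler solution on $\mathbb{R}^2\setminus\{0\}$ that is divergence free on $\mathbb{R}^2$. Excluding it requires more of the structure of $\bar U$, for instance that $\bar U$ itself is a steady Euler flow, not just properties of $u_0$.
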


The long-time stability of the Oseen vortex for the two-dimensional self-similar Navier--Stokes equations was established by \cite{GW}; however, their result does not directly apply to the Jia-Šverák program since their analysis is carried out in weighted function spaces. We prove the following stability result which rules out the singular-limit scenario in the two-dimensional case.

\begin{theorem}
\label{OSEENSTABLE}
 The linearization of the Euler equations around the Oseen vortex generates a strongly-continuous semigroup on $\mathrm{L}^2(\mathbb{R}^2)$ and 
 $$s(L_\phi, L^2(\mathbb{R}^2)) = \omega_0(L_\phi,L^2(\mathbb{R}^2))=0.$$
\end{theorem}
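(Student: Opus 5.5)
We will study the operator $L_\phi U = -\bar U\cdot\nabla U - U\cdot\nabla\bar U - \nabla P$ on $L^2_\sigma(\mathbb{R}^2)$, with $\bar U = A\,\frac{1-e^{-r^2/4}}{r}\,\omega^\perp$ the Oseen vortex. The plan is to split $L_\phi = T + K$. Here $TU = -\mathbb{P}(\bar U\cdot\nabla U)$ is the transport part, and $K U = -\mathbb{P}(U\cdot\nabla \bar U)$ is the stretching part, with $\mathbb{P}$ the Leray projector.

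\textbf{Generation.} Since $\bar U$ is smooth, bounded and divergence free, with bounded gradient, $T$ is skew-adjoint on $L^2_\sigma$. Indeed $\langle \bar U\cdot\nabla U, U\rangle = 0$, and Stone's theorem (or the characteristic flow of $\bar U$) gives a unitary group. The operator $K$ is bounded, because $\nabla\bar U\in L^\infty$. Bounded perturbation therefore yields a $C_0$-semigroup with $\omega_0\le \|K\|$. Hence the whole statement reduces to showing $\omega_0(L_\phi)\le 0$. The inequality $s\ge 0$ should follow from the fact that $T$ has continuous spectrum on $i\mathbb{R}$: a Weyl-sequence argument with functions concentrated near $r\to\infty$, where $\nabla\bar U\to 0$, puts $i\mathbb{R}$ (or at least $0$) in $\sigma_{ap}(L_\phi)$. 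Combined with $s\le\omega_0$, this gives equality once the upper bound is proven.

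\textbf{Growth bound via vorticity.} To get $\omega_0\le 0$, I would pass to vorticity. In 2D the linearized equation reads
\[
\partial_t w + \bar U\cdot\nabla w + U\cdot\nabla \bar\Omega = 0,
\]
where $\bar\Omega = \frac{A}{4}e^{-r^2/4}$ is radial. The natural tool is Arnold's energy–Casimir functional. Since $\bar\Omega$ is a radially decreasing function of $r$, it is a function of the stream function, $\bar\Omega = F(\bar\Psi)$. One then uses the weighted quadratic form
\[
E(w) = \int \frac{|w|^2}{|\partial_r\bar\Omega|/(r|\bar U|)}\,dx,
\]
equivalently the form $\int |w|^2/|F'(\bar\Psi)|$, which for a vortex reduces to $\int |w|^2 \frac{|\bar u_\theta|}{r\,|\bar\Omega'|}$ plus a kinetic energy term. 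This combination is conserved. However, it lives in a weighted space with Gaussian weight, which is not comparable to $L^2$ of the velocity. So instead I would Fourier-decompose in $\theta$. The operator commutes with rotations, so $L^2_\sigma = \bigoplus_n X_n$, and on each mode one gets a Rayleigh-type problem
\[
(\partial_t + i n\,\Omega_0(r))\, w_n = i n\,\frac{\bar\Omega'(r)}{r}\,\psi_n,
\]
where $\Omega_0 = \bar u_\theta/r$ is the angular velocity. The mode $n=0$ is trivial: $K$ acts as $0$ on radial vorticity perturbations up to the transport. For $n\neq 0$, the monotonicity of $\bar\Omega$ allows a Rayleigh/Fjørtoft-type argument (no inflection in the relevant sense, $\bar\Omega'<0$) to exclude eigenvalues with $\mathrm{Re}\,\lambda>0$. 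This gives $s(L_\phi|_{X_n})\le 0$.

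\textbf{Main obstacle.} The key difficulty is upgrading spectral information to the growth bound uniformly in $n$, since $L^2$ is not a Hilbert space in which $L_\phi$ is normal. On each $X_n$, $L_\phi = T_n + K_n$ where $K_n$ is compact: it is an integral operator with a Gaussian-decaying kernel, by Biot–Savart applied to $\bar\Omega'$. Hence the essential growth bound equals that of the unitary $T_n$, which is $0$. On a Banach space, $\omega_0 = \max(\omega_{ess}, s)$, so $\omega_0(L_\phi|_{X_n}) \le 0$ for every $n$. For uniformity in $n$, I would use the conserved energy–Casimir functional mode by mode. On $X_n$ it is equivalent to the $L^2$ norm with constants independent of $n$, possibly after restricting to $|n|$ large, where $K_n$ has norm $O(1/|n|)$ relative to the oscillation. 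This yields a uniform bound $\|e^{tL_\phi}|_{X_n}\|\le C_\varepsilon e^{\varepsilon t}$. If that fails, the fallback is a Gearhart–Prüss resolvent estimate on $\{\mathrm{Re}\,\lambda>\varepsilon\}$, obtained mode by mode from the Rayleigh ODE. That estimate would give $\omega_0\le 0$ directly in the Hilbert space $L^2$.
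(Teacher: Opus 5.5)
\paragraph{Gap: uniformity in $n$.} The problem is the passage from mode-by-mode information to $\omega_0(L_\phi)\le 0$. You invoke compactness only on each angular sector $X_n$. That gives $\omega_0(L_\phi|_{X_n})\le 0$ for each $n$, but with $n$-dependent constants, which says nothing about the semigroup on $\bigoplus_n X_n$. Neither proposed repair works as written:
\begin{itemize}
\item The energy--Casimir form is weighted by $|F'(\bar\Psi)|^{-1}$, which grows like $e^{r^2/4}$ on \emph{every} mode, as you note yourself a few lines earlier. So it cannot be equivalent to the $L^2$ norm with $n$-independent constants.
\item The Gearhart--Pr\"uss fallback needs exactly the uniform resolvent bound on $\{\mathrm{Re}\,\lambda>\varepsilon\}$ that is missing.
\end{itemize}
The paper avoids this obstacle by applying compactness on the whole space. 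In vorticity form, $L_\phi\omega=-\bar u\cdot\nabla\omega-u\cdot\nabla\bar\omega$ is skew-adjoint transport (a unitary group on $L^2(\mathbb{R}^2)$) plus $K\omega=-(K_{BS}\ast\omega)\cdot\nabla\bar\omega$. This $K$ is compact on all of $L^2$, since it gains a derivative and $\nabla\bar\omega$ is Gaussian. On a Hilbert space a compact perturbation does not change the essential growth bound, so $\omega_{\mathrm{ess}}(L_\phi)=0$. Then $\omega_0=\max\{\omega_{\mathrm{ess}},s\}$ reduces everything to excluding isolated eigenvalues with $\mathrm{Re}\,\lambda>0$. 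Because $L_\phi$ commutes with rotations, the angular components of such an eigenfunction are again eigenfunctions. The exclusion is therefore a purely mode-by-mode question, and no uniformity in $n$ is ever needed.

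Your parenthetical claim $\|K_n\|=O(1/|n|)$ is in fact correct in vorticity $L^2$; it follows from the Green's function of $\Delta_n$ and a Schur test. It is exactly what makes $K=\bigoplus_n K_n$ compact. Used directly, it would also give $\|e^{tL_\phi}|_{X_n}\|\le e^{t\|K_n\|}$ for large $|n|$. But you tied it to the false energy--Casimir equivalence instead of using it.

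\paragraph{Gap: velocity versus vorticity.} You pose the problem for velocities in $L^2_\sigma$, yet every structural fact you use for the growth bound (Gaussian kernel, compactness, decay of $\|K_n\|$) holds only in the vorticity formulation. In velocity form, $KU=-\mathbb{P}(U\cdot\nabla\bar U)$ is of order zero. On a divergence-free wave with polarization $\eta\perp\xi$, its principal symbol is $\eta\cdot(\eta\cdot\nabla)\bar U=\eta_r\eta_\theta\,r\,\Omega_0'(r)$, which is nonzero for oblique wave vectors. Hence $K$ is not compact on $L^2_\sigma$, and $\|K_n\|$ does not tend to zero. A growth bound for the vorticity semigroup on $L^2$ is a bound for velocities in $\dot H^1$, not in $L^2_\sigma$. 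You should therefore run the whole argument in vorticity form, as the paper does. There the lower bound $s\ge 0$ is immediate, because radial vorticity perturbations lie in the kernel.

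\paragraph{The eigenvalue exclusion.} This step carries the real content, and your proposal only names it; the paper carries it out after the substitution $r=e^s$. In your notation, a mode-$n$ eigenfunction with $n\ne 0$ satisfies $\Delta_n\psi=\frac{in\bar\Omega'}{r(\lambda+in\Omega_0)}\psi$. Multiply by $r\bar\psi$ and integrate. The left side is real, while the imaginary part of the right side is $n\,\mathrm{Re}\,\lambda\int_0^\infty\frac{\bar\Omega'|\psi|^2}{|\lambda+in\Omega_0|^2}\,dr$. This has a strict sign since $\bar\Omega'<0$, which forces $\psi=0$. Your ``Rayleigh-type'' description points at exactly this argument, so that step is sound in outline.
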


Here $s(L,X)$ denotes the spectral bound of a closed, densely defined operator 
$L \colon D(L) \subset X \to X$ and $\omega_0(L,X)$ denotes the growth bound of the strongly continuous semigroup generated by $L$.

While Theorem \ref{OSEENSTABLE} rules out the singular limit argument of \cite{ABC} for non-uniqueness from the simple data $1/|x|$ in two-dimensions, the question of uniqueness from this data proposed e.g. in \cite{KT}, remains open.

In the course of proving the stability of the Oseen vortex, we prove the (to our best knowledge) first general stability criterion for the Euler equations on the whole plane:

\begin{theorem} Suppose $u(r)$ is a smooth function on $[0,\infty)$ that vanishes at $r=0$ and satisfies $u^{(k)}(r) = \mathcal{O}(1/r^{k})$ as $r\to \infty$ for all $k\geq 0$.
Suppose $\overline{u}(r,\theta) = u(r)e_\theta$ is the velocity field of a radial vortex on the plane with vorticity $\overline{\omega}(r)$. The spectrum of the linearization of the Euler equations in vorticity form around $\overline{\omega}$ as an operator on $L^2(\mathbb{R}^2)$ is contained in the imaginary axis if either of the following is true:
$$\overline{\omega}'(r)>0 \quad \textrm{ or } \quad \overline{\omega}'(r)<0.$$
\end{theorem}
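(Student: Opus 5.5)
The strategy is to linearize in vorticity form, use the radial structure to reduce to a family of one-dimensional operators indexed by the angular Fourier mode, and then run a Rayleigh-type argument in each mode.

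\textbf{Setup and mode reduction.} Write the linearized operator on $L^2(\mathbb{R}^2)$ as
$$\mathcal{L}\omega = -\frac{u(r)}{r}\partial_\theta \omega - (\nabla^\perp\psi)\cdot\nabla\overline{\omega},\qquad \Delta\psi=\omega,$$
where $\nabla^\perp = (-\partial_y,\partial_x)$, so that the second term becomes $-\frac{1}{r}\partial_\theta\psi\,\overline{\omega}'(r)$ up to sign. Decompose $\omega=\sum_n \omega_n(r)e^{in\theta}$; $\mathcal{L}$ commutes with rotations, so it splits as a direct sum $\mathcal{L}=\bigoplus_n \mathcal{L}_n$ on $L^2(r\,dr)$. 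For $n=0$ we have $\mathcal{L}_0=0$. For $n\neq 0$,
$$\mathcal{L}_n = -in\Big(\Omega(r)\,\omega_n - \frac{\overline{\omega}'(r)}{r}\,\psi_n\Big),\qquad \Omega=\frac{u}{r},$$
where $\psi_n = \Delta_n^{-1}\omega_n$ with $\Delta_n=\partial_r^2+\frac1r\partial_r-\frac{n^2}{r^2}$.

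\textbf{Compactness and resolvent control.} The decay hypotheses on $u$ give $\Omega$ bounded with $\Omega\to0$, and $\overline{\omega}'/r = O(r^{-4})$. Hence the multiplication by $\overline{\omega}'/r$ composed with $\Delta_n^{-1}$ is a compact operator on $L^2(r\,dr)$. By Weyl, $\sigma_{ess}(\mathcal{L}_n)$ is the range of $-in\Omega$, which lies on the imaginary axis. So any non-imaginary spectrum of $\mathcal{L}_n$ consists of isolated eigenvalues. Since $\mathcal{L}_n$ has real coefficients apart from the factor $i$, the spectrum is symmetric under $\lambda\mapsto-\bar\lambda$. It therefore suffices to exclude eigenvalues $\lambda=-inc$ with $c=c_r+ic_i$ and $c_i\neq 0$.

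\textbf{Rayleigh-type identity.} For such an eigenvalue the equation becomes
$$(\Omega-c)\,\Delta_n\psi = \frac{\overline{\omega}'}{r}\,\psi .$$
Since $c_i\neq0$ we may divide by $\Omega-c$, giving $\Delta_n\psi = \frac{\overline{\omega}'}{r(\Omega-c)}\psi$. Multiply by $\bar\psi\, r$ and integrate:
$$-\int\Big(|\psi'|^2+\frac{n^2}{r^2}|\psi|^2\Big)r\,dr=\int \frac{\overline{\omega}'\,(\Omega-\bar c)}{|\Omega-c|^2}|\psi|^2\,dr .$$
The boundary terms must vanish, which needs care. Near $0$, $\psi_n\sim r^{|n|}$. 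At infinity, $\psi_n$ decays like $r^{-|n|}$; this follows because $\omega_n\in L^2$ has tail $O(r^{-4})|\psi|$ and the Green's function of $\Delta_n$ controls it. Taking the imaginary part yields
$$c_i\int\frac{\overline{\omega}'}{|\Omega-c|^2}|\psi|^2\,dr=0 .$$
Under either sign condition on $\overline{\omega}'$ the integrand has a fixed sign. Hence $\psi\equiv0$, so $\omega_n=0$, a contradiction.

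\textbf{Assembling the pieces.} Uniform-in-$n$ resolvent bounds are needed to pass from each $\mathcal{L}_n$ to the full direct sum. I would obtain them by noting that the compact perturbation has norm $O(1/|n|)$ uniformly, since $\Delta_n^{-1}$ gains $1/n^2$. Then for $\mathrm{Re}\,\lambda\neq0$ the resolvents $(\lambda-\mathcal{L}_n)^{-1}$ are bounded uniformly for large $|n|$. Only finitely many modes remain, and each has no spectrum off the axis by the previous step.

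\textbf{Main obstacle.} I expect the hardest step to be justifying the integration by parts and the decay of the eigenfunction at $r=\infty$. I also expect difficulty in the large-$n$ uniformity. There, the worry is that $\Omega$ may fail to be monotone, so the essential-spectrum boundary must be handled through the $O(1/n)$ smallness rather than through any monotonicity of $\Omega$.
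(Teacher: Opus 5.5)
Your argument is the paper's argument. The paper also treats $-u\cdot\nabla\overline{\omega}$ as a compact perturbation of the transport part, splits into angular modes, and excludes eigenvalues by the same energy identity. It only writes that identity in $s=\log r$ with $F=\psi/r$: its potential is $P_{\lambda,k}=r\overline{\omega}'/(\Omega-c)$ for $\lambda=-ikc$, and multiplying by $e^{2s}\overline{F}$ gives exactly your $\int(|\psi'|^2+n^2|\psi|^2/r^2)\,r\,dr=-\int\frac{\overline{\omega}'}{\Omega-c}|\psi|^2\,dr$.

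Where you differ from the paper, you improve on it. First, the paper discards $k=\pm1$ by requiring $F\in W^{2,2}(\mathbb{R})$. That is unjustified there, because $F=\psi/r$ tends to a nonzero constant as $s\to-\infty$ (e.g. the translation mode $\partial_x\overline{\omega}$). Your identity, with $\psi\sim r$ at $0$ and $\psi\sim r^{-1}$ at $\infty$, covers these eigenfunctions directly. Second, your large-$|n|$ resolvent bound is correct for $|n|\ge2$. In $s=\log r$ one has $\|\psi_n/r^2\|_{L^2(r\,dr)}\le (n^2-1)^{-1}\|\omega_n\|$, and $r\overline{\omega}'$ is bounded, so the perturbation is $O(1/|n|)$. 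A small correction: the hypotheses give only $\overline{\omega}'/r=O(r^{-3})$, not $O(r^{-4})$, but this still suffices.

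There is, however, a genuine gap, which the paper shares: the compactness/Weyl step fails for $|n|=1$. There $\frac{\overline{\omega}'}{r}\Delta_1^{-1}$ is not even bounded on $L^2(r\,dr)$. Take $\omega^{(R)}=\frac{1}{r\log R}\mathbf{1}_{[1,R]}$. Then $\|\omega^{(R)}\|^2=1/\log R\to0$, yet $\psi^{(R)}=\Delta_1^{-1}\omega^{(R)}=-r/2$ on $(0,1)$ for every $R$, so $\frac{\overline{\omega}'}{r}\psi^{(R)}=-\frac12\overline{\omega}'$ there. This is the familiar fact that planar $L^2$ vorticity does not control the velocity at the origin. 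One also checks that $\mathcal{L}_1\omega^{(R)}\to-\frac{i}{2}\overline{\omega}'\neq0$ while $\omega^{(R)}\to0$, so $\mathcal{L}_{\pm1}$ is not even closable on $L^2$.

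Consequently, in the $n=\pm1$ sectors your Rayleigh identity rules out eigenvalues but says nothing about the rest of the spectrum. Neither your argument nor the paper's yields the stated conclusion on $L^2(\mathbb{R}^2)$ in those modes. To close the gap you must change the functional setting for these modes, for instance work in $L^2(\langle x\rangle^{2m}dx)$ with $1<m<2$. The weight is radial, so $-\overline{u}\cdot\nabla$ stays skew-adjoint. Biot--Savart becomes bounded, so the nonlocal term is compact. Your identity then applies verbatim to eigenfunctions.
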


\subsection*{Acknowledgments}
	We thank Princeton University for its support. 
    MPC also thanks the Simons Foundation and New York University for their support.
	TB acknowledges the support of the DFG Walter-Benjamin Fellowship no. 538212014.

\section{An Overview of the Jia-Šverák program}

In this section, we rigorously justify the motivation presented in the introduction by proving Theorem \ref{MAIN}. Our argument follows the outline of \cite{JS1} and \cite{ABC}. For convenience of the reader we recall the main steps and point out which assumptions are necessary to carry it out. Furthermore, we show that parts of the proof follow directly from abstract semigroup theory, which may give some new insights into the Jia-Šverák program.

\medskip

The main idea is to construct two distinct solutions to the incompressible Navier-Stokes equations with scaling-invariant initial data $u_0(x)=|x|^{-1}\cdot v(\tfrac{x}{|x|})$
by using an instability in self-similar coordinates. 
To this end, we set $\tau = \log(t)$ and $\xi = \frac{x}{\sqrt{t}}$ and define 
    \begin{equation}
        u(t,x) = \frac{1}{\sqrt{t}} U(\tau,\xi) . \label{eq:self-similar}
    \end{equation}
    One of the solutions is the forward self-similar solution $\bar{u}(x,t)$ given via \eqref{eq:self-similar} from a smooth, bounded, exact self-similar profile $\bar{U}(\xi)$. The other solution is $u(x,t)$ which corresponds to $U(\xi,\tau)$ via \eqref{eq:self-similar}, where $U(\xi,\tau)$ will be constructed as
    \begin{equation*}
        U = \bar{U} + U_{\mathrm{lin}} + U_{\mathrm{per}} .
    \end{equation*}
Let us now explain this construction in more detail: We consider the following Banach space of divergence-free vector fields, $X = \mathrm{L}^2_{\textrm{df}}(\mathbb{R}^3) \cap \mathrm{L}^4_{\textrm{df}}(\mathbb{R}^3)$, and the linearization at $\bar{U}$ of the self-similar Navier-Stokes equations 
\begin{equation*}
    L_{ss} V = \Delta V + \frac{1}{2} \xi \cdot \nabla V + \frac{1}{2} V - \mathbb{P}(\bar{U} \cdot \nabla V + V \cdot \nabla \bar{U}) , \qquad D(L_{ss}) = \{ V \in X \colon L_{ss} V \in X \} , 
\end{equation*}
where $\mathbb{P}$ denotes the Leray projection.  
We first collect some functional-analytic properties of the operator $L_{ss}$. The following is a generalization of \cite[Proposition 4.3 \& Lemma 4.4]{ABC}.
%

\begin{proposition}\label{prop:Lss} Let $q\in (1,\infty)$.
    The operator $L_{ss}$ generates a $C_0$-semigroup $(T_{L_{ss}}(\tau))_{\tau \in \mathbb{R}_+}$ on the space $\mathrm{L}^q_{\textrm{df}}(\mathbb{R}^d)$
    satisfying parabolic estimates
    \begin{equation*}
        \| T_{L_{ss}}(\tau) U_0 \|_{\mathrm{W}^{s_2,q}(\mathbb{R}^d)}
        \leq M \cdot \tau^{-\frac{s_2-s_1}{2} - \frac{d}{2} \left( \frac{1}{p}-\frac{1}{q} \right) } \cdot e^{(\omega_0+\delta) \tau} \cdot 
        \| U_0 \|_{\mathrm{W}^{s_1,p}(\mathbb{R}^d)}
    \end{equation*}
    for some $M$ depending on arbitrary $0 \leq s_1 \leq s_2$, $1 < p \leq q < \infty$ and $\delta > 0$, where $\omega_0 \in \mathbb{R}$ denotes the growth bound of $L_{ss}$ on $\mathrm{L}^q(\mathbb{R}^d)$. 
\end{proposition}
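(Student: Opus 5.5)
We treat $L_{ss}$ as a perturbation of the operator $A_0 V = \Delta V + \frac12 \xi\cdot\nabla V + \frac12 V$. The plan is to first build the semigroup for $A_0$ explicitly, then add the lower-order term $B V = -\mathbb{P}(\bar U\cdot\nabla V + V\cdot\nabla\bar U)$ by perturbation theory, and finally read off the smoothing estimates.

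\textbf{The explicit semigroup for $A_0$.} Since $A_0$ is the Ornstein--Uhlenbeck type operator coming from the change of variables $\xi = x/\sqrt t$, $\tau=\log t$, its semigroup is explicit. Writing $U(\tau,\xi) = e^{\tau/2}\, u(e^{\tau}, e^{\tau/2}\xi)$ and solving the heat equation for $u$ on the time interval $[1,e^\tau]$ gives
\begin{equation*}
T_{A_0}(\tau) U_0(\xi) = e^{\tau/2}\,\big(G_{e^\tau-1} * U_0\big)(e^{\tau/2}\xi),
\end{equation*}
where $G_s$ is the heat kernel. This operator commutes with $\mathbb{P}$, so it preserves $\mathrm{L}^q_{df}$. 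From it we get, by hand:
\begin{itemize}
\item strong continuity on $\mathrm{L}^q$;
\item the bound $\|T_{A_0}(\tau)\|_{\mathrm{L}^p\to \mathrm{W}^{k,q}} \lesssim e^{c\tau}\,(1-e^{-\tau})^{-\frac{k}{2}-\frac d2(\frac1p-\frac1q)}$.
\end{itemize}
For small $\tau$ the factor $(1-e^{-\tau})$ behaves like $\tau$, which yields exactly the stated power of $\tau$. Fractional orders $s_1\le s_2$ then follow by interpolation, and by commuting derivatives with $T_{A_0}$, since $\partial_j T_{A_0}(\tau) = e^{\tau/2} T_{A_0}(\tau)\partial_j$ up to scaling.

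\textbf{Adding the perturbation $B$.} Here $\bar U$ is smooth and bounded with bounded derivatives. The term $V\cdot\nabla\bar U$ is bounded on $\mathrm{L}^q$. The term $\bar U\cdot\nabla V$ is first order, but it is relatively bounded with respect to $A_0$ through the smoothing: $\|\nabla T_{A_0}(\tau)\|_{\mathrm{L}^q\to \mathrm{L}^q}\lesssim \tau^{-1/2}$, which is integrable at zero. So $B$ is a Miyadera--Voigt perturbation, or equivalently we solve the Duhamel equation
\begin{equation*}
T(\tau)=T_{A_0}(\tau)+\int_0^\tau T_{A_0}(\tau-s)\,B\,T(s)\,ds
\end{equation*}
by a Picard iteration on short time intervals, using $\mathbb{P}$ bounded on $\mathrm{L}^q$ for $1<q<\infty$. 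This produces a $C_0$-semigroup generated by $A_0+B$. One checks that its generator coincides with $L_{ss}$ on the maximal domain stated: the maximal domain is contained in the generator's domain by a resolvent argument for large $\lambda$. The same iteration, run in the weighted norms $\sup_{0<\tau\le 1} \tau^{\gamma}\|\cdot\|_{\mathrm{W}^{s_2,q}}$, propagates the short-time parabolic estimates from $T_{A_0}$ to $T_{L_{ss}}$, with $\tau\le 1$. The Beta-function integrals $\int_0^\tau (\tau-s)^{-1/2} s^{-\gamma}\,ds$ close because $\gamma<1$ for suitable splitting of exponents; otherwise one iterates the $\mathrm{L}^p\to\mathrm{L}^q$ gain in finitely many steps.

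\textbf{Long-time growth.} For $\tau\ge1$, write $T(\tau)=T(1)\,T(\tau-1)$. The factor $T(\tau-1)$ acts on $\mathrm{L}^q$ with norm at most $M_\delta e^{(\omega_0+\delta)(\tau-1)}$ by definition of the growth bound. The factor $T(1)$ supplies the smoothing, and an extra $T(1)$ on the right handles the initial $\mathrm{L}^p\to \mathrm{L}^q$ gain. The main obstacle I expect is the bookkeeping at $p<q$ with derivatives, since the Duhamel singularities then combine and can exceed order one. To handle this, I would split the target gain into small increments, each with singularity exponent below one, and compose via the semigroup property.
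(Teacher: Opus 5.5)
Your proposal is correct and follows essentially the same route as the paper. Both proofs get the Ornstein--Uhlenbeck estimates by undoing the self-similar variables, treat the first-order term as a Miyadera--Voigt perturbation using the integrable $\tau^{-1/2}$ smoothing, run a Duhamel argument for short times with a bootstrap (your splitting into small gains) when the singularity exceeds order one, and use the semigroup property for long times. The main variation is that you put the explicit semigroup $T_{A_0}$ outside the Duhamel integral, close a fixed point in weighted $\mathrm{W}^{s_2,q}$ norms, and add an extra $T(1)$ for the $\mathrm{L}^p\to\mathrm{L}^q$ gain. The paper instead puts $T_{L_{ss}}(\tau-s)$ outside and tacitly assumes it is bounded on $\mathrm{W}^{s_2,q}$, which your version actually proves.
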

\begin{proof}
We denote the main part of the operator by 
\begin{align*}
    \mathcal{D} V := \Delta V + \frac{1}{2} \xi \cdot \nabla V + \frac{1}{2} V , \qquad D(\mathcal{D}) = \{ V \in \mathrm{W}^{2,q}_{\textrm{df}}(\mathbb{R}^3) \colon \mathcal{D} V \in \mathrm{L}^q(\mathbb{R}^3) \} 
\end{align*}
and the perturbations by
\begin{equation*}
    Q V := - \mathbb{P}(\bar{U} \cdot \nabla V) - \mathbb{P}(V \cdot \nabla \bar{U}) .
\end{equation*}
It is well-known that $\mathcal{D}$ generates a quasi-dissipative $C_0$-semigroup $(T_0(\tau))_{\tau \geq 0}$ satisfying the desired estimates with growth bound $\omega_0(\mathcal{D})= \frac{1}{2}-\frac{d}{2q}$. Indeed, the semigroup generation was proven in \cite{M}, and the bounds follow easily by undoing the self-similar coordinates and using the corresponding estimates for the heat semigroup on $\mathrm{L}^q(\mathbb{R}^d)$. 

\medskip 

Furthermore, as in \cite[(2.6)]{JS1} we see $D(\mathcal{D}) \subset \mathrm{W}^{1,q}(\mathbb{R}^d)$. 
Since $\bar{U},\nabla \bar{U} \in \mathrm{L}^\infty(\mathbb{R}^d)$ we obtain that $Q$ is relatively $\mathcal{D}$-bounded. 
Using the parabolic estimates of $(T_0(\tau))_{\tau \geq 0}$ with growth bound $\omega_0(\mathcal{D}) = -\frac{d}{2q}+\frac{1}{2}$ on $\mathrm{L}^q(\mathbb{R}^d)$, we obtain 
	\begin{align*}
		\int_0^T \| Q T_0(\tau) V \|_X^p \mathrm{d} \tau 
		&=
		\int_0^T \| Q T_0(\tau) V \|_{\mathrm{L}^q(\mathbb{R}^d)}^p \mathrm{d} \tau \\
		&\leq 
		\int_0^T \| T_0(\tau) V \|_{\mathrm{W}^{1,q}(\mathbb{R}^d)}^p \mathrm{d} \tau \\
		&\leq 
		\int_0^T \frac{C}{\tau^{\frac{p}{2}}} \cdot e^{p (-\frac{d}{2q}+\frac{1}{2} + \delta) \tau}  \mathrm{d} \tau \cdot 
		\| V \|_{\mathrm{L}^q(\mathbb{R}^d)}^p \\
		&= 
		\tilde{C} \cdot 
		\| V \|_{\mathrm{L}^q(\mathbb{R}^d)}^p 
	\end{align*}
	for $p \in (1,2)$. Hence, $Q$ is a Miyadera-Voigt perturbation of $\mathcal{D}$ and the generation of a $C_0$-semigroup on $\mathrm{L}^q(\mathbb{R}^d)$ follows from \cite[Theorem 18]{ABE14}.

\medskip 

It remains to establish the parabolic bounds. Note that the polynomial part describes the short term behaviour of the semigroup whereas the exponential term captures its long term behaviour. Therefore, we can split the problem into two parts. 

\medskip

For $\tau \in (0,2)$:
We obtain the Duhamel formula
\begin{equation}
    T_{L_{ss}}(\tau) U_0 = 
    T_0(\tau) U_0 + \int_0^{\tau} T_{L_{ss}}(\tau-s) Q T_0(s) U_0 \, \mathrm{d} s. 
    \label{eq:duhamel}
\end{equation}
Since $\bar{U}$ is smooth and bounded and all its derivatives are bounded, we obtain that
\begin{equation*}
    \| Q T_0(s) U_0 \|_{\mathrm{W}^{s_2,q}(\mathbb{R}^d)}
    \leq \| T_0(s) U_0 \|_{\mathrm{W}^{s_2+1,q}(\mathbb{R}^d)}
    \leq C s^{-\frac{s_2-s_1}{2} - \frac{d}{2} \left( \frac{1}{p}-\frac{1}{q} \right) - \frac{1}{2}} \| U_0 \|_{\mathrm{W}^{s_1,p}(\mathbb{R}^d)} .
\end{equation*}
Using the boundedness of the operators $T_{L_{ss}}(\tau-s)$ we obtain
\begin{align*}
    \left\|
    \int_0^{\tau} T_{L_{ss}}(\tau-s) Q T_0(s) U_0 \, \mathrm{d} s
    \right\| 
    &\leq C \int_0^\tau s^{-\frac{s_2-s_1}{2} - \frac{d}{2} \left( \frac{1}{p}-\frac{1}{q} \right) - \frac{1}{2} } \| U_0 \|_{\mathrm{W}^{s_1,p}(\mathbb{R}^d)} \, \mathrm{d} s \\
    &\leq C \cdot \tau^{-\frac{s_2-s_1}{2} - \frac{d}{2} \left( \frac{1}{p}-\frac{1}{q} \right) + \frac{1}{2} } \| U_0 \|_{\mathrm{W}^{s_1,p}(\mathbb{R}^d)} ,
\end{align*}
which is an even better estimate than the one desired. Note that the integration above suffices for $|s_2-s_1|\ll 1$, and a simple bootstrapping argument yields the estimate for $|s_2-s_1|\gg 1$. On the other hand, the first term on the right-hand side of Equation \eqref{eq:duhamel} satisfies the parabolic estimates we need, whence $T_{L_{ss}}(\tau)$ does for small $\tau \in (0,2]$.

\medskip 

For $\tau \in [2,\infty)$: We obtain by the semigroup property:
\begin{align*}
    \| T_{L_{ss}}(\tau) U_0 \|_{\mathrm{W}^{s,q}(\mathbb{R}^d)}
    &\leq \| T_{L_{ss}}(1) \|_{\mathcal{L}(\mathrm{L}^q(\mathbb{R}^d),\mathrm{W}^{s,q}(\mathbb{R}^d))} \cdot \| T_{L_{ss}}(\tau-1) U_0 \|_{\mathrm{L}^q(\mathbb{R}^d)} \\
    &\leq C \cdot \| T_{L_{ss}}(\tau-1) U_0 \|_{\mathrm{L}^q(\mathbb{R}^d)}
    \leq \tilde{C} \cdot e^{(\omega_0+\delta) \tau} \cdot \| U_0 \|_{\mathrm{L}^q(\mathbb{R}^d)} .
\end{align*}
Combining the inequalities implies the claim. 
\end{proof}

Now, $U_{\mathrm{lin}}$ is defined to be an ancient solution of 
\begin{equation*}
    \partial_{\tau} U_{\mathrm{lin}}(\tau) = L_{ss} U_\mathrm{lin}(\tau) , \qquad \tau \in \mathbb{R} 
\end{equation*}
with $U_{\mathrm{lin}}(\tau) \to 0$ as $\tau \to - \infty$.
For $U$ to satisfy the self-similar Navier–Stokes equations, the part $U_{\mathrm{per}}$ must satisfy
    \begin{equation*}
        \partial_\tau U_{\mathrm{per}} - L_{ss}U_{\mathrm{per}}
        = \mathbb{P}( U_{\mathrm{per}} \cdot \nabla U_{\mathrm{per}} + U_{\mathrm{lin}} \cdot \nabla U_{\mathrm{per}} + U_{\mathrm{per}} \cdot \nabla U_{\mathrm{lin}} + U_{\mathrm{lin}} \cdot \nabla U_{\mathrm{lin}} ) .
    \end{equation*}
The mild formulation of these equations reads as:
    \begin{equation}
        U_{\mathrm{per}}(\tau) 
        =  \int_{-\infty}^\tau T_{L_{ss}}(\tau - s) \mathbb{P}( U_{\mathrm{per}} \cdot \nabla U_{\mathrm{per}} + U_{\mathrm{lin}} \cdot \nabla U_{\mathrm{per}} + U_{\mathrm{per}} \cdot \nabla U_{\mathrm{lin}} + U_{\mathrm{lin}} \cdot \nabla U_{\mathrm{lin}} ) \,\mathrm{d} s 
        \label{eq:Uper}
    \end{equation}
Following \cite{ABC} we introduce the Banach space (for some numbers $T$ and $N$ to be determined)
\begin{equation*}
    Z := \{ U \in \mathrm{C}(-\infty,T;\mathrm{H}^N(\mathbb{R}^3)) \colon \sup_{\tau < T} e^{-(\omega_0+\delta)\tau} \| U(\tau) \|_{\mathrm{H}^N(\mathbb{R}^3)} \text{ and } \textrm{div } U = 0 \} .
\end{equation*}
The next result shows the existence of $U_{\mathrm{per}}$ assuming the existence of an ancient solution $U_{\mathrm{lin}}$. 

\begin{proposition}\label{prop:Uper}
    Given an ancient solution $U_{\mathrm{lin}}$, there exists $T \in \mathbb{R}$ such that there exists $U_{\mathrm{per}} \in Z$ satisfying \eqref{eq:Uper}. If $\omega_0 > 0$ then $U_{\mathrm{per}}(\tau) \to 0$ as $\tau \to - \infty$. Furthermore, if $U_{\mathrm{lin}} \not = 0$ then $U_{\mathrm{per}} \not = - U_{\mathrm{lin}}$ and hence $U \not = \bar{U}$.
\end{proposition}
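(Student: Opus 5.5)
We treat \eqref{eq:Uper} as a fixed point problem $U_{\mathrm{per}} = \Phi(U_{\mathrm{per}})$ on a small closed ball of $Z$. As in \cite{ABC}, we take $U_{\mathrm{lin}}(\tau) = \mathrm{Re}(e^{\lambda\tau}\eta)$, where $\lambda$ is an unstable eigenvalue of $L_{ss}$ with $\mathrm{Re}\,\lambda = a$ maximal and $\eta$ a smooth eigenfunction. Then $\|U_{\mathrm{lin}}(\tau)\|_{\mathrm{H}^N}\lesssim e^{a\tau}$. We choose the weight in $Z$ so that $\omega_0+\delta$ is replaced by $a$, or more precisely so that $a \le \omega_0$ and $2a > \omega_0+\delta$; the norm is then $\sup_{\tau<T} e^{-2a\tau}\|U(\tau)\|_{\mathrm{H}^N}$. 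The forcing term $U_{\mathrm{lin}}\cdot\nabla U_{\mathrm{lin}}$ then decays like $e^{2a\tau}$.

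The key estimate uses Proposition~\ref{prop:Lss} with $s_1 = N-1$, $s_2 = N$, $p=q=2$, together with the fact that $\mathbb{P}$ is bounded on $\mathrm{H}^{N-1}$. Since $\mathrm{H}^N$ is an algebra for $N\ge 2$, we have $\|\mathbb{P}(V\cdot\nabla W)\|_{\mathrm{H}^{N-1}}\lesssim \|V\|_{\mathrm{H}^N}\|W\|_{\mathrm{H}^N}$. This gives
\begin{equation*}
\Big\|\int_{-\infty}^\tau T_{L_{ss}}(\tau-s)F(s)\,\mathrm{d}s\Big\|_{\mathrm{H}^N}\lesssim \int_{-\infty}^\tau (\tau-s)^{-1/2}e^{(\omega_0+\delta)(\tau-s)}e^{2as}\,\mathrm{d}s\;\lesssim\; e^{2a\tau},
\end{equation*}
and the integral converges because $2a>\omega_0+\delta$. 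We use this to bound the four terms. The terms $U_{\mathrm{per}}\cdot\nabla U_{\mathrm{per}}$ and $U_{\mathrm{lin}}\cdot\nabla U_{\mathrm{per}}$ (with its symmetric counterpart) carry factors $e^{4a s}$ and $e^{3a s}$ respectively, which gain $e^{2aT}$ or $e^{aT}$ relative to the weight. Choosing $T$ very negative makes $\Phi$ a contraction on a ball of radius $C_0$, where $C_0$ is the constant from the $U_{\mathrm{lin}}\cdot\nabla U_{\mathrm{lin}}$ term. The difference estimates are identical because $\Phi$ is bilinear. The fixed point gives $\|U_{\mathrm{per}}(\tau)\|_{\mathrm{H}^N}\lesssim e^{2a\tau}\to 0$ as $\tau\to-\infty$. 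With the weight $e^{(\omega_0+\delta)\tau}$ as written, the same conclusion follows whenever $\omega_0>0$.

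It remains to show that $U\ne\bar U$, which is the same as $U_{\mathrm{per}}\neq -U_{\mathrm{lin}}$. We compare decay rates. We have $\|U_{\mathrm{per}}(\tau)\|\le Ce^{2a\tau}$, whereas $\|U_{\mathrm{lin}}(\tau)\|\ge c\,e^{a\tau}$ along a sequence $\tau\to-\infty$. If $\lambda$ is real this holds for all $\tau$. If $\lambda$ is complex, $\mathrm{Re}(e^{i\,\mathrm{Im}\lambda\,\tau}\eta)$ is nonzero on a periodic sequence, since $\eta$ and $\bar\eta$ are linearly independent. Since $e^{2a\tau}\ll e^{a\tau}$ as $\tau\to-\infty$, equality $U_{\mathrm{per}}=-U_{\mathrm{lin}}$ is impossible.

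The main obstacle is reconciling the growth exponents. The contraction requires the semigroup growth rate $\omega_0+\delta$ to be strictly less than $2a$. This is where one selects $\lambda$ to be the eigenvalue of maximal real part, so that $a$ is close to $\omega_0$; by the spectral mapping argument in \cite{ABC}, $\omega_0$ equals the spectral bound here, since the essential part has growth bound $\tfrac12-\tfrac{3}{4}<0$ on $\mathrm{L}^2$. One then takes $\delta$ small with $\omega_0+\delta<2a$, and the inequality $a>0$ makes this possible.
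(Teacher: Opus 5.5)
Your proposal is correct. For the existence part it matches the paper, which only invokes Banach's fixed point theorem together with Proposition~\ref{prop:Lss} ``as in Albritton--Brue--Colombo, Section 4''. You carry that argument out explicitly, using the weight $e^{-2a\tau}$ instead of $e^{-(\omega_0+\delta)\tau}$. Since $2a>\omega_0+\delta$, your space embeds in $Z$, so the statement as written is recovered. Your computation also exposes a hypothesis the paper leaves implicit. With the bounds of Proposition~\ref{prop:Lss}, the forcing $U_{\mathrm{lin}}\cdot\nabla U_{\mathrm{lin}}$ is controlled only if $\|U_{\mathrm{lin}}(s)\|_{\mathrm{H}^N}^2$ decays faster than $e^{(\omega_0+\delta)s}$. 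So ``given an ancient solution'' must mean one with enough decay, and your maximal-eigenvalue choice guarantees this.

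The genuine difference is in the last claim. You separate decay rates: $\|U_{\mathrm{per}}(\tau)\|\lesssim e^{2a\tau}$, while $\|U_{\mathrm{lin}}(\tau)\|\gtrsim e^{a\tau}$ along a sequence. The paper instead notes that the forcing in \eqref{eq:Uper} equals $\mathbb{P}\bigl((U_{\mathrm{per}}+U_{\mathrm{lin}})\cdot\nabla(U_{\mathrm{per}}+U_{\mathrm{lin}})\bigr)$. So $U_{\mathrm{per}}=-U_{\mathrm{lin}}$ would make the right-hand side of \eqref{eq:Uper} vanish, giving $U_{\mathrm{per}}=0$ and hence $U_{\mathrm{lin}}=0$.

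The paper's argument is one line. It needs no lower bound on $U_{\mathrm{lin}}$, no gap between rates, and no knowledge of which mode generated $U_{\mathrm{lin}}$. It applies verbatim to, e.g., superpositions of several unstable modes, where your sequence argument would need extra care. Yours requires the specific form $\mathrm{Re}(e^{\lambda\tau}\eta)$ and the identification $\omega_0=a$ through the essential growth bound. In return it gives more: $U-\bar U=U_{\mathrm{lin}}+O(e^{2a\tau})$, so the new solution leaves $\bar U$ quantitatively along the unstable mode. Unlike an exact identity, that separation survives later perturbations of the solutions.

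Two minor points. First, $\Phi$ is affine plus quadratic rather than bilinear; the difference estimates go through the underlying bilinear form. Second, the ball should have radius, say, $2C_0$ so that the small terms can be absorbed.
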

\begin{proof}
    The existence of $U_{\mathrm{per}}$ follows from Banach's fixed point theorem and Proposition~\ref{prop:Lss} as in \cite[Section 4]{ABC}.

    \smallskip

    It remains to show that $U \not = \bar{U}$, which is equivalent to $U_{\mathrm{lin}} \not = - U_{\mathrm{per}}$. Indeed, $U_{\mathrm{lin}} = - U_{\mathrm{per}}$ would imply 
    \begin{equation*}
        \mathbb{P}( U_{\mathrm{per}} \cdot \nabla U_{\mathrm{per}} + U_{\mathrm{lin}} \cdot \nabla U_{\mathrm{per}} + U_{\mathrm{per}} \cdot \nabla U_{\mathrm{lin}} + U_{\mathrm{lin}} \cdot \nabla U_{\mathrm{lin}} )
        = 0 
    \end{equation*}
    and hence
    $$
    -U_{\mathrm{lin}}
    =  
        U_{\mathrm{per}}(\tau)=$$
        \begin{equation*}
        = \int_{-\infty}^\tau T_{L_{ss}}(\tau - s) \mathbb{P}( U_{\mathrm{per}} \cdot \nabla U_{\mathrm{per}} + U_{\mathrm{lin}} \cdot \nabla U_{\mathrm{per}} + U_{\mathrm{per}} \cdot \nabla U_{\mathrm{lin}} + U_{\mathrm{lin}} \cdot \nabla U_{\mathrm{lin}} ) \,\mathrm{d} s  = 0 ,
    \end{equation*}
    which contradicts $U_{\mathrm{lin}} \not = 0$. 
\end{proof}

Motivated by this construction, the following question arises: how can one find an ancient solution $U_{\mathrm{lin}} \not = 0$ which decays $U_{\mathrm{lin}}(\tau) \to 0$ as $\tau \to - \infty$? Note that the existence of such a $U_{\mathrm{lin}}$ would follow, for example, from the existence of (any!) unstable spectrum under the assumption that the semigroup generated by $L_{ss}$ is hyperbolic, see \cite[Section VI.1.c]{EN}. An easier way to proceed is the existence of an unstable eigenvalue $\lambda_{\mathrm{uns}} \in \sigma_p(L_{ss}) \cap \{\mu \in \mathbb{C} \colon \Re \mu > 0\}$. Then the spectral mapping theorem of the point spectrum \cite[Corollary IV.3.8]{EN} guarantees that
\begin{equation*}
    U_{\mathrm{lin}} := \Re (e^{\lambda_{\mathrm{uns}} \tau} \eta) ,
\end{equation*}
is an ancient solution, where $\eta \not = 0$ is an eigenfunction of $L_{ss}$ associated to the eigenvalue $\lambda_{\mathrm{uns}}$.

\smallskip 

In \cite{ABC} Albritton, Brue and Colombo used the following idea to construct an unstable eigenvalue for the {\textit{forced}} Navier-Stokes equations:
Replacing $\bar{U}$ by $\beta \bar{U}$ for a positive number $\beta > 0$ one obtains for the linearization
\begin{align*}
    L_{\mathrm{vel},\beta} V &= \Delta V + \frac{1}{2} \xi \cdot \nabla V + \frac{1}{2} V - \beta \mathbb{P}(\bar{U} \cdot \nabla V + V \cdot \nabla \bar{U}) \\
    &= \beta \cdot ( \beta^{-1} \cdot( \Delta V + \frac{1}{2} \xi \cdot \nabla V + \frac{1}{2} V) - \mathbb{P}(\bar{U} \cdot \nabla V + V \cdot \nabla \bar{U})) 
    =: \beta \cdot \tilde{L}_{\mathrm{vel},\beta} 
\end{align*}
where both operators are defined on $X$ with their maximal domains. Note that 
\begin{equation*}
    \beta \lambda_\beta \in \sigma(L_{\mathrm{vel},\beta}) \quad \Longleftrightarrow \quad \lambda \in \sigma(\tilde{L}_{\mathrm{vel},\beta}) ,
\end{equation*}
and in particular $L_{\beta}$ admits an unstable eigenvalue if and only if $\tilde{L}_\beta$ does so. 
Since, in contrast to the work of Albritton, Brue and Colombo, we aim for an understanding of the unforced Navier-Stokes equations, we need that $\beta \bar{U}$ is a self-similar solution of the Navier-Stokes equations, i.e. $\bar{U}$ is necessarily a \emph{homothetic self-similar solution}.
Now the idea is to pass to the limit $\beta \to + \infty$.
Formally, applying this limit to $\tilde{L}_{\mathrm{vel},\beta}$, we obtain
\begin{equation*}
    \tilde{L}_{\mathrm{vel},\infty} V 
    := - \mathbb{P}(\bar{U} \cdot \nabla V + V \cdot \nabla \bar{U}),
    \qquad D(\tilde{L}_{\mathrm{vel},\infty}) := \{ V \in X \colon 
    \tilde{L}_{\mathrm{vel},\infty} V \in X \} 
\end{equation*}
i.e. we recover the linearization of the three-dimensional Euler equations at $\bar{U}$.
Since it is sometimes more convenient to argue in vorticity form rather than in velocity form, we introduce the space $Y = \mathrm{L}^2(\mathbb{R}^d) \cap \mathrm{L}^4(\mathbb{R}^d)$ and the operators
\begin{align*}
    \tilde{L}_{\mathrm{vor},\beta} \Omega &:= \beta^{-1} \cdot\biggl( \Delta \Omega + \frac{1}{2} \xi \cdot \nabla \Omega + \Omega\biggr) - [\bar{U},\Omega] - [U,\bar{\Omega}], \\
    L_{\mathrm{vor},\beta} \Omega &:= \beta \cdot \tilde{L}_{\mathrm{vor},\beta} \Omega, \\ D(L_{\mathrm{vor},\beta}) &:= D(\tilde{L}_{\mathrm{vor},\beta}) = \{ \Omega \in \mathrm{W}^{2,2}(\mathbb{R}^3) \cap \mathrm{W}^{2,4}(\mathbb{R}^3) \colon \xi \cdot \nabla \Omega \in Y \}, \\
    \tilde{L}_{\mathrm{vor},\infty} \Omega &:= - [\bar{U},\Omega] - [U,\bar{\Omega}] , \quad D(\tilde{L}_{\mathrm{vor},\infty}) = \{ \Omega \in Y \colon \bar{U}\cdot \nabla \Omega \in Y \} .
\end{align*}
The next result guarantees that we can switch freely between the velocity and vorticity formulation of the operators in the following sense. The proof requires a departure from \cite{ABC} because our profile cannot be compactly supported.

\begin{lemma}\label{lem:vor-vel} Assume that the background $\overline{U}$ is isotropically decaying. 
    Assume in addition that $\lambda \in \mathbb{C}$ with $\Re \lambda > \frac{1}{2}$. Then
    \begin{align*}
        \lambda \in \sigma_p(\tilde{L}_{\mathrm{vor},\beta}) \Longleftrightarrow \lambda \in  
        \sigma_p(\tilde{L}_{\mathrm{vel},\beta}), \qquad 
        \lambda \in \sigma_p(L_{\mathrm{vor},\beta}) \Longleftrightarrow \lambda \in  
        \sigma_p(L_{\mathrm{vel},\beta}).
    \end{align*}
\end{lemma}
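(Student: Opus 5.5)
Taking curl maps velocity eigenfunctions to vorticity eigenfunctions, and the Biot--Savart law maps back; the work is in showing both maps stay inside the spaces $X$ and $Y$ and are injective. The hypothesis $\Re\lambda>\tfrac12$ is what will give decay. Since $L_{\mathrm{vor},\beta}=\beta\tilde{L}_{\mathrm{vor},\beta}$ and similarly for the velocity operators, the second equivalence follows from the first with $\lambda$ replaced by $\lambda/\beta$ (which for $\beta\ge 1$ keeps $\Re>\tfrac12$). Alternatively, I will argue directly for general $\beta$, since the argument below does not depend on the size of the coefficient in front of the diffusive part.

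\emph{Velocity to vorticity.} Let $V\in D(\tilde{L}_{\mathrm{vel},\beta})$, $V\neq 0$, with $\tilde{L}_{\mathrm{vel},\beta}V=\lambda V$. By the parabolic regularity of Proposition~\ref{prop:Lss} (the eigenfunction equals $e^{-\lambda\beta\tau}T_{L_{ss}}(\tau)V$), $V$ lies in $W^{k,2}\cap W^{k,4}$ for every $k$. Set $\Omega=\operatorname{curl}V\in Y$. Applying curl to the eigenvalue equation uses two facts: the curl of the drift term gives $\tfrac12\xi\cdot\nabla\Omega+\Omega$, which is the vorticity drift operator, and the curl of $\mathbb{P}(\bar U\cdot\nabla V+V\cdot\nabla\bar U)$ gives $[\bar U,\Omega]+[V,\bar\Omega]$. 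Hence $\tilde{L}_{\mathrm{vor},\beta}\Omega=\lambda\Omega$. Moreover $\Omega\neq 0$: otherwise $V$ would be divergence free and curl free in $L^2$, hence zero. The domain condition $\xi\cdot\nabla\Omega\in Y$ follows from the equation itself, since all other terms lie in $Y$ because $\bar U,\nabla\bar U$ are bounded.

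\emph{Vorticity to velocity.} Given an eigenfunction $\Omega\in D(\tilde{L}_{\mathrm{vor},\beta})$, define $V=\operatorname{curl}(-\Delta)^{-1}\Omega$ by Biot--Savart. The difficulty is that $\Omega\in L^2\cap L^4$ only gives $V\in\dot W^{1,2}\cap\dot W^{1,4}$, hence $V\in L^6\cap L^\infty$-type spaces, not $V\in L^2$. This is where the argument departs from \cite{ABC}: there one uses compact support of $\bar U$ and localization of the eigenfunction, which is unavailable here. I plan to obtain $L^2$ bounds by a weighted/moment argument. Taking the curl-inverse of the eigenvalue equation shows $V$ satisfies the velocity eigenvalue equation modulo a gradient. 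Writing $(\lambda-\beta^{-1}(\tfrac12+\tfrac12\xi\cdot\nabla))V$, the drift operator $\tfrac12\xi\cdot\nabla+\tfrac12$ has, on $L^2(\mathbb{R}^3)$, spectrum on the line $\Re z=\tfrac12-\tfrac34=-\tfrac14$, while on $L^q$ it sits at $\tfrac12-\tfrac3{2q}$. So for $\Re\lambda>\tfrac12$ the resolvent of $\beta^{-1}\mathcal{D}$ is bounded on every $L^q$ ($q<\infty$), uniformly in the relevant range. I will then write
\begin{equation*}
V=(\lambda-\beta^{-1}\mathcal{D})^{-1}\bigl(-\mathbb{P}(\bar U\cdot\nabla V+V\cdot\nabla\bar U)\bigr),
\end{equation*}
and check that the forcing lies in $L^2$. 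The isotropic decay $|\bar U|\lesssim|\xi|^{-1}$, $|\nabla\bar U|\lesssim|\xi|^{-2}$, combined with $V\in L^6$ and $\nabla V\in L^2$, gives $V\cdot\nabla\bar U\in L^2$ via Hölder, since $|\xi|^{-2}\in L^3$ away from the origin. Similarly $\bar U\cdot\nabla V\in L^2$ because $\bar U\in L^\infty$. The Leray projection is bounded on $L^2$, so $V\in L^2$, and $V\in L^4$ likewise. Thus $V\in X$ is a nonzero eigenfunction (nonzero since $\operatorname{curl}V=\Omega$). Uniqueness of the decaying solution of the resolvent equation identifies this $V$ with the Biot--Savart one.

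\textbf{Main obstacle.} The delicate step is the last one: justifying that the Biot--Savart velocity, a priori only in homogeneous spaces, actually satisfies the resolvent identity in a space where $\lambda$ is in the resolvent set of $\beta^{-1}\mathcal{D}$. To handle it, I would first apply the resolvent on $L^6$ (where the spectral bound of $\mathcal{D}$ is $\tfrac12-\tfrac14<\Re\lambda$) to identify $V$. I would then bootstrap to $L^2$ and $L^4$ using the decay of $\bar U$, and the threshold $\Re\lambda>\tfrac12$ guarantees invertibility on all $L^q$.
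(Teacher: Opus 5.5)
Your proposal is correct, but for the nontrivial direction it takes a different route from the paper. The velocity-to-vorticity direction is the same as the paper's: apply curl, using $\operatorname{curl}(\tfrac12\xi\cdot\nabla V+\tfrac12V)=\tfrac12\xi\cdot\nabla\Omega+\Omega$.

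\emph{How the paper does vorticity-to-velocity.} It works with $\Omega$. It keeps the transport term $\beta\bar U\cdot\nabla\Omega$ on the left and uses isotropic decay to show the remaining terms $F_1$ lie in $L^1\cap L^\infty$. It then undoes the self-similar variables and applies a Duhamel formula for the heat equation with singular drift $t^{-1/2}\bar U(xt^{-1/2})$, citing Jia--\v{S}ver\'ak, to get $\Omega\in L^{6/5}$. Hardy--Littlewood--Sobolev then gives $\mathrm{BS}(\Omega)\in L^2$. The hypothesis $\Re\lambda>\tfrac12$ is used exactly to keep $\|f_1(t)\|_{L^1}=t^{\Re\lambda-1/2}\|F_1\|_{L^1}$ bounded as $t\to0$.

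\emph{How yours differs.} You work with $V=\mathrm{BS}(\Omega)$ directly. Since $V\in L^6$ and $\nabla V\in L^2$, the whole perturbation, transport term included, is an $L^2$ forcing once $\nabla\bar U\in L^3$. You then only invert the free Ornstein--Uhlenbeck-type operator, first on $L^6$ and then on $L^2$ by consistency of resolvents (both are Laplace transforms of the same explicit semigroup). This bypasses the perturbed propagator entirely. It also only needs $\Re\lambda>\tfrac14$ for $L_{\mathrm{vel},\beta}$ (respectively $\Re(\beta\lambda)>\tfrac14$ for the rescaled operators), since $\tfrac14$ is the $L^6$ growth bound of $\mathcal D$. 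That is weaker than the paper's threshold $\tfrac12$.

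\emph{Closing the step you flag.}
\begin{enumerate}
\item The domain condition $\xi\cdot\nabla\Omega\in Y$ gives $\xi\cdot\nabla V=\mathrm{BS}(\xi\cdot\nabla\Omega)+V\in L^6$. This follows by differentiating $\mathrm{BS}(\Omega(s\,\cdot))(x)=s^{-1}V(sx)$ at $s=1$.
\item $\Omega\in W^{2,4}\subset W^{1,\infty}$ gives $\nabla V,\nabla^2V\in L^6$ by Calder\'on--Zygmund.
\item Hence the residual $(\lambda-\beta^{-1}\mathcal D)V+\mathbb P(\bar U\cdot\nabla V+V\cdot\nabla\bar U)$ lies in $L^6$. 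It is divergence-free, and curl-free by the vorticity equation, so it vanishes.
\end{enumerate}
This places $V$ in the $L^6$ domain of $\mathcal D$ with exactly the resolvent identity you want.

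One slip: your reduction of the second equivalence to the first runs backwards. For $\beta\ge1$, dividing by $\beta$ lowers the real part, so it is the first equivalence that follows from the second. Your direct argument makes this moot.
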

\begin{proof}
    Both equivalences follow by similar arguments. We prove the second one. 
    Assume $\lambda \in \sigma_p(L_{\mathrm{vel},\beta})$, then there exists a $V \not = 0$ such that $\lambda V = L_{\mathrm{vel},\beta} V$. Note that $V \in D(L_{\mathrm{vel},\beta}) \subset \mathrm{W}^{2,2}(\mathbb{R}^3) \cap \mathrm{W}^{2,4}(\mathbb{R}^3)$ and $\textrm{div } V = 0$. Therefore, we can apply the $\mathrm{curl}$ and obtain for $\Omega = \mathrm{curl}(V) \not = 0$ with $\lambda \Omega = L_{\mathrm{vor},\beta} \Omega$. The other inclusion follows by applying the Biot-Savart operator and using its mapping properties. Indeed, once we can show $\Omega \in \mathrm{L}^{6/5}(\mathbb{R}^3)$ the mapping properties of the Biot-Savart operator yields $U = \mathrm{BS}(\Omega) \in \mathrm{L}^2_{\textrm{df}}(\mathbb{R}^3)$ is an eigenfunction of $\tilde{L}_{\mathrm{vel},\beta}$. In order to show this, we consider
    \begin{equation*}
        \lambda \Omega - \left( 1+ \frac{\xi}{2} \cdot\nabla \right) \Omega - \Delta \Omega = F := - \beta [\bar{U},\Omega] - \beta [U,\bar{\Omega}].
    \end{equation*}
    Since $\Omega\in D(L_{\textrm{vor},\beta}),$ we have $F \in \mathrm{L}^2(\mathbb{R}^3)$. However, since the background profile cannot be compactly supported, the proof now departs from that in \cite{ABC}. Instead, we separate the part of $F$ that is well-controlled and of zeroth-order from the part that contributes a transport term. We have instead
    \begin{equation*}
        \lambda \Omega - \left( 1+ \frac{\xi}{2} \cdot\nabla \right) \Omega +\beta \overline{U}\cdot\nabla \Omega - \Delta \Omega = F_1 := F+\beta \overline{U}\cdot\nabla \Omega,
    \end{equation*}
    where, more precisely,
    $$F_1 = -\beta\left( U\cdot\nabla \overline{\Omega}+\overline{\Omega}\cdot\nabla U +\Omega \cdot \nabla \overline{U}\right).$$
    Since $\Omega \in D(L_{\textrm{vor},\beta})$, we have $\Omega \in L^2(\mathbb{R}^3)$, and in particular $K_{BS}\ast \Omega= U \in L^6(\mathbb{R}^3)$. Due to its isotropic decay (spatial smoothness is known for any self-similar solution to the incompressible Navier-Stokes equations), we have $\overline{U}\in L^{3,\infty}(\mathbb{R}^3)$, $\nabla\overline{U}, \overline{\Omega}\in L^{3/2,\infty}(\mathbb{R}^3)$, and $\nabla\overline{\Omega}\in L^{1,\infty}(\mathbb{R}^3)$. In particular, $F_1\in L^1(\mathbb{R}^3)\cap L^\infty(\mathbb{R}^3)$ (with room to spare).
    
    We now undo the self-similar variables by considering
    \begin{equation*}
        h(x,t) := t^{\lambda-1} \Omega (x t^{-1/2}), \qquad 
        f_1(x,t) := t^{\lambda-2} F_1 (x t^{-1/2}) .
    \end{equation*}
    We obtain
    \begin{equation*}
        \partial_t h +\beta t^{-1/2}\overline{U}(xt^{-1/2})\cdot\nabla h - \Delta h = f_1 , \qquad h(0) = 0 
    \end{equation*}
    where we have used $\Re \lambda > 1/4$ to obtain $h(0) = 0$ from setting $q=2$ and $d=3$ in
    \begin{equation}
         \| h(t) \|_{\mathrm{L}^q(\mathbb{R}^d)} = t^{\mathrm{Re} \lambda - 1 + \frac{d}{2q}} \| \Omega \|_{\mathrm{L}^q(\mathbb{R}^d)} \to 0 .
    \end{equation}
    Note that, due to the isotropic decay of $\overline{U}$, $t^{-1/2}\overline{U}(xt^{-1/2})$ behaves like the $(-1)-$ homogeneous data $u_0(x)$ for $t$ near $0$, and this type of perturbation was dealt with in Section 3 of \cite{JS1}. Now we observe that when $\textrm{Re}(\lambda)>1/2$, $f_1(t)$ has uniformly bounded $L^1$ norm for all finite times by setting $q=1$ and $d=3$ in:
    \begin{equation*}
        \| f(t) \|_{\mathrm{L}^q(\mathbb{R}^d)}
        = t^{\mathrm{Re} \lambda - 2 + \frac{d}{2q}} \| F \|_{\mathrm{L}^q(\mathbb{R}^d)} .
    \end{equation*}
    Lastly, the regularity theory of the perturbed heat equation generating a continuous semigroup $T$ (the perturbation is essentially a transport term by a lower-order singular term $t^{-1/2}\overline{U}(xt^{-1/2})$ as dealt with in \cite{JS1}) implies 
    \begin{equation*}
        \Omega(\xi)
        = h(\xi,1) 
        = \int_0^1 T (1-s) f(s) \, \mathrm{d} s \in \mathrm{L}^{6/5}(\mathbb{R}^3),
    \end{equation*}
    since $F\in L^1(\mathbb{R}^3)\cap L^\infty(\mathbb{R}^3)$.
\end{proof}

We can now establish the following singular limit result.

\begin{lemma}\label{lem:singular limit}
    Assume that there exists an unstable eigenvalue of the linearization of the three-dimensional Euler equation $\lambda_\infty \in \sigma_p(\tilde{L}_{\mathrm{vor},\infty}) \cap \{ \mu \in \mathbb{C} \colon \Re \mu > 0 \}$. Furthermore, assume that
    \begin{equation*}
        R(\lambda,\tilde{L}_{\mathrm{vel},\beta}) V \to R(\lambda,\tilde{L}_{\mathrm{vel},\infty}) V, \quad \text{ or } \quad 
        R(\lambda,\tilde{L}_{\mathrm{vor},\beta}) \Omega \to R(\lambda,\tilde{L}_{\mathrm{vor},\infty}) \Omega 
    \end{equation*}
    uniformly on compact sets $K \subset \{ \mu \in \mathbb{C} \colon \Re \mu > 0 \}\cap \rho(L_{\textrm{vor},\infty})$.
    Then there exists $\beta_0 >0$ such that for all $\beta \geq \beta_0$ there exists unstable eigenvalues $\tilde{\lambda}_\beta \in \sigma_p(\tilde{L}_{\mathrm{vor},\beta}) \cap \{ \mu \in \mathbb{C} \colon \Re \mu > 0 \}$ with
    \begin{equation*}
        \tilde{\lambda}_\beta \to \lambda_\infty . 
    \end{equation*}
    Furthermore, $\lambda_\beta = \beta \tilde{\lambda}_\beta$ is an unstable eigenvalue of $L_{\mathrm{vor},\beta}$ and $L_{\mathrm{vel},\beta}$. 
\end{lemma}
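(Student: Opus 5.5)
The plan is to use the standard perturbation theory for isolated eigenvalues via Riesz projections, as in \cite{ABC}, with Kato's continuity of spectral projections under strong resolvent convergence.

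\textbf{Step 1: Riesz projections.} Since $\lambda_\infty$ is assumed to be an unstable eigenvalue, I would first argue that it is isolated in $\sigma(\tilde{L}_{\mathrm{vor},\infty})$ within $\{\Re\mu>0\}$. If this is not automatic, I would take it as part of the hypothesis, reading the statement as concerning a discrete eigenvalue. I would then choose a small circle $\Gamma=\partial B_r(\lambda_\infty)$ contained in $\{\Re\mu>0\}\cap\rho(\tilde{L}_{\mathrm{vor},\infty})$. The set $\Gamma$ is compact, so by hypothesis the resolvents converge strongly and uniformly on $\Gamma$. The first thing to check is that $\Gamma\subset\rho(\tilde L_{\mathrm{vor},\beta})$ for large $\beta$; this is implicit in the assumption that the resolvents exist and converge there. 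I would then define
\[
P_\beta=\frac{1}{2\pi i}\oint_\Gamma R(\mu,\tilde{L}_{\mathrm{vor},\beta})\,d\mu,\qquad P_\infty=\frac{1}{2\pi i}\oint_\Gamma R(\mu,\tilde{L}_{\mathrm{vor},\infty})\,d\mu .
\]
Uniform strong convergence on $\Gamma$ gives $P_\beta\Omega\to P_\infty\Omega$ for each $\Omega$.

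\textbf{Step 2: nontriviality.} Let $\eta\neq0$ be an eigenfunction for $\lambda_\infty$. Then $P_\infty\eta=\eta$, so $P_\beta\eta\to\eta\neq0$, and hence $P_\beta\neq0$ for $\beta\ge\beta_0$. It follows that $\sigma(\tilde{L}_{\mathrm{vor},\beta})\cap B_r(\lambda_\infty)\neq\emptyset$.

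\textbf{Step 3: eigenvalue rather than spectrum.} This is the main obstacle: strong convergence does not control the rank of $P_\beta$, and I must show that the spectrum inside $\Gamma$ consists of eigenvalues. My first attempt would use the structure $\tilde L_{\mathrm{vor},\beta}=\beta^{-1}\mathcal{D}+(\text{bounded or compact part})$. For fixed finite $\beta$ and $\Re\mu>0$ large relative to $\beta^{-1}\omega_0(\mathcal D)$, the term $[U,\bar\Omega]$, with $U=\mathrm{BS}(\Omega)$ and $\bar\Omega$ decaying, is compact relative to the dissipative principal part. Its essential spectrum therefore lies in a half-plane $\{\Re\mu\le c/\beta\}$, with $c$ the growth bound of the transport plus diffusion part. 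This is obtained by Gearhart--Prüss or energy estimates, using the fact that transport by $\bar U$ and $-[\cdot,\bar\Omega]$ (stretching) give only $O(1)$ growth that is absorbed once $\beta$ is large. In fact I would bound it by $\beta^{-1}(\tfrac12+\ldots)+\|\nabla\bar U\|_\infty$, which must be compared with $\Re\lambda_\infty$. If this is not small, I would instead rely on analytic Fredholm theory, using compactness of the $[U,\bar\Omega]$ term. Once the spectrum inside $\Gamma$ is discrete, $P_\beta\neq0$ yields an eigenvalue $\tilde\lambda_\beta$ of finite multiplicity. Shrinking $r\to0$ along a sequence gives $\tilde\lambda_\beta\to\lambda_\infty$, and $\Re\tilde\lambda_\beta>0$ because $\Gamma\subset\{\Re\mu>0\}$.

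\textbf{Step 4: conclusion.} Scaling gives $\lambda_\beta=\beta\tilde\lambda_\beta\in\sigma_p(L_{\mathrm{vor},\beta})$. Since $\Re\lambda_\beta=\beta\Re\tilde\lambda_\beta\to\infty$, we have $\Re\lambda_\beta>\tfrac12$ for large $\beta$, so Lemma~\ref{lem:vor-vel} transfers the eigenvalue to $L_{\mathrm{vel},\beta}$. If the hypothesis is instead the velocity resolvent convergence, the same argument runs in $X$, and Lemma~\ref{lem:vor-vel} is used in the other direction.
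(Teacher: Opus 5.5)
Your Steps 1, 2 and 4 follow the paper's argument: its proof cites Kato's theory of stable eigenvalues (Section VIII.1.4), then rescales and applies Lemma~\ref{lem:vor-vel}. You are right that strong resolvent convergence only yields $P_\beta\neq0$, so everything hinges on Step 3 (Kato's ``stability'' condition, which the citation leaves unverified). But Step 3, as you argue it, does not close for the lemma as stated.

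The background stretching $\Omega\cdot\nabla\bar U$ is not absorbed for large $\beta$. In $\tilde{L}_{\mathrm{vor},\beta}=\beta^{-1}\mathcal{D}+M+S+K$ only the diffusion carries the factor $\beta^{-1}$. So energy estimates or Gearhart--Pr\"uss on the part you treat as non-compact only locate the essential spectrum below roughly $\beta^{-1}c+\|\nabla\bar U\|_\infty$ (compare $\tilde\eta$ in Lemma~\ref{lem:semigroup properties}). As your own corrected bound shows, this does not tend to $0$.

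The fallback does not repair this. Analytic Fredholm theory for $\mu\mapsto 1-R(\mu,A)C$ needs $\mu\in\rho(A)$ for the non-compact part $A$ you keep. As long as $A$ contains $\Omega\cdot\nabla\bar U$, you only know this above that same bound. Also, without the diffusion only $K$ is compact: $\bar\Omega\cdot\nabla\mathrm{BS}(\Omega)$ is a zero-order operator, which is why the paper puts it into $S$.

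So your Step 3 works when $\Re\lambda_\infty>\tilde\eta$, which is the hypothesis of Theorem~\ref{thm:non-uniquness-vor} where the lemma is used. In that range the factorization $\mu-\tilde{L}_{\mathrm{vor},\beta}=(\mu-A_\beta)(1-R(\mu,A_\beta)K)$ gives discreteness for all large $\beta$ and for $\beta=\infty$, and even makes $\lambda_\infty$ automatically isolated. For $0<\Re\lambda_\infty\le\tilde\eta$ there is a gap.

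The missing idea is that at fixed $\beta$ nothing needs energy estimates. Since $D(\mathcal{D})\subset W^{2,q}$ and $\bar U,\nabla\bar U,\bar\Omega,\nabla\bar\Omega$ decay, the entire perturbation $M+S+K$ is relatively compact with respect to $\beta^{-1}\mathcal{D}$. This includes $\Omega\cdot\nabla\bar U$ and $\bar\Omega\cdot\nabla\mathrm{BS}(\Omega)$, because $\nabla\mathrm{BS}$ is bounded on $W^{2,q}$. Moreover, $(M+S+K)R(\mu,\beta^{-1}\mathcal{D})$ has norm less than $1$ for large real $\mu$.

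Analytic Fredholm theory on the connected set $\{\Re\mu>\beta^{-1}\omega_0(\mathcal{D})\}$ then shows that the spectrum of $\tilde{L}_{\mathrm{vor},\beta}$ there consists of isolated eigenvalues of finite multiplicity. This is the resolvent counterpart of the lemma computing $\omega_{\mathrm{ess}}(L_{ss})$. Once $\beta^{-1}\omega_0(\mathcal{D})<\min_{\mu\in\Gamma}\Re\mu$, your $P_\beta\neq0$ produces $\tilde\lambda_\beta$.

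So the stretching becomes harmless because the diffusion smooths at fixed $\beta$, not because $\beta$ is large. In this range the isolation of $\lambda_\infty$ in $\sigma(\tilde{L}_{\mathrm{vor},\infty})$ remains an extra hypothesis, as you noted.
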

\begin{proof}
    The first part follows from \cite[Section VIII.1.4]{Kat:66}. By choosing $\beta \geq \beta_0$ sufficiently large, we can guarantee that $\Re \lambda = \beta \cdot \Re \tilde{\lambda}_\beta > \frac{1}{2}$ and hence the claim follows from Lemma~\ref{lem:vor-vel}.
\end{proof}

In particular, for $L_{ss} = L_{\mathrm{vel},\beta_0}$ we obtain a non-trivial ancient decaying solution $U_{\mathrm{lin}}$ and therefore, by Proposition \ref{prop:Uper}, the existence of two distinct solutions $U \not = \bar{U}$ and $u \not = \bar{u}$ to the incompressible Navier-Stokes equations with scaling-invariant initial data $u_0(x)=|x|^{-1}\cdot v(\tfrac{x}{|x|})$.

Next, we study why the assumptions from Lemma~\ref{lem:singular limit} are not automatically fulfilled. We split the operator in vorticity form $L_{\mathrm{vor},\beta}$ in the following way:
\begin{equation*}
    L_{\mathrm{vor},\beta} = \beta^{-1}\mathcal{D} + M + S + K
\end{equation*}
using the operators 
\begin{align*}
    \mathcal{D} \Omega &:= \Delta \Omega + \frac{\xi}{2} \nabla \Omega + \Omega , \\
    M \Omega &:= - \bar{U} \cdot \nabla \Omega , \\
    S \Omega &:= \bar{\Omega} \cdot \nabla U + \Omega \cdot \nabla \bar{U} , \\
    K \Omega &:= - (\mathrm{BS}(\Omega) \cdot \nabla) \bar{\Omega} ,
\end{align*}
equipped with their natural domains. These operators have the following properties.

\begin{lemma}\label{lem:semigroup properties}
For $\beta \geq 1$ we have
    \begin{enumerate}[(i)]
        \item The operators $\mathcal{D}$ and $\beta^{-1} \mathcal{D}$ generate quasi-contractive semigroups on the space $\mathrm{L}^q(\mathbb{R}^3)$;
        \item the operator $M$ is skew-symmetric, dissipative, and relatively $\mathcal{D}$-bounded with infinitesimal bound; 
        \item the operator $S$ is bounded with bound $\eta := \| S \|$. 
    \end{enumerate}
In particular, $A_{\beta} := \frac{1}{\beta} \mathcal{D} + M + S$ and $A_\infty := M+S$ generate quasi-contractive semigroups with growth-bounds $\omega_0(A_{\beta}),\omega_0(A_\infty) \leq \max\{ (1-\frac{3}{2q}+\eta),\eta \} =: \tilde{\eta}$. This implies 
\begin{equation*}
    \| R(\lambda,A_{\beta}) \| \leq \frac{1}{\Re \lambda - \tilde{\eta}}
\end{equation*}
and strong convergence 
\begin{equation*}
    R(\lambda,A_{\beta})\Omega \to 
    R(\lambda,A_{\infty})\Omega
\end{equation*}
locally uniformly in $\{ \mu \in \mathbb{C} \colon \Re \mu > \tilde{\eta} \}$ 
as $\beta \to + \infty$.
\end{lemma}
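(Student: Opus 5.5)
I will verify the three listed properties one at a time. After that I will assemble the generation result with standard perturbation theory for quasi-contractive semigroups and derive the resolvent bound and the strong resolvent convergence from it.

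\emph{Step (i).} I would reuse the fact from the proof of Proposition~\ref{prop:Lss}: in velocity form, $\Delta + \frac{\xi}{2}\cdot\nabla + \frac12$ generates a quasi-contractive semigroup on $\mathrm{L}^q$. The vorticity version differs from it only by a constant shift of $\frac12$. Quasi-contractivity is checked directly on $C_c^\infty$ functions, using the duality map $|\Omega|^{q-2}\bar{\Omega}$. The Laplacian part is dissipative. Integrating by parts gives $\int \frac{\xi}{2}\cdot\nabla\Omega\,|\Omega|^{q-2}\bar\Omega = -\frac{3}{2q}\|\Omega\|_q^q$ in real part. Hence $\mathcal{D}-(1-\frac{3}{2q})$ is dissipative. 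The range condition comes from the explicit Mehler-type formula obtained by undoing the self-similar variables, and Lumer--Phillips then gives generation. For $\beta^{-1}\mathcal{D}$ the semigroup is just $T_0(\tau/\beta)$. Its growth bound is $\beta^{-1}(1-\frac{3}{2q})$, which is at most $\max\{1-\frac{3}{2q},0\}$ when $\beta\ge1$.

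\emph{Step (ii).} Since $\operatorname{div}\bar U=0$, we have $\operatorname{Re}\int (\bar U\cdot\nabla\Omega)|\Omega|^{q-2}\bar\Omega = \frac1q\int \bar U\cdot\nabla|\Omega|^q = 0$. So $M$ is dissipative, and when $q=2$ it is skew-symmetric as well. Relative boundedness follows because $\bar U\in\mathrm{L}^\infty$, which gives $\|M\Omega\|_q\le \|\bar U\|_\infty\|\nabla\Omega\|_q$. Interpolating with $D(\mathcal{D})\subset \mathrm{W}^{1,q}$, as in \cite[(2.6)]{JS1}, gives $\|\nabla\Omega\|_q\le \varepsilon\|\mathcal{D}\Omega\|_q + C_\varepsilon\|\Omega\|_q$, i.e.\ relative bound zero.

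\emph{Step (iii).} Here I read $S\Omega = \bar\Omega\cdot\nabla\mathrm{BS}(\Omega) + \Omega\cdot\nabla\bar U$. The second term is bounded by $\|\nabla\bar U\|_\infty\|\Omega\|_q$. In the first, $\nabla\mathrm{BS}(\Omega)$ is a Calder\'on--Zygmund operator applied to $\Omega$, so it is bounded on $\mathrm{L}^q$ for $1<q<\infty$, and $\|\bar\Omega\|_\infty<\infty$. This gives $\eta<\infty$.

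\emph{Assembly.} For $A_\beta$, I would apply the Lumer--Phillips perturbation result for dissipative operators with relative bound less than $1$ (e.g.\ \cite{EN}, Thm.~III.2.7). It shows that $\beta^{-1}\mathcal{D}+M$ generates a quasi-contractive semigroup with the growth bound from (i). The delicate point is that the relative bound with respect to $\beta^{-1}\mathcal{D}$ equals $\beta$ times the bound with respect to $\mathcal{D}$, and that is still zero. Adding the bounded operator $S$ raises the bound by at most $\eta$, which gives $\tilde\eta$. For $A_\infty$, the transport operator $M$ on its maximal domain generates the isometric flow group $\Omega\mapsto\Omega\circ X_{-\tau}$ of the smooth, bounded, divergence-free field $\bar U$, and again $S$ adds $\eta$. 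The resolvent bound then follows from Hille--Yosida. For convergence, I would use the identity
\[
R(\lambda,A_\beta)-R(\lambda,A_\infty)=\beta^{-1}R(\lambda,A_\beta)\mathcal{D}R(\lambda,A_\infty).
\]
Applied to $\Omega$ in a core $\mathcal{C}$ for which $R(\lambda,A_\infty)\mathcal{C}\subset D(\mathcal{D})$, for instance $\mathcal{C}=(\lambda-A_\infty)C_c^\infty$, the difference is $O(\beta^{-1})$ in norm, uniformly on compact sets. Density of $\mathcal{C}$ together with the uniform bound $(\Re\lambda-\tilde\eta)^{-1}$ extends this to all $\Omega$ by an $\varepsilon/3$ argument. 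Equivalently, this is the Trotter--Kato theorem.

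\emph{Main obstacle.} I expect the hardest step to be identifying a common core. One needs $D(\mathcal{D})$ to be a core for $A_\infty$ that $A_\infty$ maps densely, which requires showing that the transport flow of $\bar U$ preserves $\mathrm{W}^{2,q}\cap\{\xi\cdot\nabla\Omega\in\mathrm{L}^q\}$. The weighted condition is the problem, because $\bar U$ is not compactly supported. My first attempt would use the bounds $|\nabla\bar U|\lesssim\langle\xi\rangle^{-2}$ from isotropic decay, which give $|\nabla X_\tau|$ bounded and $|X_\tau(\xi)-\xi|\lesssim\tau$.
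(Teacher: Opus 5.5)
Your proposal is correct and is essentially the paper's proof: the same dissipativity computation and Lumer--Phillips argument for (i), the perturbation theorem \cite[Theorem III.2.7]{EN} for $\beta^{-1}\mathcal{D}+M$ followed by bounded perturbation by $S$ and Hille--Yosida, and Trotter--Kato through the core $C_c^\infty$, which your second-resolvent-identity argument simply unpacks. Two small points. First, local uniformity in $\lambda$ should come, as in the paper, from the $\beta$-uniform bound $\|R(\lambda,A_\beta)\|\le(\Re\lambda-\tilde\eta)^{-1}$, which makes the family equicontinuous in $\lambda$; it does not follow directly from your $\lambda$-dependent dense set. Second, the step you single out as the main obstacle is in fact immediate: the flow of the smooth, bounded field $\bar U$ maps compact sets to compact sets, so the transport group generated by $M$ leaves $C_c^\infty$ invariant. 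Hence $C_c^\infty$ is a core for $M$ by the invariant-subspace core criterion, and therefore a core for $A_\infty=M+S$ since $S$ is bounded. This is exactly what the paper uses, and no invariance of the weighted space $\mathrm{W}^{2,q}\cap\{\xi\cdot\nabla\Omega\in\mathrm{L}^q\}$ is needed.
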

\begin{proof}
    Part (i) can be found e.g. in \cite{MLP}. Nevertheless, we outline the proof here. 
    It suffices to study $\Delta + \frac{\xi}{2} \nabla$ since the remainder is just rescaling of the semigroup, see \cite[Section II.2.2]{EN}. For this operator, the quasi-dissipativity can be shown by a direct calculation and we obtain that $\mathcal{D}$ generates a quasi-contractive semigroup $(T_{\mathcal{D}}(\tau))_{\tau \geq 0}$ by the Lumer-Phillips theorem, see \cite[Theorem II.3.15]{EN}. Its growth bound is given by
    $\omega_0(\mathcal{D})= 1 - \frac{3}{2q}$. Using rescaling of the semigroup, see \cite[Section II.2.2]{EN}, we obtain that that $\beta^{-1}\mathcal{D}$ generate semigroups with
    \begin{equation*}
        T_{\beta^{-1}\mathcal{D}}(\tau) = 
        T_{\mathcal{D}}(\beta^{-1}\tau) .
    \end{equation*}
    This implies quasi-contractivity with growth bounds $\omega_0(\beta^{-1}\mathcal{D})= \beta^{-1}( 1 - \frac{3}{2q})$. 

    \medskip 

    The skew-symmetry and dissipativity of $M$ follow from a direct calculation. The $\mathcal{D}$-boundedness follows from the fact that $\bar{U}$ is smooth, bounded and has bounded derivatives. Hence, $M$ is lower order compared to $\mathcal{D}$. This shows part (ii). Part (iii) follows from the fact that $\bar{U}$ is smooth, bounded and has bounded derivatives, the use of Hölder's inequality, and the fact that $\Omega \mapsto \nabla U$ is a singular integral operator of order zero and thus bounded on Lebesgue spaces. 

    \medskip 

    Using part (i) and (ii) in conjunction with the perturbation theorem of quasi-contractive semigroups from \cite[Theorem III.2.7]{EN} we obtain that $\beta^{-1} \mathcal{D} + M$ generate quasi-contractive $C_0$-semigroups with growth bounds $\omega_0(\beta^{-1} \mathcal{D} + M)= \beta^{-1}( 1 - \frac{3}{2q})$. Now, part (iii) and the bounded perturbation theorem, \cite[Theorem III.1.3]{EN} implies that $A_\beta$ generate quasi-contractive $C_0$-semigroups with growth bounds $\omega_0(A_\beta)= \beta^{-1}( 1 - \frac{3}{2q}) + \eta$. In particular, we obtain the uniform bounds
    \begin{equation*}
        \| T_{A_\beta}(\tau) \| \leq e^{\tau \left(\beta^{-1}( 1 - \frac{3}{2q}) + \eta\right) } \leq \max\{ e^{\tau \left(( 1 - \frac{3}{2q}) + \eta\right) }, e^{\tau \eta} \} .
    \end{equation*}
    The Hille-Yosida theorem, see \cite[Theorem II.3.5]{EN}, implies the desired resolvent bounds. Note that $M$, which is a transport semigroup by a smooth and decaying vector field, generates a continuous contraction semigroup and that $S$ is a bounded operator. Therefore, $A_\infty$ generates a continuous semigroup. Moreover, $C^\infty_c$ is a core for $A_\infty$, $C^\infty_c\subset D(A_\beta)$ for all $\beta>0$, and we have
    \begin{equation*}
        A_{\beta} \Omega \to A_\infty \Omega \qquad \textrm{ for } \Omega \in C^\infty_c(\mathbb{R}^3) .
    \end{equation*}
    The Trotter-Kato theorem, see \cite[Theorem III.4.8]{EN}, now implies the strong convergence 
    \begin{equation*}
    R(\lambda,A_{\beta})\Omega \to 
    R(\lambda,A_{\infty})\Omega .
    \end{equation*}
    Finally, uniformity of the convergence follows from the fact that we consider $\Re \mu > \tilde{\eta}$ and $\tilde{\eta}>0$ is an upper bound of the growth bound. Hence, on a fixed compact set the resolvents are uniformly bounded and analytic functions which converge strongly. This implies the uniform strong convergence. 
\end{proof}

Lemma~\ref{lem:semigroup properties} shows that the non-local term $K$ is in fact the helpful term for the generation of instability. To formalize the singular limit argument, we require uniform convergence of the resolvents. In vorticity form, the operator $K$ gains a derivative, which, along with the isotropic decay of the profile, allows it to be a compact operator. In this case, addition of the compact perturbation does not affect the essential spectrum but may add discrete eigenvalues to the the spectrum of the operator. By Lemma \ref{lem:semigroup properties}, we get uniform convergence of the resolvents away from this essential spectrum, with the exception of discrete eigenvalues. This allows the spectral projection argument from \cite{ABC} to be carried along and discrete eigenvalues to persist through the singular limit. 

\begin{lemma}\label{lem:K compact}
If the background is isotropically decaying, then
    the operator $$K \colon \mathrm{L}^q(\mathbb{R}^d) \to \mathrm{L}^q(\mathbb{R}^d)$$ is compact. 
\end{lemma}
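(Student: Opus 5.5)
We write $K\Omega = -\nabla\bar\Omega\cdot\mathrm{BS}(\Omega)$ as a composition: the Biot--Savart operator $\Omega\mapsto U=\mathrm{BS}(\Omega)$, which gains one derivative, followed by multiplication by the decaying matrix $-\nabla\bar\Omega$. Compactness should come from the classical principle that a derivative gain combined with decay of the multiplier at infinity is compact, i.e. a Rellich--Kondrachov argument with a cutoff. Isotropic decay together with the smoothness of self-similar profiles gives $|\nabla\bar\Omega(\xi)|\lesssim \langle\xi\rangle^{-3}$, and also bounds on higher derivatives. This is consistent with the $L^{1,\infty}$ bound on $\nabla\bar\Omega$ used in Lemma~\ref{lem:vor-vel}. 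In particular $\nabla\bar\Omega$ is bounded and tends to $0$ at infinity.

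The plan has three steps.

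\textbf{Step 1 (truncation).} Let $\chi_R$ be a smooth cutoff equal to $1$ on $B_R$ and supported in $B_{2R}$, and set $K_R\Omega := -\chi_R\,\nabla\bar\Omega\cdot \mathrm{BS}(\Omega)$. We want $\|K-K_R\|_{L^q\to L^q}\to 0$. The difficulty is that $\mathrm{BS}$ is not bounded $L^q\to L^q$: it maps $L^q\to L^{q^*}$ with $\frac1{q^*}=\frac1q-\frac13$ when $q<3$. By H\"older,
\[
\|(1-\chi_R)\nabla\bar\Omega\,U\|_{L^q}\le \|(1-\chi_R)\nabla\bar\Omega\|_{L^3}\|U\|_{L^{q^*}}\lesssim \|(1-\chi_R)\nabla\bar\Omega\|_{L^3}\|\Omega\|_{L^q},
\]
and the first factor tends to $0$ because $\langle\xi\rangle^{-3}\in L^3$. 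For $q\ge 3$ we replace the Hardy--Littlewood--Sobolev bound by a low/high frequency splitting of $\mathrm{BS}$, or by the local estimate for $\nabla U$ below. I expect this range to require the most care, and a restriction to the range of $q$ actually used in the paper ($q=2,4$) may be needed.

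\textbf{Step 2 (compactness of $K_R$).} For fixed $R$, $\nabla\mathrm{BS}$ is a Calder\'on--Zygmund operator, hence bounded on $L^q$. So on bounded sets of $L^q$ the functions $U|_{B_{2R}}$ are bounded in $W^{1,q}(B_{2R})$; the lower-order part is controlled locally by $L^{q^*}$, or by the frequency splitting for large $q$. By Rellich--Kondrachov, $U|_{B_{2R}}$ is precompact in $L^q(B_{2R})$. Multiplication by the smooth, bounded, compactly supported function $\chi_R\nabla\bar\Omega$ then shows that $K_R$ is compact.

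\textbf{Step 3 (conclusion).} $K$ is a norm limit of the compact operators $K_R$, and the compact operators form a closed ideal, so $K$ is compact.

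The main obstacle is the tail estimate in Step 1. It needs a quantitative decay of $\nabla\bar\Omega$ in a space paired with the integrability available for $\mathrm{BS}(\Omega)$, and this is exactly where isotropic decay enters. If only the weak bound $\nabla\bar\Omega\in L^{1,\infty}$ were available, we would interpolate with the $L^\infty$ bound to obtain $\nabla\bar\Omega\in L^{3}$ outside large balls with vanishing norm, and then use Lorentz-space H\"older.
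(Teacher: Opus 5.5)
Your argument uses the same mechanism the paper relies on when it cites \cite{ABCD}. The paper summarizes that mechanism just before the lemma: $K$ gains a derivative, and the decay of the profile makes it compact. Concretely: localize, get local compactness from $\nabla\mathrm{BS}$ being Calder\'on--Zygmund together with Rellich--Kondrachov, kill the tail by pairing the decay of $\nabla\bar\Omega$ with a global bound on $\mathrm{BS}(\Omega)$, and use that compact operators are norm-closed. For $1<q<3$ your three steps are complete and correct.

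A minor point: the pointwise bound $|\nabla\bar\Omega|\lesssim\langle\xi\rangle^{-3}$ does not follow from isotropic decay alone. For merely $C^\alpha$ data one should in general only expect $|\xi|^{-1-\alpha}$. You never need the pointwise bound, though: all you use is $\nabla\bar\Omega\in L^\infty$ with $\|(1-\chi_R)\nabla\bar\Omega\|_{L^3}\to 0$.

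What does need correcting is your plan for $q\ge 3$. No low/high frequency splitting can work there, because the statement itself fails in that range. The Biot--Savart kernel behaves like $|x|^{-2}$ at infinity, and this lies in $L^{q'}$ away from the origin only if $q<3$. So for $q\ge 3$ the low-frequency part of $\mathrm{BS}(\Omega)$ is not even defined for general $\Omega\in L^q$.

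Here is a concrete counterexample. Take a divergence-free $W\in C^\infty_c(\mathbb{R}^3;\mathbb{R}^3)$ with $(W(0)\cdot\nabla)\bar\Omega\not\equiv 0$; this is possible unless $\bar\Omega\equiv 0$. Set $\Omega_n(x):=n^{-3/q}(\mathrm{curl}\,W)(x/n)$. Then:
\begin{itemize}
\item $\|\Omega_n\|_{L^q}$ is constant and $\Omega_n\rightharpoonup 0$;
\item $\mathrm{BS}(\Omega_n)(x)=n^{1-3/q}W(x/n)$;
\item hence $K\Omega_n=-n^{1-3/q}(W(\cdot/n)\cdot\nabla)\bar\Omega$, which on any fixed ball is close to $-n^{1-3/q}(W(0)\cdot\nabla)\bar\Omega$.
\end{itemize}
So $K$ is unbounded on $L^q$ for $q>3$, and not compact for $q=3$, since a weakly null sequence is mapped to a sequence that does not tend to zero.

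The restriction $q<3$ is therefore forced, not a technical convenience. The case relevant to the paper is the intersection space $Y=L^2\cap L^4$, not $L^4$ by itself as you suggest. On $Y$ your proof goes through once you take the global bound on $U=\mathrm{BS}(\Omega)$ from the $L^2$ component:
\begin{itemize}
\item $\|U\|_{L^6}\lesssim\|\Omega\|_{L^2}$;
\item the $L^4$ tail is controlled by $\|(1-\chi_R)\nabla\bar\Omega\|_{L^{12}}\|U\|_{L^6}$, and the $L^2$ tail by $\|(1-\chi_R)\nabla\bar\Omega\|_{L^{3}}\|U\|_{L^6}$;
\item locally, $U$ is bounded in $W^{1,4}(B_{2R})$, which embeds compactly into both $L^4(B_{2R})$ and $L^2(B_{2R})$.
\end{itemize}
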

\begin{proof}
See Section 3 in \cite{ABCD}.
\end{proof}

Before we finish the proof of Theorem \ref{MAIN}, let us point out that, in this case, restricting the construction of $U_{\mathrm{lin}}$ to unstable eigenvalues is not a genuine limitation.

\begin{lemma}
    Suppose the background is isotropically decaying.
    We consider $L_{ss}$ as an operator on $\mathrm{L}^q_{df}(\mathbb{R}^3)$.
    Then $\omega_{\mathrm{ess}}(L_{ss}) = \frac{1}{2}-\frac{3}{2q}$. In particular, the unstable spectrum of $L_{ss}$ is either a collection of discrete eigenvalues or empty.
\end{lemma}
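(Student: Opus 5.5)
The plan is to split $L_{ss}$ as a main part plus a compact perturbation and use the invariance of the essential growth bound under compact perturbations. Concretely, I would write
\begin{equation*}
L_{ss} = \mathcal{D}_{\mathrm{vel}} + Q_1 + Q_2, \qquad \mathcal{D}_{\mathrm{vel}} V := \Delta V + \tfrac12 \xi\cdot\nabla V + \tfrac12 V, \quad Q_1 V := -\mathbb{P}(\bar U\cdot\nabla V), \quad Q_2 V := -\mathbb{P}(V\cdot\nabla \bar U).
\end{equation*}
By the proof of Proposition~\ref{prop:Lss}, $\mathcal{D}_{\mathrm{vel}}$ generates a semigroup on $\mathrm{L}^q_{df}$ with $\omega_0(\mathcal{D}_{\mathrm{vel}}) = \frac12 - \frac{3}{2q}$. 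The essential growth bound of the rescaled heat semigroup should coincide with this value. I would check this by exhibiting approximate eigenfunctions: rescaled bumps $\varepsilon^{-3/q}\phi(\cdot/\varepsilon)$ pushed out along the dilation flow, or equivalently Weyl sequences for every $\lambda$ with $\Re\lambda \le \frac12-\frac{3}{2q}$. Then the whole half-plane $\{\Re\lambda\le \frac12-\frac{3}{2q}\}$ is essential spectrum, and in particular $s_{\mathrm{ess}} = \omega_{\mathrm{ess}} = \frac12-\frac{3}{2q}$ for $\mathcal{D}_{\mathrm{vel}}$.

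The second step is to show that the perturbation $Q_1+Q_2$ does not change $\omega_{\mathrm{ess}}$. For $Q_2$, the isotropic decay gives $\nabla\bar U = O(|\xi|^{-2})$ together with smoothness. Hence $Q_2$ is multiplication by a function decaying at infinity, followed by $\mathbb{P}$. Composed with the smoothing of $T_0(\tau)$ for $\tau>0$, which gives $\mathrm{L}^q \to \mathrm{W}^{1,q}$ by Proposition~\ref{prop:Lss}, Rellich–Kondrachov on large balls plus the decaying tail yields that $Q_2 T_0(\tau)$ is compact. The transport term $Q_1$ is not compact, since $\bar U$ decays only like $|\xi|^{-1}$ and the term has a derivative. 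Here I would follow the strategy of Lemma~\ref{lem:K compact} and move to vorticity form. There $Q_1$ becomes the skew-symmetric $M$, which only modifies the contraction estimate by the decay of $\bar U$, while the nonlocal part $K$ is compact. The stretching term with $\nabla\bar U$ is compact after smoothing for the same reason as $Q_2$. Lemma~\ref{lem:vor-vel} then transfers the resulting spectral information back to velocity, at least for $\Re\lambda>\frac12$. For the full range, I would instead argue directly that $\mathbb{P}(\bar U\cdot\nabla)$ composed with $T_0(\tau)$ is a sum of a small operator and a compact one. The small operator comes from the far field, where $|\bar U|\le \varepsilon$ for $|\xi|\ge R$, and the compact one from the near field, where Rellich applies. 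Small perturbations change the growth bound by at most their norm, and $\varepsilon$ is arbitrary.

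Third, I would use the Duhamel formula \eqref{eq:duhamel}. If $T_{L_{ss}}(\tau)-T_0(\tau)$ is compact, or more generally its essential norm is arbitrarily small, for each $\tau>0$, then the standard result that $\omega_{\mathrm{ess}}$ is invariant under perturbations for which $\int_0^\tau T(\tau-s)\,Q\,T_0(s)\,ds$ is compact (Voigt; Engel–Nagel IV.2.12 and Section IV.2.b) gives $\omega_{\mathrm{ess}}(L_{ss}) = \omega_{\mathrm{ess}}(\mathcal{D}_{\mathrm{vel}}) = \frac12-\frac{3}{2q}$. The consequence then follows from the fact that spectrum in $\{\Re\lambda>\omega_{\mathrm{ess}}\}$ consists of isolated eigenvalues of finite algebraic multiplicity (EN Corollary IV.2.11). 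Thus the unstable spectrum is discrete eigenvalues or empty, at least for $q\le 3$, where $\omega_{\mathrm{ess}}\le 0$.

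The main obstacle is the transport term $\bar U\cdot\nabla$ with only $|\xi|^{-1}$ decay, which is neither compact nor small globally. I expect the far-field smallness argument with an $\varepsilon$-dependent splitting, combined with the $\tau^{-1/2}$ smoothing integrability, to handle it, but the uniform control of the essential norm is the delicate point.
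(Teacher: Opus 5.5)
Your proposal follows essentially the same route as the paper: both combine the Duhamel formula \eqref{eq:duhamel}, compactness of $\int_0^\tau T_{L_{ss}}(\tau-s)\,Q\,T_0(s)\,\mathrm{d}s$ deduced from isotropic decay, invariance of $\omega_{\mathrm{ess}}$, and the growth bound of $\mathcal{D}$, with the paper deferring details to Lemma 2.7 of Jia--\v{S}ver\'ak. Your near-field/far-field splitting already shows $Q_1 T_0(s)$ is compact for each $s>0$ (compact operators are norm-closed), so the transport term is no real obstacle and you can drop the vorticity detour; your remarks that one needs $\omega_{\mathrm{ess}}(\mathcal{D})$ and not just $\omega_0(\mathcal{D})$, and that the final assertion requires $q\le 3$, are correct refinements the paper leaves implicit.
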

\begin{proof}
    The proof relies on the Duhamel formula \eqref{eq:duhamel}. The isotropic decay condition implies that $\int_0^{\tau} T_{L_{ss}}(\tau-s) Q T_0(s) U_0 \, \mathrm{d} s$ with $Q V := - \mathbb{P}(\bar{U} \cdot \nabla V + V \cdot \nabla \bar{U})$ is a compact operator and therefore the claim. Now, the claim follows from the fact that $\mathcal{D}$ generates a $C_0$-semigroup with growth bound $\omega_0(\mathcal{D}) = \frac{1}{2}-\frac{3}{2q}$ on $\mathrm{L}^q(\mathbb{R}^3)$.
    We refer the reader to \cite[Lemma 2.7]{JS1} for details. 
\end{proof}

The result above is complemented by the following spectral result.

\begin{proposition} 
\label{SPEC2}
    Suppose the background is isotropically decaying.
    We consider $L_{ss}$ as an operator on $\mathrm{L}^q_{df}(\mathbb{R}^3)$.
    Then $\left\{\lambda \in \mathbb{C} : \textrm{Re}(\lambda) = \frac{1}{2} - \frac{3}{2q}\right\} \subset \sigma(L_{ss})$.
\end{proposition}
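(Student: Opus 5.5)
The plan is to show that every point $\lambda_0$ on the line $\Re\lambda = \frac12-\frac{3}{2q}$ lies in the approximate point spectrum of $L_{ss}$ on $\mathrm{L}^q_{df}(\mathbb{R}^3)$. We construct Weyl sequences that escape to spatial infinity, where the perturbation $Q$ is negligible because of the isotropic decay. The first step treats the unperturbed operator $\mathcal{D} = \Delta + \frac{\xi}{2}\cdot\nabla + \frac12$. Undoing self-similar variables, or equivalently conjugating by dilations, the semigroup satisfies
\[
T_0(\tau)V(\xi) = e^{\tau/2}\bigl(e^{(1-e^{-\tau})\Delta}V\bigr)(e^{\tau/2}\xi).
\]
For the large-scale part, the dominant behaviour is the dilation group $V \mapsto e^{\tau/2}V(e^{\tau/2}\cdot)$, whose $\mathrm{L}^q$ norm scales exactly like $e^{(\frac12-\frac{3}{2q})\tau}$.

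Accordingly, the candidate approximate eigenfunctions are homogeneous-type functions
\[
V_{n}(\xi) = \chi_n(|\xi|)\, |\xi|^{-a}\, \Phi(\xi/|\xi|), \qquad a := -2\lambda_0 + 1 ,
\]
where $\Phi$ is a smooth divergence-free $(-a)$-homogeneous profile, or simply the curl of a homogeneous potential so that the field is exactly divergence free on the support. The cutoff $\chi_n$ is supported on a logarithmically long annulus $R_n \le |\xi| \le R_n^{2}$, with $R_n\to\infty$, and is built as $\chi(\log|\xi|/\log R_n)$. To keep divergence-freeness after cutting off, we instead take $V_n = \mathrm{curl}(\chi_n \Psi)$ with $\Psi$ homogeneous of degree $1-a$.

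On the exact homogeneous field, $\frac{\xi}{2}\cdot\nabla V + \frac12 V = (\frac12 - \frac a2)V = \lambda_0 V$. The condition $\Re\lambda_0 = \frac12-\frac{3}{2q}$ means $\Re a = 3/q$. This is precisely the borderline at which $|\xi|^{-a}\notin \mathrm{L}^q$, with logarithmic divergence of $\|V_n\|_q^q \sim \log R_n$. The error terms then need to be bounded separately.
\begin{itemize}
\item The commutator $\frac{\xi}{2}\cdot\nabla\chi_n$ contributes $O((\log R_n)^{-1})$ relative to the homogeneous profile, so after normalisation the error is $O((\log R_n)^{-1}\cdot(\log R_n)^{1/q})\cdot(\log R_n)^{-1/q}\to0$.
\item The term $\Delta V_n$ is of size $|\xi|^{-a-2}$ and is therefore $o(1)$ relative to $\|V_n\|_q$, since it is integrable on $\{|\xi|\ge R_n\}$ and small there.
\item The terms in $Q V_n$ are handled using $|\overline U|\lesssim |\xi|^{-1}$ and $|\nabla\overline U|\lesssim|\xi|^{-2}$ from isotropic decay, together with the smoothness of self-similar profiles. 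Then $\overline U\cdot\nabla V_n$ and $V_n\cdot\nabla\overline U$ are $O(|\xi|^{-a-2})$, hence $o(\|V_n\|_q)$. The Leray projection $\mathbb{P}$ is bounded on $\mathrm{L}^q$ for $1<q<\infty$, which suffices.
\end{itemize}
This gives $\|(\lambda_0 - L_{ss})V_n\|_q/\|V_n\|_q \to 0$, so $\lambda_0\in\sigma_{ap}(L_{ss})$.

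The main obstacle is making the logarithmic normalisation work. The cutoff commutator must be genuinely smaller than the $(\log R_n)^{1/q}$ growth of the norm. This forces the long annulus and a cutoff that is slowly varying in $\log|\xi|$, which is exactly why the spectrum fills the critical vertical line rather than a half-plane. It also requires constructing nonzero divergence-free homogeneous fields of complex degree $-a$ with $\Re a=3/q$. I would first try $\Psi = |\xi|^{1-a}e_3$ and $V=\mathrm{curl}(\chi_n\Psi)$, checking that the leading homogeneous part is nonvanishing. As a fallback, the Weyl sequence can be constructed directly for $\mathcal{D}$ by Fourier methods, and $\sigma_{ap}$ then transfers to $L_{ss}$ because $Q$ is relatively compact. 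This is the compactness used in the preceding lemma, which yields $\sigma_{ess}(L_{ss}) = \sigma_{ess}(\mathcal{D})$ by Weyl's theorem, together with the known fact that $\sigma(\mathcal{D})$ on $\mathrm{L}^q$ is the half-plane $\Re\lambda\le\frac12-\frac{3}{2q}$, whose boundary is in the essential spectrum.
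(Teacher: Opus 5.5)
Your argument is correct, but it takes a different route from the paper. The paper argues abstractly. It conjugates $L_{ss}$ by the $\mathrm{L}^q$-isometric dilations $(I_sU)(x)=s^{-3/q}U(x/s)$ and observes that $I_s^{-1}L_{ss}I_s$ converges, on the core of compactly supported divergence-free fields, to the pure dilation generator $\frac12+\frac{x}{2}\cdot\nabla$. The Laplacian picks up a factor $s^{-2}$, and the $\overline U$-terms vanish by the decay of $\overline U$ and $\nabla\overline U$. Trotter--Kato upgrades this to strong convergence of the semigroups. The paper then quotes an abstract spectral-inclusion result for such conjugated limits, together with the known spectrum of the dilation generator, which is exactly the line $\Re\lambda=\frac12-\frac{3}{2q}$.

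Your truncated homogeneous fields $V_n=\mathrm{curl}(\chi_n\Psi)$ on $R_n\le|\xi|\le R_n^2$ are what that machinery produces implicitly: approximate eigenvectors of the dilation generator pushed to infinity, where $\Delta$ and $Q$ are negligible. Writing them down buys three things:
\begin{itemize}
\item The proof is self-contained.
\item It gives the stronger conclusion that the line lies in $\sigma_{ap}(L_{ss})$. Indeed it lies in the essential spectrum if $R_{n+1}>R_n^2$, so that $V_n/\|V_n\|_q\rightharpoonup 0$.
\item It only needs $\overline U\in \mathrm{L}^\infty$ and $\nabla\overline U(\xi)\to 0$ as $|\xi|\to\infty$, since $\|\overline U\cdot\nabla V_n\|_q\lesssim\|\overline U\|_\infty R_n^{-1}$ and $\|V_n\cdot\nabla\overline U\|_q\le\sup_{|\xi|\ge R_n}|\nabla\overline U|\,\|V_n\|_q$.
\end{itemize}
The cleanest way to handle the cut-off is the identity $(\frac12+\frac{\xi}{2}\cdot\nabla)\mathrm{curl}\,F=\mathrm{curl}(\frac{\xi}{2}\cdot\nabla F)$. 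With $\lambda_0=\frac{1-a}{2}$ it gives exactly $(\lambda_0-\frac12-\frac{\xi}{2}\cdot\nabla)V_n=-\frac{1}{2\log R_n}\,\mathrm{curl}\bigl(\chi'(\log|\xi|/\log R_n)\Psi\bigr)$. The paper's route is shorter and makes the structural point explicit: the large-scale limit of $L_{ss}$ is the dilation group. The price is that it outsources the key step.

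A few small corrections:
\begin{itemize}
\item Your choice $\Psi=|\xi|^{1-a}e_3$ has $\mathrm{curl}\,\Psi=(1-a)|\xi|^{-1-a}(\xi_2,-\xi_1,0)$. This vanishes when $a=1$, i.e. $q=3$ and $\lambda_0=0$. Taking $\Psi=\xi_1|\xi|^{-a}e_3$ avoids this.
\item Your semigroup formula misplaces the heat time. It should read $e^{\tau/2}\bigl(e^{(e^{\tau}-1)\Delta}V\bigr)(e^{\tau/2}\xi)$, though you never use it.
\item The remark that the long annulus explains why the spectrum is a line "rather than a half-plane" is inaccurate. As your own fallback says, $\sigma(\mathcal{D})$ is not just this line: $e^{\Delta}(|x|^{-a}\Phi)$ is an $\mathrm{L}^q$ eigenfunction whenever $3/q<\Re a<3$. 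Your construction only reaches the boundary line, which is all the proposition asks for.
\end{itemize}
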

\begin{proof}
By Proposition \ref{prop:Lss} the operator $L_{ss}$ generates a strongly continuous semigroup $(T(\tau))_{\tau \in \mathbb{R}_+}$ on $\mathrm{L}^2_{df}(\mathbb{R}^3)$. We now proceed with an argument from \cite{DS} and used for Ornstein-Uhlenbeck operators in \cite{M}. 
Consider the the following isometries on $L^q(\mathbb{R}^3)$:
    $$(I_s U)(x) := s^{-3/q} U(s^{-1}x). $$
    We observe that for any $U\in C^\infty_c(\mathbb{R}^3)$
    $$I_s^{-1}\tilde{\mathcal{L}}I_s U=\mathbb{P}\left( \frac{U}{2} + \frac{x}{2}\cdot\nabla U + s^{-2}\Delta U -s^{-1}\overline{U}(sx) \cdot\nabla U(x)-U(x)\cdot(\nabla\overline{U})(sx)\right).$$
    By the results in \cite{JS2}, we know that for any isotropically decaying self-similar solution, $\overline{U}$ decays as $1/|x|$ and $\nabla\overline{U}$ decays as $1/|x|^2$ as $|x|\to \infty$. It follows (by continuity of $\mathbb{P}$ on $L^q(\mathbb{R}^3$)) that 
    $$\lim_{s\to \infty} I_s^{-1}\tilde{\mathcal{L}}I_s U=\mathbb{P}\left( \frac{U}{2} + \frac{x}{2}\cdot\nabla U \right),$$
    strongly in $L^2(\mathbb{R}^3$) for any $U\in C^\infty_c(\mathbb{R}^3)$ with $\textrm{div } U = 0$. 
    By the Trotter-Kato theorem, and since $\{ U \in C^\infty_c(\mathbb{R}^3,\mathbb{R}^3) \colon \textrm{div } U = 0 \}$ is a core, we conclude that the semigroups $I_s^{-1}T(\tau) I_s$ converge strongly to the semigroup $S(\tau)$ generated by the drift operator $DU:= \frac{U}{2} + \frac{x}{2}\cdot\nabla U$ with its natural domain on $L^2(\mathbb{R}^3$. Here, we used that $D$ commutes with $\mathbb{P}$. By Corollary 13 in \cite{DS}, we conclude that
    $\sigma(D) \subset \sigma(L_{ss})$.
    \cite[Theorem 2.3]{M} implies that $\sigma(D) = \{\lambda \in \mathbb{C} : \textrm{Re}(\lambda) = \frac{1}{2} - \frac{3}{2q} \},$ (cf. Section 2 in \cite{M}) which finishes the proof of the proposition.
\end{proof}

Now we show that the convergence assumption from Lemma \ref{lem:singular limit} is satisfied if one assumes the isotropic decay condition.

\begin{theorem}
    \label{thm:non-uniquness-vor}
    Suppose the background is isotropically decaying.
    Assume that there exists an unstable eigenvalue $\lambda_\infty \in \sigma_p(\tilde{L}_{\mathrm{vor},\infty}) \cap \{ \mu \in \mathbb{C} \colon \Re \mu > \tilde{\eta} \}$, for $\tilde{\eta}$ from Lemma~\ref{lem:semigroup properties},
    then there exists two distinct solutions to the incompressible Navier-Stokes equations with scaling-invariant initial data $u_0(x)=|x|^{-1}\cdot v(\tfrac{x}{|x|})$.  
\end{theorem}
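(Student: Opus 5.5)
The plan is to verify the resolvent-convergence hypothesis of Lemma~\ref{lem:singular limit} in vorticity form and then chain the earlier results together. We write
\begin{equation*}
\tilde{L}_{\mathrm{vor},\beta} = A_\beta + K, \qquad \tilde{L}_{\mathrm{vor},\infty} = A_\infty + K,
\end{equation*}
with $A_\beta$, $A_\infty$ as in Lemma~\ref{lem:semigroup properties}. For $\Re\lambda > \tilde{\eta}$, Lemma~\ref{lem:semigroup properties} gives $\lambda \in \rho(A_\beta)\cap\rho(A_\infty)$, the uniform bound $\|R(\lambda,A_\beta)\|\le (\Re\lambda-\tilde\eta)^{-1}$, and strong convergence $R(\lambda,A_\beta)\to R(\lambda,A_\infty)$ locally uniformly. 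We then use the factorization
\begin{equation*}
\lambda - A_\beta - K = \bigl(I - K R(\lambda,A_\beta)\bigr)(\lambda - A_\beta),
\end{equation*}
so that
\begin{equation*}
R(\lambda,\tilde L_{\mathrm{vor},\beta}) = R(\lambda,A_\beta)\bigl(I - K R(\lambda,A_\beta)\bigr)^{-1}
\end{equation*}
whenever the inverse exists.

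The key point is that $K$ is compact by Lemma~\ref{lem:K compact}, using isotropic decay. A compact operator composed after a strongly convergent, uniformly bounded family gives norm convergence once the product is taken in the right order. We therefore use the dual factorization $(\lambda - A_\beta)(I - R(\lambda,A_\beta)K)$. Since $K$ is compact and $R(\lambda,A_\beta)\to R(\lambda,A_\infty)$ strongly with uniform bounds, we get
\begin{equation*}
R(\lambda,A_\beta)K \to R(\lambda,A_\infty)K
\end{equation*}
in operator norm, locally uniformly in $\lambda$ on $\{\Re\mu>\tilde\eta\}$. This follows because strong convergence is uniform on the precompact image of the unit ball under $K$, and equicontinuity in $\lambda$ comes from the resolvent identity.

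For $\lambda$ in a compact set $\mathcal K\subset\rho(\tilde L_{\mathrm{vor},\infty})\cap\{\Re\mu>\tilde\eta\}$, the operator $I - R(\lambda,A_\infty)K$ is invertible by Fredholm theory, since it is injective exactly when $\lambda\notin\sigma_p(\tilde L_{\mathrm{vor},\infty})$. By norm convergence, $I - R(\lambda,A_\beta)K$ is then invertible for $\beta$ large, uniformly on $\mathcal K$. This yields
\begin{equation*}
R(\lambda,\tilde L_{\mathrm{vor},\beta}) = \bigl(I-R(\lambda,A_\beta)K\bigr)^{-1}R(\lambda,A_\beta) \to R(\lambda,\tilde L_{\mathrm{vor},\infty})
\end{equation*}
strongly and uniformly on $\mathcal K$. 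Moreover, on $\{\Re\mu>\tilde\eta\}$ the spectrum of $\tilde L_{\mathrm{vor},\infty}$ consists of isolated eigenvalues of finite multiplicity by the analytic Fredholm theorem. Hence $\lambda_\infty$ is isolated, and a small circle around it lies in such a $\mathcal K$. This is the hypothesis of Lemma~\ref{lem:singular limit}, with the minor adjustment of working in $\{\Re\mu>\tilde\eta\}$ rather than $\{\Re\mu>0\}$.

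Next we apply Lemma~\ref{lem:singular limit}. Following \cite[Section VIII.1.4]{Kat:66}, the Riesz projections converge, which is where the uniform convergence on the circle and the nonzero rank of the limiting projection are used. This produces eigenvalues $\tilde\lambda_\beta\to\lambda_\infty$ of $\tilde L_{\mathrm{vor},\beta}$. For $\beta$ large, $\lambda_\beta=\beta\tilde\lambda_\beta$ has $\Re\lambda_\beta>\tfrac12$, so Lemma~\ref{lem:vor-vel} transfers it to an unstable eigenvalue of $L_{\mathrm{vel},\beta}$. Homotheticity guarantees that $\beta\bar U$ is a genuine self-similar profile, so $L_{ss}=L_{\mathrm{vel},\beta}$ is the true linearization.

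We then set $U_{\mathrm{lin}}=\Re(e^{\lambda_\beta\tau}\eta)$, which is a nonzero ancient solution decaying as $\tau\to-\infty$. Proposition~\ref{prop:Uper} produces $U_{\mathrm{per}}$ and a solution $U\neq\beta\bar U$. Undoing the self-similar variables gives two distinct solutions $u\neq\bar u$ with the same data $\beta u_0$. Because the Duhamel part decays faster than $U_{\mathrm{lin}}$, both solutions attain that data.

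The main obstacle is the norm-convergence step for $R(\lambda,A_\beta)K$ when $\lambda$ is allowed to vary uniformly over $\mathcal K$. The other delicate point is a possible mismatch of spaces: $K$ is compact on $\mathrm{L}^q$, while the eigenproblem lives on $Y=\mathrm{L}^2\cap\mathrm{L}^4$. I would handle the latter by running the argument on $Y$, using the compactness of $K$ on each $\mathrm{L}^q$ separately and taking $\tilde\eta$ as the maximum over $q=2,4$.
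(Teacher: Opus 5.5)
Your proposal is correct and follows the paper's own argument. The steps match: the factorization $\lambda-A_\beta-K=(\lambda-A_\beta)\bigl(I-R(\lambda,A_\beta)K\bigr)$, then operator-norm convergence of $R(\lambda,A_\beta)K$ from the compactness of $K$, then Lemma~\ref{lem:singular limit}, Lemma~\ref{lem:vor-vel} and Proposition~\ref{prop:Uper}. For the norm convergence the paper approximates $K$ by finite-rank operators $F_n$, while you use uniform convergence of a bounded, strongly convergent family on the precompact set $K(\text{unit ball})$; these amount to the same thing.

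You take the compact sets inside $\rho(\tilde{L}_{\mathrm{vor},\infty})$ rather than $\rho(A_\infty)$, as the paper writes; the latter is automatic for $\Re\mu>\tilde\eta$. Your condition is exactly what makes $I-R(\lambda,A_\infty)K$ invertible, so this is a small correction, not a change of route.
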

\begin{proof}
    It suffices to verify the assumptions of Lemma \ref{lem:singular limit}.
    Note that 
    \begin{equation*}
        \lambda - L_{\mathrm{vor},\beta} =	(\lambda-A_{\beta})(1-R(\lambda,A_{\beta}K))
    \end{equation*}
    and we know by Lemma~\ref{lem:K compact} that $K$ is compact. Therefore it can be approximated by operators $F_n$ of finite rank. We obtain from Lemma~\ref{lem:semigroup properties} that 
    \begin{equation*}
        R(\lambda,A_\beta) F_n \to R(\lambda,A_\infty) F_n \qquad \textrm{in } \mathcal{L}(X)
    \end{equation*}
    uniformly in $\lambda$ on compact sets $B \subset \{ \mu \in \mathbb{C} \colon \Re \mu > \tilde{\eta} \}\cap \rho(A_\infty)$. 
    Passing to the limit $n \to \infty$ implies
    \begin{equation*}
        R(\lambda,A_\beta) K \to R(\lambda,A_\infty) K \qquad \textrm{in } \mathcal{L}(X)
    \end{equation*}
    uniformly in $\lambda$ on compact sets $B \subset \{ \mu \in \mathbb{C} \colon \Re \mu > \tilde{\eta} \}\cap\rho(A_\infty)$. 
    Hence, for $\beta > 0$ sufficiently large, we see that $1-R(\lambda,A_\beta)K$ is a small perturbation of the invertible operator $1-R(\lambda,A_\infty)K$ and therefore invertible. This implies $\lambda \in \rho(\tilde{L}_{\mathrm{vor},\beta})$ and
    \begin{equation*}
        R(\lambda,\tilde{L}_{\mathrm{vor},\beta}) \Omega = (1-R(\lambda,A_{\beta}K))^{-1} R(\lambda,A_\beta) \Omega 
        \to (1-R(\lambda,A_{\infty}K))^{-1} R(\lambda,A_\infty) \Omega  
    \end{equation*}
    uniformly in $\lambda$ on compact sets $B \subset \{ \mu \in \mathbb{C} \colon \Re \mu > \tilde{\eta} \}\cap \rho(A_\infty)$. 
    Now, Lemma \ref{lem:singular limit} implies the existence of an unstable eigenvalue of $L_{ss}$ by choosing $\beta$ sufficiently large. Hence, we obtain an ancient, decaying $U_{\mathrm{lin}]} \not = 0$ and therefore by Proposition~\ref{prop:Uper} the existence of two distinct solutions the incompressible Navier-Stokes equations arising from the same scaling-invariant initial data $u_0(x)=|x|^{-1}\cdot v(\tfrac{x}{|x|})$. 
\end{proof}

Using the localization technique from Jia-Šverák 
this result implies a criterion for the non-uniqueness of Leray-Hopf solutions. 

\begin{corollary}\label{cor:non-uniquness LH-vor}
    Suppose the background is isotropically decaying.
    Assume that there exists an unstable eigenvalue $\lambda_\infty \in \sigma_p(\tilde{L}_{\mathrm{vor},\infty}) \cap \{ \mu \in \mathbb{C} \colon \Re \mu > \tilde{\eta} \}$, for $\tilde{\eta}$ from Lemma~\ref{lem:semigroup properties},
    then there exists two distinct Leray-Hopf solutions to the incompressible three-dimensional Navier-Stokes equations. 
\end{corollary}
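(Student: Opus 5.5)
The corollary follows from Theorem~\ref{thm:non-uniquness-vor} combined with the localization (truncation) argument of Jia and Šverák \cite{JS1}. Theorem~\ref{thm:non-uniquness-vor} gives two distinct solutions $\bar u$ and $u$ from the same scaling-invariant data $u_0(x)=|x|^{-1}v(x/|x|)$. The data lies only in $L^{3,\infty}$, not $L^2$, so these are not Leray--Hopf solutions. The plan is to truncate the data at large scales and show the non-uniqueness survives.

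\textbf{Step 1 (decay of the two solutions).} Write $u=\bar u+w$, where in self-similar variables $w$ corresponds to $U_{\mathrm{lin}}+U_{\mathrm{per}}$. By Proposition~\ref{prop:Uper}, $w(\tau)\to0$ exponentially in $H^N$ as $\tau\to-\infty$, with rate $e^{\Re\lambda_\beta\tau}$ and $\Re\lambda_\beta>1/2$ (Lemma~\ref{lem:singular limit}). Undoing the variables, $\|w(t)\|_{L^2(\mathbb{R}^3)}\lesssim t^{\Re\lambda_\beta-1/2+3/4}$, which tends to $0$ as $t\to0$. Since $\bar U$ is isotropically decaying, $\bar u(t)-e^{t\Delta}u_0$ is also locally controlled in $L^2$ near $t=0$ by the Jia--Šverák estimates \cite{JS2}.

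\textbf{Step 2 (truncated data).} Take $u_0^\varepsilon := $ a divergence-free cutoff of $u_0$ to the ball $B_{1/\varepsilon}$, using a Bogovskii correction on the annulus. Then $u_0^\varepsilon\in L^2_{df}$, because $|u_0|^2\sim|x|^{-2}$ is locally integrable in three dimensions. Let $a=u_0^\varepsilon-u_0$, which is supported away from the origin and smooth there. Following \cite[Section 5]{JS1}, construct two solutions of the form $u^{(i)}=\tilde u^{(i)}+ v^{(i)}$ with $\tilde u^{(1)}=\bar u$ and $\tilde u^{(2)}=u$, where $v^{(i)}$ solves the perturbed Navier--Stokes system around $\tilde u^{(i)}$ with data $a$ on a short time interval $[0,T]$. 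The key point is that $a$ vanishes near the origin, where the singular behavior of $\tilde u^{(i)}$ lives, while on the support of $a$ the background is smooth and small, of size $\lesssim\varepsilon$ in $L^\infty$. Hence $v^{(i)}$ exists by a fixed-point argument in an energy class. After $t=T$, each $u^{(i)}(T)\in L^2$, and we extend by any Leray--Hopf solution.

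\textbf{Step 3 (Leray--Hopf property and distinctness).} We must check that $u^{(i)}\in L^\infty L^2\cap L^2\dot H^1$, that both satisfy the strong energy inequality, and that both attain the data in $L^2$. Local energy is fine because the singular part is locally $L^2$ and the perturbations are smooth. The hardest part is the energy inequality near $t=0$. There we use Step 1: the difference $w$ vanishes in $L^2$ at a rate beating the scaling, so the cross terms $\int w\cdot\nabla\bar u\cdot w$ are integrable in time. Finally, $u^{(1)}\neq u^{(2)}$ for small $t$. Their difference is $w$ plus the difference of the $v^{(i)}$, and the latter is of size $\varepsilon$ times a factor carrying the same exponential structure. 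Scaling it against $w$, which is nonzero at self-similar times $\tau\approx\log t$ where $\|w\|$ is of order $t^{\Re\lambda_\beta-1/2+3/4}$, keeps the two solutions distinct on a window before $\varepsilon$-effects enter, namely for $t\ll\varepsilon^2$.

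\textbf{Main obstacle.} I expect the main obstacle to be Step 2/3: controlling the perturbation $v^{(i)}$ uniformly against the singular background $\tilde u^{(i)}$. The background behaves like $|x|^{-1}$ near the origin, so the term $v\cdot\nabla\tilde u$ is critical. I would handle it exactly as \cite{JS1} does, using Hardy's inequality with the smallness coming from the spatial separation of $\operatorname{supp}a$ from the origin. This is the point where isotropic decay, rather than the anisotropic kind, is genuinely needed.
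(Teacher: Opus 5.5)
\textbf{Verdict: same reduction as the paper, but the details you add are wrong.} Your reduction matches the paper. Its proof of this corollary is only Theorem~\ref{thm:non-uniquness-vor} plus a pointer to the truncation argument in \cite[Section 5]{JS1}. The problem is that the mechanism you write down for the truncation step would not work, so your sketch does not stand in for that citation.

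\textbf{Construction of $v^{(i)}$.} First, $a=u_0^\varepsilon-u_0$ coincides with $-u_0$ outside $B_{2/\varepsilon}$. It therefore decays only like $|x|^{-1}$ and is not in $L^2(\mathbb{R}^3)$, so there is no fixed point for $v^{(i)}$ ``in an energy class''. Since neither $\tilde u^{(i)}$ nor $v^{(i)}$ is square integrable at infinity, the Leray--Hopf property of $u^{(i)}$ needs their tails to cancel (e.g.\ $u^{(i)}-e^{t\Delta}u_0^\varepsilon\in L^\infty_tL^2_x$). Your Step~3 (``local energy is fine'') does not address this. Second, and more seriously, the smallness you invoke is not available. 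The perturbation $v^{(i)}(t)$ does not stay supported on $\mathrm{supp}\,a$, and the terms $\tilde u^{(i)}\cdot\nabla v+v\cdot\nabla\tilde u^{(i)}$ are scaling-critical with a large coefficient. The Hardy bound $|\int(v\cdot\nabla v)\cdot\tilde u\,dx|\le 2\||x|\tilde u\|_{L^\infty}\|\nabla v\|_{L^2}^2$ can be absorbed only if $\||x|\tilde u\|_{L^\infty}<1/2$. Here the background is $\beta_0\bar u$ with $\beta_0$ taken large in Lemma~\ref{lem:singular limit}, so this quantity is of order $\beta_0$. The separation of $\mathrm{supp}\,a$ from the origin must be turned into decay, as $\tau\to-\infty$, of the self-similar rescaling of the truncation error. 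That decay then has to be propagated with the linear theory of $L_{ss}$ (Proposition~\ref{prop:Lss}), unstable spectrum included. This is exactly the part that must come from \cite{JS1}, and your sketch does not reproduce it.

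\textbf{Distinctness.} This step is asserted rather than proved. Through the nonlocal pressure, the truncation changes the solution near the origin immediately. The change is in general only polynomially small in $t$, with a power independent of $\beta_0$, whereas $\Re\lambda_{\beta_0}=\beta_0\Re\tilde\lambda_{\beta_0}$ is large. So there is no window in which ``$\varepsilon$-effects'' are negligible next to $w$. (Also, with a cutoff at radius $1/\varepsilon$ the parabolic time to reach the origin is $\varepsilon^{-2}$, not $\varepsilon^{2}$.) Everything therefore hinges on $v^{(2)}-v^{(1)}=o(w)$ as $t\to0$, and this cannot be read off from two separate constructions. The linearization around the background has the ancient solution $\Re(e^{\lambda_{\beta_0}\tau}\eta)$, so solutions of the perturbation problem are not determined by their data; already for zero data both $0$ and $w$ are solutions. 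Hence $v^{(2)}-v^{(1)}$ may contain exactly the mode that $w$ consists of. You need a construction that fixes this unstable coefficient, rather than a comparison of sizes. One option is to build the second truncated solution as a $U_{\mathrm{lin}}+U_{\mathrm{per}}$ perturbation of the first truncated solution in self-similar variables.
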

\begin{proof}
    See \cite[Section 5]{JS1} for the truncation argument.
\end{proof}

We obtain the same result in velocity form.

\begin{theorem}
    \label{thm:non-uniquness-vel}
    Suppose the background is isotropically decaying.
   Assume that there exists an unstable eigenvalue $\lambda_\infty \in \sigma_p(\tilde{L}_{\mathrm{vel},\infty}) \cap \{ \mu \in \mathbb{C} \colon \Re \mu > \tilde{\eta} \}$,
    then there exists two distinct solutions to the incompressible Navier-Stokes equations with scaling-invariant initial data $u_0(x)=|x|^{-1}\cdot v(\tfrac{x}{|x|})$.  
\end{theorem}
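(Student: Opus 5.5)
The plan is to reduce to Theorem~\ref{thm:non-uniquness-vor}, using the velocity--vorticity correspondence for the limiting Euler operator. Let $\lambda_\infty \in \sigma_p(\tilde{L}_{\mathrm{vel},\infty})$ with $\Re \lambda_\infty > \tilde{\eta}$, and let $V \in X$, $V\neq 0$, be an eigenfunction:
\[
\lambda_\infty V = -\mathbb{P}(\bar{U}\cdot\nabla V + V\cdot\nabla \bar{U}).
\]
Set $\Omega := \mathrm{curl}\, V$. We want to show that $\Omega\neq 0$, that $\Omega \in Y$, and that $\lambda_\infty \Omega = \tilde{L}_{\mathrm{vor},\infty}\Omega$. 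Then $\lambda_\infty \in \sigma_p(\tilde{L}_{\mathrm{vor},\infty})$, and Theorem~\ref{thm:non-uniquness-vor} yields the two distinct solutions.

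\textbf{Step 1: the eigenfunction is nonzero in vorticity form.} Since $V$ is divergence free and lies in $L^2\cap L^4$, the vanishing of $\mathrm{curl}\, V$ would make $V$ harmonic. A harmonic function in $L^2$ is zero, so $\Omega\neq 0$.

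\textbf{Step 2: regularity of $V$.} This is the main obstacle. The Euler eigenvalue problem is not elliptic, so $V\in D(\tilde{L}_{\mathrm{vel},\infty})$ only gives $\bar{U}\cdot\nabla V\in L^2\cap L^4$ modulo gradients, not $\nabla V \in L^2\cap L^4$. I would work distributionally. Taking the curl of the equation kills the Leray projection, and the identity $\mathrm{curl}(\bar U\cdot\nabla V + V\cdot\nabla\bar U) = [\bar U,\Omega]+[V,\bar\Omega]$, valid for divergence-free fields, gives
\[
\lambda_\infty\Omega = -[\bar{U},\Omega]-[V,\bar{\Omega}]
\]
in $\mathcal{D}'$. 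To upgrade this to $\Omega\in Y$, I would use the resolvent structure of Lemma~\ref{lem:semigroup properties}. There $A_\infty = M+S$ generates a quasi-contractive semigroup, and the growth bound $\tilde\eta$ holds equally in the velocity setting, because $-\mathbb{P}(\bar U\cdot\nabla\,\cdot)$ is skew-adjoint and $V\mapsto\mathbb{P}(V\cdot\nabla\bar U)$ is bounded. Since $\Re\lambda_\infty>\tilde{\eta}$, we can invert $\lambda_\infty - M - S$. This lets us write $\Omega = R(\lambda_\infty, A_\infty)K\Omega$, where $K\Omega = -(V\cdot\nabla)\bar\Omega$ with $V=\mathrm{BS}(\Omega)$. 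The isotropic decay of $\bar{U}$ gives $\nabla\bar\Omega \in L^{1,\infty}\cap L^\infty$, so $V\cdot\nabla\bar\Omega \in L^2\cap L^4$ because $V\in L^2\cap L^4$. Hence $K\Omega\in Y$, and therefore $\Omega\in Y$ with $\bar U\cdot\nabla \Omega\in Y$, which is membership in $D(\tilde L_{\mathrm{vor},\infty})$. To make this rigorous one must justify applying the resolvent to a distributional solution; I would do this by mollifying, commuting the mollifier with the transport term (a DiPerna--Lions commutator estimate, using that $\bar U$ is smooth with bounded derivatives), and passing to the limit.

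\textbf{Step 3: conclusion.} With $\Omega\in D(\tilde{L}_{\mathrm{vor},\infty})\setminus\{0\}$ satisfying the vorticity eigenvalue equation, $\lambda_\infty$ is an unstable eigenvalue of $\tilde L_{\mathrm{vor},\infty}$ with $\Re\lambda_\infty > \tilde\eta$. Theorem~\ref{thm:non-uniquness-vor} then applies verbatim. Alternatively, one could rerun its proof directly in velocity form, with the compact operator $V\mapsto -\mathbb{P}(V\cdot\nabla\bar U)$ playing the role of $K$, compact by the same argument as Lemma~\ref{lem:K compact}, and conclude via Lemma~\ref{lem:singular limit} and Proposition~\ref{prop:Uper}.
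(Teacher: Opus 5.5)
Your route differs from the paper's. The paper's proof is only the remark that one can rerun the resolvent argument of Theorem~\ref{thm:non-uniquness-vor} in velocity variables. Your version has a genuine gap at Step~2.

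\textbf{The gap in Step~2.} The identity $\Omega=R(\lambda_\infty,A_\infty)K\Omega$ follows from $(\lambda_\infty-A_\infty)\Omega=K\Omega$ only if you already know $\Omega\in D(A_\infty)\subset Y$. That is exactly what Step~2 is meant to prove. A priori $\Omega=\mathrm{curl}\,V$ lies only in $H^{-1}\cap W^{-1,4}$. Membership in $D(\tilde L_{\mathrm{vel},\infty})$ controls $\bar U\cdot\nabla V$, i.e.\ derivatives along streamlines, and nothing transversal. In Lemma~\ref{lem:vor-vel} this direction was free because $D(L_{\mathrm{vel},\beta})\subset W^{2,2}\cap W^{2,4}$, thanks to the Laplacian, which is absent at $\beta=\infty$.

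Mollification does not repair this. The Friedrichs/DiPerna--Lions commutator $[\rho_\epsilon *,\bar U\cdot\nabla]f$ tends to $0$ in $L^p$ only when $f\in L^p$. For $f=\Omega\in H^{-1}$ it is merely $O(\epsilon^{-1})$ in $L^2$, so the resolvent bound on $Y$ gives no uniform control of $\Omega_\epsilon$. What you would need is uniqueness for $(\lambda_\infty-A_\infty)Z=0$ in a negative Sobolev space, i.e.\ resolvent bounds on $H^{-1}$ (or for the adjoint on $H^1$). The growth bound there is not controlled by $\tilde\eta$: commuting a derivative through the transport adds roughly $\|\nabla\bar U\|_{L^\infty}$ to the growth rate.

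Your fallback fails as well. The map $V\mapsto-\mathbb{P}(V\cdot\nabla\bar U)$ is zeroth order with no smoothing, and it is not compact. Divergence-free high-frequency wave packets $V_n\rightharpoonup 0$, localized where $\nabla\bar U\neq 0$ and suitably polarized, keep $\|\mathbb{P}(V_n\cdot\nabla\bar U)\|$ bounded below. Lemma~\ref{lem:K compact} rests on the derivative gained by Biot--Savart, which has no velocity analogue.

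\textbf{What settles the statement.} You remarked in passing that the velocity operator on $L^2_{\mathrm{df}}$ is skew plus bounded; pushing this to the end closes the argument. Pairing the eigenvalue equation with $V$ gives $\Re\lambda_\infty\|V\|_{L^2}^2=-\Re\langle V\cdot\nabla\bar U,V\rangle\le\|\nabla\bar U\|_{L^\infty}\|V\|_{L^2}^2$. This is rigorous. The term $(I-\mathbb{P})(\bar U\cdot\nabla V)=\nabla\Delta^{-1}\partial_j\big((\partial_i\bar U_j)V_i\big)$ is zeroth order in $V$, so $\bar U\cdot\nabla V\in L^2$. Friedrichs' lemma, legitimate here since $V\in L^2$, then gives $\Re\langle\bar U\cdot\nabla V,V\rangle=0$.

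On the other hand, test $S$ (in any $L^q$) on wave packets concentrating at a point $x_0$. Choose the polarization $e$ to maximize $|(e\cdot\nabla)\bar U(x_0)|$ and the frequency direction orthogonal to $\bar\Omega(x_0)$, so that $\bar\Omega\cdot\nabla\mathrm{BS}(\Omega_n)$ is negligible. This shows $\eta=\|S\|\ge\|\nabla\bar U\|_{L^\infty}$, hence $\Re\lambda_\infty\le\eta\le\tilde\eta$. So $\sigma_p(\tilde L_{\mathrm{vel},\infty})$ never meets $\{\Re\mu>\tilde\eta\}$. The statement holds vacuously, and no velocity-to-vorticity regularity upgrade is needed.
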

\begin{proof}
    Essentially the same proof as Theorem~\ref{thm:non-uniquness-vor}.
\end{proof}

Using the localization technique from Jia-Šverák 
this result implies a criterion for the non-uniqueness of Leray-Hopf solutions. 

\begin{corollary}\label{cor:non-uniquness LH-vel}
    Suppose the background is isotropically decaying.
    Assume that there exists an unstable eigenvalue $\lambda_\infty \in \sigma_p(\tilde{L}_{\mathrm{vel},\infty}) \cap \{ \mu \in \mathbb{C} \colon \Re \mu > \tilde{\eta} \}$,
    then there exists two distinct Leray-Hopf solutions to the incompressible three-dimensional Navier-Stokes equations. 
\end{corollary}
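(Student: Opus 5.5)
The plan is to reduce Corollary~\ref{cor:non-uniquness LH-vel} to Theorem~\ref{thm:non-uniquness-vel} and then apply the Jia--Šverák localization argument, exactly as Corollary~\ref{cor:non-uniquness LH-vor} follows from Theorem~\ref{thm:non-uniquness-vor}.

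\textbf{Step 1: the scale-invariant solutions.} Theorem~\ref{thm:non-uniquness-vel} gives two distinct solutions $\bar u$ and $u$ with the same data $u_0=|x|^{-1}v(x/|x|)$. We need their structure in self-similar variables. We have $U=\bar U+U_{\mathrm{lin}}+U_{\mathrm{per}}$. Here $U_{\mathrm{lin}}=\Re(e^{\lambda_\beta\tau}\eta)$, where $\lambda_\beta=\beta\tilde\lambda_\beta$ has real part exceeding $1/2$ (Lemma~\ref{lem:singular limit}) and $\eta\in X$. The correction $U_{\mathrm{per}}\in Z$ satisfies $\|U_{\mathrm{per}}(\tau)\|_{H^N}\lesssim e^{2\Re\lambda_\beta\tau}$ for $\tau<T$. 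This faster decay comes from the quadratic forcing in \eqref{eq:Uper} together with the parabolic bounds of Proposition~\ref{prop:Lss}.

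Undoing the scaling, $\|u(t)-\bar u(t)\|_{L^2}=t^{-1/2+3/4}\|U-\bar U\|_{L^2}\lesssim t^{1/4+\Re\lambda_\beta}\to0$ as $t\to0$. So the difference is a finite-energy, $L^2$-continuous perturbation on $(0,e^T)$. It is nonzero by Proposition~\ref{prop:Uper}, and one checks it satisfies the local energy inequality.

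\textbf{Step 2: truncation.} Fix a cutoff $\phi$ equal to $1$ on $B_1$ and supported in $B_2$. Take the divergence-free truncated data $u_0^{\mathrm{cut}}=\phi u_0+$ a Bogovskii corrector, which lies in $L^2_{df}$ because $|u_0|\lesssim |x|^{-1}$ near the origin. Following \cite[Section 5]{JS1}, construct two Leray--Hopf solutions from this data as $\bar u+w_1$ and $u+w_2$. Each $w_i$ solves the Navier--Stokes equations perturbed around the singular background ($\bar u$, respectively $u$), with $L^2$ data $(\phi-1)u_0+{}$corrector, which vanishes near the origin.

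The isotropic decay of $\bar U$ (hence $|\bar u|\lesssim|x|^{-1}$, $|\nabla\bar U|\lesssim|\xi|^{-2}$) together with $U_{\mathrm{lin}},U_{\mathrm{per}}\in H^N$ makes $\bar u$ and $u$ belong to $L^\infty_tL^{3,\infty}$ with spatially smooth profiles. The energy method of \cite{JS1} then yields $\|w_i(t)\|_{L^2}\lesssim t^{a}$ for some $a>1/4$. This works because the perturbation data are supported away from the singularity, so the source terms $\bar u\cdot\nabla\bar u$ restricted to $|x|\gtrsim1$ are small in energy as $t\to0$. Both candidates are then Leray--Hopf, since each is a finite-energy perturbation of a solution satisfying the energy inequality.

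\textbf{Step 3: distinctness.} The difference of the two solutions is $u-\bar u+w_2-w_1$. It has size comparable to $t^{1/4+\Re\lambda_\beta}$ for small $t$, because $\eta\neq0$ and the $e^{\lambda_\beta\tau}$ mode is genuinely present. The claim is that $w_2-w_1$ is $o(t^{1/4+\Re\lambda_\beta})$, which requires choosing $\Re\lambda_\beta$ smaller than the decay exponent of $w_i$.

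\textbf{Main obstacle.} The main difficulty lies in Step 3. The exponent comparison is the delicate point. Jia--Šverák handle it by showing $w_2-w_1$ decays faster, using a Gronwall-type estimate in which the difference of backgrounds enters only through $U_{\mathrm{lin}}+U_{\mathrm{per}}$, not at leading order. My plan is to take the correction from the truncation to be of order $t^{A}$ with $A$ large, obtained by using the smoothness of the profiles and the fact that the data correction vanishes near $0$. I would then verify that the uniform-in-$t$ estimates of \cite[Section 5]{JS1} carry over unchanged when $L^{3,\infty}$ is replaced by our isotropic-decay hypothesis.
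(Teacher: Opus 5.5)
Your reduction is the same as the paper's. Theorem~\ref{thm:non-uniquness-vel} gives $\bar u\neq u$ with common data $u_0$, and the paper's proof of the corollary is only a pointer to the truncation argument of \cite[Section 5]{JS1}. The problem is that the details you supply for that truncation fail as written.

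In Step 2, the data of $w_i$ is $u_0^{\mathrm{cut}}-u_0=(\phi-1)u_0+{}$corrector. Because the Bogovskii corrector is compactly supported, this equals $-u_0$ for $|x|>2$. Since $|u_0|\sim|x|^{-1}$ and $\int_{|x|>2}|x|^{-2}\,dx=\infty$ in $\mathbb{R}^3$, this data is \emph{not} in $L^2$. So the $w_i$ are infinite-energy corrections, and only the sums $\bar u+w_1$ and $u+w_2$ have $L^2$ data. Consequently:
\begin{itemize}
\item the bound $\|w_i(t)\|_{L^2}\lesssim t^a$ is false, since it would force $w_i(0)=0$;
\item the Leray--Hopf property cannot come from ``a finite-energy perturbation of a solution satisfying the energy inequality'', because neither $w_i$ nor $\bar u$, $u$ has finite energy. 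The global energy inequality has to be proved for the sums themselves.
\end{itemize}

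In Step 3, the closure you propose is unavailable, for two reasons.
\begin{itemize}
\item $\Re\lambda_\beta=\beta\,\Re\tilde\lambda_\beta$ is fixed by the singular limit with $\beta\ge\beta_0$ large. It is not a parameter you can push below some decay exponent of the $w_i$.
\item The truncation correction cannot be made $O(t^A)$ with $A$ large. Near $|x|\sim1$ the cutoff error is governed by $u_0\neq0$, so it is $O(1)$ as $t\to0$.
\end{itemize}
Estimating $w_2-w_1$ through $w_1$ and $w_2$ separately therefore cannot give $o(t^{1/4+\Re\lambda_\beta})$.

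What works is the mechanism you yourself attribute to Jia--\v{S}ver\'ak and then set aside: treat $d=w_2-w_1$ through its own equation. It has zero initial data, and every source term in it contains a factor
\[
u-\bar u=t^{-1/2}(U_{\mathrm{lin}}+U_{\mathrm{per}})(x/\sqrt t,\log t).
\]
This factor is $O(t^{\Re\lambda_\beta})$ in self-similar variables and concentrated at scale $\sqrt t$, where the $w_i$ are small because their data vanish near the origin. This gives $d$ a power of $t$ strictly better than $t^{1/4+\Re\lambda_\beta}$, with no upper restriction on $\Re\lambda_\beta$. That is what keeps the two Leray--Hopf solutions apart.
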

\begin{proof}
    See \cite[Section 5]{JS1}.
\end{proof}

We note that although the singular limit argument and the compactness of $K$ seem to depend on the isotropic decay of the background profile, one may still attempt to construct the ancient solution $U_{lin}$ for an anisotropically decaying background in order to prove non-uniqueness.

\section{Homothetic Self-Similar Solutions}

Let $U$ be a homothetic self-similar solution to the Navier-Stokes Equations. By definition there exists $\beta$ with $|\beta|\neq 1,0$ such that $U$ and $\beta U$ are both self-similar profiles; in particular, both solve Equation \eqref{SSNS}. Henceforth we shall always assume that the corresponding pressure terms $P$ and $P_\beta$ are, at the very least, tempered distributions modulo polynomials (which we denote $\mathcal{S}'/\mathcal{P}$). Then we have the following short lemma:

\begin{lemma} $U$ is a homothetic self-similar solution to the Navier-Stokes equations if and only if there exists a scalar pressure $P$ such that
    \begin{equation}\begin{gathered}
        U\cdot\nabla U + \nabla P=0\\
        \frac{U}{2}+\frac{x}{2}\cdot\nabla U + \Delta U=0\\
        \textrm{div } U=0.
    \end{gathered}\end{equation}
    Moreover, if $U$ is homothetic, then $\beta U$ is a solution for all $\beta \in \mathbb{R}$.
\end{lemma}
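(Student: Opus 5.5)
The plan is to separate the self-similar Navier--Stokes system \eqref{SSNS} into the part that is linear in $U$ and the part that is quadratic in $U$. Write $\mathcal{N}(U) := U\cdot\nabla U$ and $\mathcal{A}(U) := -\Delta U - \frac{U}{2} - \frac{x}{2}\cdot\nabla U$. The profile $U$ with pressure $P$ and the profile $\beta U$ with pressure $P_\beta$ both solve \eqref{SSNS}, which gives
\begin{equation*}
\mathcal{A}(U) + \mathcal{N}(U) + \nabla P = 0, \qquad \beta\mathcal{A}(U) + \beta^2\mathcal{N}(U) + \nabla P_\beta = 0 .
\end{equation*}
Because $\beta\neq 0,1$, the coefficient matrix $\begin{pmatrix}1&1\\ \beta&\beta^2\end{pmatrix}$ has determinant $\beta^2-\beta=\beta(\beta-1)\neq 0$. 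We can therefore solve this system for $\mathcal{A}(U)$ and $\mathcal{N}(U)$ separately, and each comes out as a gradient. Multiplying the first equation by $\beta$ and subtracting the second gives
\begin{equation*}
(\beta-\beta^2)\mathcal{N}(U) = -\nabla(\beta P - P_\beta), \qquad\text{so}\qquad \mathcal{N}(U) + \nabla \tilde P = 0
\end{equation*}
with $\tilde P := (\beta P - P_\beta)/(\beta-\beta^2)$. Substituting back, $\mathcal{A}(U) = \nabla(\tilde P - P) =: \nabla Q$.

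The next step, which I expect to be the main obstacle, is to show that $\nabla Q = 0$. This is where the assumption that pressures are tempered distributions modulo polynomials enters. Taking the divergence of $\mathcal{A}(U)=\nabla Q$ and using $\textrm{div } U=0$ gives $\textrm{div}\,\mathcal{A}(U) = -\frac{1}{2}\textrm{div}\,U - \frac{1}{2}\textrm{div}(x\cdot\nabla U) + 0$. The identity $\textrm{div}(x\cdot\nabla U) = \textrm{div}\, U + x\cdot\nabla(\textrm{div}\, U) = 0$ then yields $\Delta Q = 0$. Since $Q\in\mathcal{S}'/\mathcal{P}$, Liouville's theorem for tempered harmonic distributions shows that $Q$ is a polynomial, which is trivial in $\mathcal{S}'/\mathcal{P}$. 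Hence $\nabla Q$ is the zero element and we obtain $\frac{U}{2}+\frac{x}{2}\cdot\nabla U+\Delta U=0$. If one prefers to work with honest functions, the same argument shows that $\nabla Q$ is a polynomial vector field. The decay or blow-down assumptions on $U$, or simply the convention of identifying pressures modulo polynomials, should let us absorb it. I would add a remark that a polynomial gradient in the linear equation would be inconsistent with the growth of $U$ permitted by the blow-down limit \eqref{eq:blow-down limit}.

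The converse direction is direct. Suppose $U$ satisfies the three displayed equations with pressure $P$. For any $\beta\in\mathbb{R}$, the field $\beta U$ satisfies \eqref{SSNS} with pressure $\beta^2 P$: the linear part vanishes identically because it scales by $\beta$, and the nonlinear part satisfies $\beta^2(U\cdot\nabla U+\nabla P)=0$. In particular $\beta U$ is a solution for some $|\beta|\neq0,1$, so $U$ is homothetic. This same computation proves the final claim that $\beta U$ is a self-similar solution for all $\beta\in\mathbb{R}$, once the forward direction has established the split system.
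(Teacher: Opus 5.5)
Your proof is correct and is essentially the paper's argument with the steps reordered. Your $Q=\tilde P-P=(\beta^2P-P_\beta)/(\beta-\beta^2)$ is, up to a constant factor, exactly the harmonic function $P_\beta-\beta^2P$ that the paper kills with the $\mathcal{S}'/\mathcal{P}$ assumption: the paper takes divergences first and then subtracts, while you subtract first and then take the divergence of the linear part. Your explicit converse and your flag about the residual polynomial gradient are slightly more careful than the paper, which treats both implicitly.
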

\begin{proof}
    By hypothesis we know that there exists $P$ and $P_\beta$ such that 
    $$-\frac{U}{2}-\frac{x}{2}\cdot\nabla U -\Delta U +U\cdot\nabla U + \nabla P=0$$
    and
    $$-\beta\frac{U}{2}-\beta\frac{x}{2}\cdot\nabla U -\beta\Delta U +\beta^2 U\cdot\nabla U + \nabla P_\beta=0.$$
    Let us use the divergence free condition and take the divergence of the second listed equation. This yields
    $$-\Delta P_\beta = \beta^2\textrm{div}^2(U\otimes U) = -\beta^2 \Delta P,$$
    which implies that $P_\beta-\beta^2 P$ is harmonic on $\mathbb{R}^3$. By our assumption that $P,P_\beta \in \mathcal{S}'/\mathcal{P}$, we have that $P_\beta = \beta^2 P$. Now, multiplying the first equation by $\beta$ and subtracting we get:
    $$(\beta^2 -\beta)U\cdot\nabla U +(\beta^2-\beta)\nabla P=0.$$
    Since $|\beta|\neq 1,0$, we get:
    $$U\cdot\nabla U + \nabla P=0.$$
    Therefore $U$ is a solution of the steady Euler equations, which implies the other desired equality. The last claim now becomes evident.
\end{proof}

Our first goal becomes one of constructing solutions of the Ornstein-Uhlenbeck-type equation
\begin{equation}
    \label{OU}
    \begin{gathered}
    \frac{U}{2}+\frac{x}{2}\cdot\nabla U + \Delta U =0\\
    \textrm{div } u =0
    \end{gathered}
\end{equation}
from $(-1)$-homogeneous data $u_0(x)$ at infinity, in the sense that the following blow-down limit holds (at the very least in the pointwise sense):
\begin{equation}
    \label{BLOWDOWN}
    \lim_{R\to \infty} RU(R\cdot x) = u_0(x).
\end{equation}

This corresponds to constructing forward self-similar solutions of the heat equation from $(-1)$-homogeneous data at initial time. We therefore consider the following data:
$$u_0(x)= |x|^{-1} \cdot v(\tfrac{x}{|x|}).$$

Our construction has two steps: from the data we first construct a distribution $a \colon \mathbb{S}^2 \to \mathbb{C}^3$ which captures the angular information of the solution $U$, then we construct $U$ from $a$. What permits this route is the radial symmetry of the Ornstein-Uhlenbeck operator.

\begin{lemma}[From $v$ to $a$]\label{lemma:v to a}
	Any $v \in L^2(\mathbb{S}^2;\mathbb{R}^3)$ can be expanded in terms of the spherical harmonics $Y_{kl}$ as
	\begin{equation*}
		v = v_{\mathrm{even}} + v_{\mathrm{odd}}
		= \sum_{k \, \mathrm{even}} \sum_{l = -k}^k c_{kl} Y_{kl} +
		\sum_{k \, \mathrm{odd}} \sum_{l = -k}^k c_{kl} Y_{kl}
	\end{equation*} 
	with $c_{k,-l} = (-1)^l \overline{c_{kl}}$.
	Let $T$ be the following linear operator defined by its action on the spherical harmonics:
	\begin{equation*}
		T Y_{kl}
		= \begin{cases}
		\frac{(-1)^{\frac{k-1}{2}}}{2\pi i} \frac{k!!}{(k-1)!!}	
		Y_{kl},	 &\text{ if } $k$ \text{ odd,}\\
		\frac{(-1)^{\frac{k}{2}}}{2\pi} \frac{2^{k} (\tfrac{k}{2}!)^2 }{k!}
		Y_{kl},	 &\text{ if } $k$ \text{ even,}
		\end{cases}
	\end{equation*}
    and suppose $a = T v$.
	Then $a \in H^{-2}(\mathbb{S}^2,\mathbb{C}^3)\subset D'(\mathbb{S}^2,\mathbb{C}^3)$.
\end{lemma}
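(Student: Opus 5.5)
The statement is purely about the size of the multipliers defining $T$. The plan is to show that $T$ acts diagonally on spherical harmonics with eigenvalues of modulus $O(k^{1/2})$. It then maps $L^2(\mathbb{S}^2)$ boundedly into $H^{-1/2}(\mathbb{S}^2)$, which embeds continuously into $H^{-2}(\mathbb{S}^2)$.

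\emph{Setup.} Recall that on $\mathbb{S}^2$ the Sobolev norm $\|a\|_{H^s}^2$ is equivalent to $\sum_{k,l}(1+k(k+1))^{s}|\hat a_{kl}|^2$, where $\hat a_{kl}$ are the spherical harmonic coefficients; this follows since $Y_{kl}$ are eigenfunctions of $-\Delta_{\mathbb{S}^2}$ with eigenvalue $k(k+1)$. We apply $T$ componentwise to $v=(v^1,v^2,v^3)$, with $v^j=\sum c^j_{kl}Y_{kl}$ and $\sum_{k,l}|c^j_{kl}|^2=\|v^j\|_{L^2}^2<\infty$ by Parseval. Then $a^j=\sum_{k,l}\tau_k c^j_{kl}Y_{kl}$ as a formal series, where $\tau_k$ is the multiplier from the statement.

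\emph{Main step: bounding $|\tau_k|$.}
\begin{itemize}
\item For odd $k$, use $k!!/(k-1)!!=\prod_{j=1}^{(k-1)/2}\frac{2j+1}{2j}$, a Wallis-type product. Comparing with $\Gamma$ functions, it equals $\Gamma(\tfrac{k}{2}+1)/(\Gamma(\tfrac{k+1}{2})\Gamma(\tfrac32))$ up to a constant. Hence it is $\sim c\sqrt{k}$ by $\Gamma(x+\tfrac12)/\Gamma(x)\sim\sqrt{x}$.
\item For even $k=2m$, the quantity $2^{2m}(m!)^2/(2m)!=4^m/\binom{2m}{m}$ is $\sim\sqrt{\pi m}$ by Stirling, or by the elementary bound $\binom{2m}{m}\ge 4^m/(2\sqrt m)$ for $m\ge 1$.
\end{itemize}
In both cases $|\tau_k|\le C(1+k)^{1/2}$ for all $k$. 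The phases $(-1)^{\cdots}$ and the factors $1/(2\pi i)$, $1/(2\pi)$ are harmless.

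\emph{Conclusion.} Combining,
\[
\|a^j\|_{H^{-2}}^2\lesssim\sum_{k,l}(1+k(k+1))^{-2}\,C^2(1+k)\,|c^j_{kl}|^2\le C'\sum_{k,l}|c^j_{kl}|^2=C'\|v^j\|_{L^2}^2.
\]
So the series defining $a$ converges in $H^{-2}(\mathbb{S}^2;\mathbb{C}^3)$ and $T$ is bounded $L^2\to H^{-2}$. The inclusion $H^{-2}\subset D'$ is standard: pair against smooth test functions, whose coefficients decay rapidly. The symmetry condition $c_{k,-l}=(-1)^l\overline{c_{kl}}$ only encodes that $v$ is real. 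It is not needed for the membership claim, and merely shows that $a$ is real (even part) or purely imaginary (odd part), as the factor $1/i$ suggests.

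The only step with content is the asymptotic bound on the double-factorial ratios. Even there, a crude polynomial bound such as $|\tau_k|\le C(1+k)$ would suffice, since $H^{-2}$ leaves room to spare. One can get that bound trivially from $\frac{2j+1}{2j}\le e^{1/(2j)}$, giving $\prod_{j\le (k-1)/2}\frac{2j+1}{2j}\le e^{\frac12(1+\log k)}\lesssim\sqrt k$, and analogously for even $k$.
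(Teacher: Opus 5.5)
Your proposal is correct and follows essentially the same route as the paper: both proofs rest on the growth bound $|\tau_k|\lesssim k^{1/2}$ for the diagonal multipliers of $T$ on the spherical harmonics. The only difference is presentational. The paper pairs $a$ against a smooth test function, uses merely $|c_{kl}|\le C$, and bounds the pairing by $\|\Delta_{\mathbb{S}^2}\varphi\|_{L^2}$. You instead use the full $\ell^2$ summability of the coefficients to compute the $H^{-2}$ norm directly, which also yields the sharper $L^2\to H^{-1/2}$ bound and explicitly justifies the double-factorial asymptotics that the paper leaves to direct calculation.
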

\begin{proof}
    Since $v \in L^2(\mathbb{S}^2)$, the vector field can be expanded in spherical harmonics with coefficients $c_{kl} = \langle v,Y_{kl} \rangle_{L^2}$. Moreover, Parseval's identity implies
    $c_{kl} \in \ell^2$, and hence in particular $|c_{kl}| \leq C$.
    Let $\varphi \in C^\infty(\mathbb{S}^2)$ be smooth. It can be expanded as $\varphi = \sum_{k=0}^\infty \sum_{l = -k}^k \tilde{c}_{kl} Y_{kl}$. Since $\varphi$ is smooth, we have $\Delta_{\mathbb{S}^2}^m \varphi \in L^2(\mathbb{S}^2)$ for all $m \in \mathbb{N}$ and hence obtain that 
    $k^m(k+1)^m \tilde{c}_{kl} \in \ell^2$. This implies 
    \begin{equation*}
        \langle a, \varphi \rangle 
        = \sum_{k \, \mathrm{even}} \sum_{l = -k}^k \frac{(-1)^{\frac{k}{2}}}{2\pi} \frac{2^{k} (\tfrac{k}{2}!)^2 }{k!} c_{kl} \tilde{c}_{kl} +
		\sum_{k \, \mathrm{odd}} \sum_{l = -k}^k \frac{(-1)^{\frac{k-1}{2}}}{2\pi i} \frac{k!!}{(k-1)!!} c_{kl} \tilde{c}_{kl} .
    \end{equation*}
    Direct calculations show 
    \begin{equation*}
        \left| \frac{(-1)^{\frac{k}{2}}}{2\pi} \frac{2^{k} (\tfrac{k}{2}!)^2 }{k!} c_{kl} \tilde{c}_{kl} \right|
        \leq \frac{C}{k^{\frac{3}{2}}} \cdot | k(k+1) \tilde{c}_{kl}|
    \end{equation*}
    and 
    \begin{equation*}
       \left|\frac{(-1)^{\frac{k-1}{2}}}{2\pi i} \frac{k!!}{(k-1)!!} c_{kl} \tilde{c}_{kl}\right|
        \leq \frac{C}{k^{\frac{3}{2}}} \cdot | k(k+1) \tilde{c}_{kl}|.
    \end{equation*}
    Therefore, we have
    \begin{equation*}
        | \langle a, \varphi \rangle |
        \leq C \cdot \| \Delta_{\mathbb{S}^2} \varphi \|_{L^2(\mathbb{S}^2)}
        = C \cdot \| D^2 \varphi \|_{L^\infty(\mathbb{S}^2)}
    \end{equation*}
    and $a$ is a distribution in $H^{-2}(\mathbb{S}^2,\mathbb{C}^3)$.
\end{proof}

\begin{lemma}[From $a$ to $U$]\label{lemma:a to U}
	Given any distribution $a \in D'(\mathbb{S}^2,\mathbb{C}^3)$ with $a(-\omega) = \overline{a(\omega)}$, we define
	\begin{equation*}
		U(\xi) = U_{\mathrm{even}}(\xi) + U_{\mathrm{odd}}(\xi)
	\end{equation*}	
	where
	\begin{align*}
		U_{\mathrm{even}}(\xi)
		&= \frac{\sqrt{\pi}}{2} \int_{\mathbb{S}^2} e^{-\frac{( \omega \cdot \xi)^2}{4}} a(\omega) \, \mathrm{d} \omega ,
		\\
		U_{\mathrm{odd}}(\xi)
		&= i \int_{\mathbb{S}^2} 
		D(\tfrac{\omega \cdot \xi}{2}) a(\omega) \, \mathrm{d} \omega ,
	\end{align*}
	and where $D(z) := e^{-z^2}\int_0^z e^{t^2} \mathrm{d} t$ is the Dawson function. 
	Then $U \in C^\infty(\mathbb{R}^3,\mathbb{R}^3)$ solves 
	\begin{equation}
		\Delta U + \frac{1}{2} \xi \cdot \nabla U + \frac{1}{2} U = 0 . \label{eq:OU}
	\end{equation} 
	If, in addition, we have $\omega \cdot a(\omega) = 0$, then $\textrm{div } U = 0$. 
\end{lemma}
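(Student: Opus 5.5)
The plan is to verify the equation pointwise for each kernel: for fixed $\omega\in\mathbb{S}^2$, both $\xi\mapsto e^{-(\omega\cdot\xi)^2/4}$ and $\xi\mapsto D(\tfrac{\omega\cdot\xi}{2})$ solve \eqref{eq:OU} as scalar functions. The statement for $U$ then follows by pairing with the distribution $a$ and differentiating under the pairing. Since each kernel depends on $\xi$ only through $s=\omega\cdot\xi$, for $g(\xi)=h(\omega\cdot\xi)$ with $|\omega|=1$ we have
\begin{equation*}
\Delta g + \tfrac{1}{2}\xi\cdot\nabla g + \tfrac{1}{2} g = h''(s) + \tfrac{s}{2}h'(s) + \tfrac12 h(s).
\end{equation*}
So it suffices to check the ODE $h''+\tfrac{s}{2}h'+\tfrac12 h=0$, which can be written as $\bigl(h' + \tfrac{s}{2}h\bigr)'=0$.

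For $h(s)=e^{-s^2/4}$ we get $h'+\tfrac{s}{2}h=0$ directly. For $h(s)=D(s/2)$, the Dawson function satisfies $D'(z)=1-2zD(z)$, so $h'(s)=\tfrac12-\tfrac{s}{2}D(s/2)$ and hence $h'+\tfrac{s}{2}h=\tfrac12$, which is constant. Both kernels are therefore solutions. Moreover, $(\xi,\omega)\mapsto e^{-(\omega\cdot\xi)^2/4}$ and $D(\tfrac{\omega\cdot\xi}{2})$ are smooth, indeed real-analytic, in both variables. For each $\xi$ they are test functions in $\omega$, and all $\xi$-derivatives are smooth in $\omega$ locally uniformly in $\xi$. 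Since a distribution on the compact manifold $\mathbb{S}^2$ has finite order, the standard theorem on differentiating $\langle a,\phi(\xi,\cdot)\rangle$ in a parameter gives $U\in C^\infty$. It also shows that $\Delta$, $\xi\cdot\nabla$ and the identity commute with the pairing, so $U$ solves \eqref{eq:OU} componentwise.

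For realness, apply the symmetry $a(-\omega)=\overline{a(\omega)}$ through the change of variables $\omega\mapsto-\omega$, which preserves the measure on $\mathbb{S}^2$. The even kernel is invariant under this map, so $\overline{U_{\mathrm{even}}}=U_{\mathrm{even}}$. The odd kernel changes sign because $D$ is odd; combined with the factor $i$ whose conjugate is $-i$, this gives $\overline{U_{\mathrm{odd}}}=U_{\mathrm{odd}}$. All of this is interpreted distributionally, using the reflected pairing.

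For the divergence, $\nabla_\xi h(\omega\cdot\xi)=h'(\omega\cdot\xi)\,\omega$, so
\begin{equation*}
\textrm{div}\,U = \int_{\mathbb{S}^2} h'(\omega\cdot\xi)\,\omega\cdot a(\omega)\,\mathrm{d}\omega,
\end{equation*}
and this vanishes when $\omega\cdot a(\omega)=0$. For a distribution, this condition means $\langle a,\phi\,\omega\rangle=0$ for every scalar test function $\phi$, which is well defined because $\omega$ is smooth. The main technical point is precisely the differentiation under a general distributional pairing, but finite order on a compact manifold handles it routinely.
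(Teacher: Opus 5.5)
Your proof is correct and follows the paper's route: you check that each kernel $h(\omega\cdot\xi)$ solves the Ornstein--Uhlenbeck equation, pass the operator through the pairing with $a$, and use $\omega\cdot a=0$ for the divergence. Your execution is somewhat tighter than the paper's in three places: the reduction to $(h'+\tfrac{s}{2}h)'=0$ handles the Dawson kernel cleanly, whereas the paper's displayed identities for $K_{\mathrm{odd}}$ wrongly multiply the terms linear in $\omega\cdot\xi$ by $K_{\mathrm{odd}}$ (harmlessly, since these errors cancel in the sum); you obtain smoothness directly by differentiating a finite-order pairing, where the paper invokes hypoellipticity; and you verify real-valuedness from $a(-\omega)=\overline{a(\omega)}$, which the paper leaves implicit.
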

\begin{proof}
	We first show that \eqref{eq:OU} holds distributionally. Consider the following pairing:
	\begin{equation*}
		U_{\mathrm{even}}(\xi) = \langle a, K_{\mathrm{even}}(\xi,\cdot) \rangle, 
	\end{equation*}
	where 
	\begin{equation*}
		\omega \to K_{\mathrm{even}}
		(\xi,\omega)
		:= \frac{\sqrt{\pi}}{2} 
		e^{-\frac{( \omega \cdot \xi)^2}{4}}
	\end{equation*}
	is smooth on $\mathbb{S}^2$ for each $\xi \in \mathbb{R}^3$ (and is therefore an admissible test function). It follows that the expression is well-defined for all $\xi \in \mathbb{R}^3$. 
	Using 
	\begin{align*}
		\Delta_{\xi} K_{\mathrm{even}}(\xi,\omega)
		&= - \tfrac{1}{2} (1- \tfrac{(\omega \cdot \xi)^2}{2} ) K_{\mathrm{even}}(\xi,\omega), \\
		(\xi \cdot \nabla_{\xi}) K_{\mathrm{even}}(\xi,\omega)
		&= -\tfrac{(\omega \cdot \xi)^2}{2} K_{\mathrm{even}}(\xi,\omega),
	\end{align*}
	we obtain
	\begin{equation*}
		\Delta K_{\mathrm{even}}
		+ \tfrac{1}{2} \xi \cdot \nabla K_{\mathrm{even}} + \tfrac{1}{2} K_{\mathrm{even}}
		= 0
	\end{equation*}
	and therefore
	\begin{equation*}
		\Delta U_{\mathrm{even}}
		+ \tfrac{1}{2} \xi \cdot \nabla U_{\mathrm{even}} + \tfrac{1}{2} U_{\mathrm{even}}
		= \langle a, \Delta K_{\mathrm{even}}
		+ \tfrac{1}{2} \xi \cdot \nabla K_{\mathrm{even}} + \tfrac{1}{2} K_{\mathrm{even}} \rangle = 0.
	\end{equation*}  
	
	\medskip 
	
	For the odd part we argue along the same lines: we have
	\begin{equation*}
		U_{\mathrm{odd}}(\xi) = \langle a, K_{\mathrm{odd}}(\xi,\cdot) \rangle, 
	\end{equation*}
	where 
	\begin{equation*}
		\omega \to K_{\mathrm{odd}}
		(\xi,\omega)
		:= iD\left(\tfrac{\omega\cdot\xi}{2}\right)
	\end{equation*}
	is smooth on $\mathbb{S}^2$ for each $\xi \in \mathbb{R}^3$ (i.e. an admissible test function). The expression is thus well-defined for all $\xi \in \mathbb{R}^3$. 
	Using 
	\begin{align*}
		\Delta_{\xi} K_{\mathrm{odd}}(\xi,\omega)
		&= - \tfrac{1}{2} (1+\tfrac{i}{2}(\omega\cdot\xi)- \tfrac{(\omega \cdot \xi)^2}{2} ) K_{\mathrm{odd}}(\xi,\omega), \\
		(\xi \cdot \nabla_{\xi}) K_{\mathrm{odd}}(\xi,\omega)
		&= \left(\tfrac{i}{2}(\omega\cdot\xi)-\tfrac{(\omega \cdot \xi)^2}{2} \right)K_{\mathrm{odd}}(\xi,\omega),
	\end{align*}
	we obtain
	\begin{equation*}
		\Delta K_{\mathrm{odd}}
		+ \tfrac{1}{2} \xi \cdot \nabla K_{\mathrm{odd}} + \tfrac{1}{2} K_{\mathrm{odd}}
		= 0
	\end{equation*}
	and therefore
	\begin{equation*}
		\Delta U_{\mathrm{odd}}
		+ \tfrac{1}{2} \xi \cdot \nabla U_{\mathrm{odd}} + \tfrac{1}{2} U_{\mathrm{odd}}
		= \langle a, \Delta K_{\mathrm{odd}}
		+ \tfrac{1}{2} \xi \cdot \nabla K_{\mathrm{odd}} + \tfrac{1}{2} K_{\mathrm{odd}} \rangle = 0.
	\end{equation*}  

    Now suppose that $\omega \cdot a(\omega)=0$ (this product makes sense for any distribution $a$ since $\omega$ is smooth). Then:
    $$\textrm{div } U_{even}(\xi) = \partial_j (U_{\mathrm{even}}(\xi))_j = \frac{\sqrt{\pi}}{2}\int_{\mathbb{S}^2} e^{-(\omega\cdot\xi)^2/4}\left(\frac{-(\omega\cdot\xi) \omega_j}{2}\right)a_j(\omega)\mathrm{d}\omega=0, $$
    since $\omega_j a_j(\omega)=0$ in the distributional sense. The proof that $U_{\mathrm{odd}}$ is divergence free follows in the same way.
	
	Finally, the smoothness of $U$ follows from the fact that the Ornstein-Uhlenbeck operator is hypo-elliptic. Hence, \eqref{eq:OU} holds point-wise.
    \end{proof}

    Note that in fact {\textit{any}} distribution satisfying the hypotheses of the above lemma gives rise to a solution of Equation \eqref{OU}, but its convergence to the data may be less well-understood. Finally, we prove the following.

\begin{proposition}
	Let $u_0(x) = |x|^{-1} \cdot v(\tfrac{x}{|x|})$ be a divergence-free, $(-1)$-homogeneous function. Then $U \in C^\infty(\mathbb{R}^3,\mathbb{R}^3)$ constructed via Lemma \ref{lemma:v to a} and Lemma \ref{lemma:a to U}
	solves
	\begin{equation*}
		\Delta U + \frac{1}{2} \xi \cdot \nabla U + \frac{1}{2} U = 0 ,
	\end{equation*}
	and $\textrm{div } U = 0$. 
	Furthermore, its blow-down limit is the data $u_0$, i.e. 
	\begin{equation*}
		\lim_{R \to \infty} U_R(\xi) = u_0(\xi)= |\xi|^{-1}v(\tfrac{\xi}{|\xi|})
	\end{equation*}
	for $\xi \in \mathbb{R}^3$ in the pointwise sense. In particular, $U$ is uniquely determined by its data $v$ (i.e. we have uniqueness of forward self-similar solutions within the class of homothetic solutions).
\end{proposition}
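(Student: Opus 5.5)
The first two claims, that $U$ is smooth, solves the Ornstein--Uhlenbeck equation \eqref{OU}, and is divergence free, should follow directly from Lemma~\ref{lemma:a to U}. I will have to check its two hypotheses for $a=Tv$. For the reality condition $a(-\omega)=\overline{a(\omega)}$, I use $Y_{kl}(-\omega)=(-1)^kY_{kl}(\omega)$ and $c_{k,-l}=(-1)^l\overline{c_{kl}}$: the multiplier of $T$ is real on even $k$ and purely imaginary on odd $k$, which gives exactly this symmetry. For the divergence-free hypothesis, I first have to deduce $\omega\cdot a(\omega)=0$ from $\textrm{div}\, u_0=0$. I will not try to show $\omega\cdot v=0$ directly, since it is false in general. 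Instead I will verify $\textrm{div}\, U=0$ a posteriori: $\textrm{div}\, U$ is a scalar solution of the Ornstein--Uhlenbeck equation with an extra $+\tfrac12$, and its blow-down limit is $\textrm{div}\, u_0=0$. So the uniqueness statement proved below, applied to scalars, will force $\textrm{div}\, U=0$.

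The heart of the proof is the blow-down limit. Set $U_R(\xi)=RU(R\xi)$. For the even part, I compute
\[
R\,\tfrac{\sqrt\pi}{2}\int_{\mathbb{S}^2}e^{-R^2(\omega\cdot\xi)^2/4}\,a(\omega)\,d\omega .
\]
As $R\to\infty$, the Gaussian concentrates on the great circle $\{\omega\cdot\xi=0\}$ with width $\sim 1/(R|\xi|)$. Its total mass is $\sqrt\pi\cdot 2/(R|\xi|)$, so $U_{R,\mathrm{even}}(\xi)\to \frac{\pi}{|\xi|}\int_{\xi^\perp\cap\mathbb{S}^2}a$, which is essentially the Funk (Minkowski) transform of $a$. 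For the odd part, $D(z)\sim 1/(2z)$ at infinity, so $R\,D(R\,\omega\cdot\xi/2)\to \mathrm{P.V.}\,1/(\omega\cdot\xi)$ in the distributional sense. This gives $U_{R,\mathrm{odd}}(\xi)\to \frac{i}{|\xi|}\,\mathrm{P.V.}\!\int a(\omega)/(\omega\cdot\hat\xi)\,d\omega$, a Hilbert-type spherical transform. Both transforms commute with rotations, so by Funk--Hecke they act on each $Y_{kl}$ by a scalar $\lambda_k$. The Funk transform has $\lambda_k=2\pi P_k(0)=2\pi(-1)^{k/2}\frac{(k-1)!!}{k!!}$ for even $k$, and the principal-value transform has $\lambda_k=2\pi\,\mathrm{P.V.}\!\int_{-1}^1 P_k(t)/t\,dt$ for odd $k$. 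The multipliers in Lemma~\ref{lemma:v to a} are precisely the reciprocals of these (possibly up to constants I will recheck), so the limit equals $|\xi|^{-1}v(\hat\xi)$.

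To make this rigorous, I will test against the expansion $a=\sum(Tc)_{kl}Y_{kl}$. I will apply Funk--Hecke to the kernels for each fixed $R$, which gives radial profiles $g_k(R|\xi|)$ times $Y_{kl}(\hat\xi)$ with $g_k(s)\to\lambda_k/s$. The main obstacle is justifying the exchange of limit and sum, because $a$ is only in $H^{-2}$ and the coefficients of $Tv$ grow like $k^{1/2}$. I plan to bound $|g_k(s)|$ uniformly in $s\ge s_0$ by $C k^{-1/2}$ times the limiting multiplier behaviour, using stationary phase for the Legendre integrals. Then I can apply dominated convergence in $\ell^2$ for fixed $\xi\neq0$, possibly assuming additional regularity of $v$ for pointwise convergence, or else interpreting the limit in $L^2(\mathbb{S}^2)$ and upgrading.

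Finally, uniqueness. Let $U$ and $\tilde U$ be homothetic solutions with the same data, and set $W=U-\tilde U$. Then $W$ solves \eqref{OU} with zero blow-down limit. Undoing the self-similar variables, $w(x,t)=t^{-1/2}W(x/\sqrt t)$ solves the heat equation with $w(\cdot,t)\to0$ as $t\to0$ in the distributional sense. Uniqueness for the heat equation in the tempered class then gives $w\equiv0$. The required growth bound on $W$ follows from the explicit representation, since the kernels are bounded by polynomials in $\xi$.
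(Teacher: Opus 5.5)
Your plan is sound, and its core is the paper's. Both arguments rest on the distributional limits $R\,e^{-R^2(\omega\cdot\xi)^2/4}\to 2\sqrt{\pi}\,|\xi|^{-1}\delta_0(\omega\cdot\hat\xi)$ and $R\,D(R\,\omega\cdot\xi/2)\to|\xi|^{-1}\,\mathrm{P.V.}\,(\omega\cdot\hat\xi)^{-1}$; the paper obtains the latter from $|D(y)-\tfrac{1}{2y}|\le|y|^{-3}$ for $|y|\ge 1$.

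The routes diverge after that. The paper pairs $a$ directly with the limiting kernels and concludes with ``combining both parts''. It never checks that $T$ inverts the resulting Funk and principal-value transforms. Also, for $a\in H^{-2}$, pairing with $\delta_0(\omega\cdot\hat\xi)$ (not a smooth test function) is only formal.

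Your Funk--Hecke diagonalization supplies both missing pieces. It computes the limiting multipliers explicitly, and it reduces the exchange of limit and sum to a bound $|T_k\,\sigma g_k(\sigma)|\le C$, uniform in $k$ and in $\sigma=R|\xi|\ge s_0$. That bound is true. For the Gaussian kernel it follows at once from Bernstein's inequality $|P_k(\cos\theta)|\le(2/(\pi k\sin\theta))^{1/2}$, with no stationary phase needed; the Dawson kernel needs a little more care near $t=0$.

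The price is that dominated convergence in $\ell^2$ gives convergence in $L^2(\mathbb{S}^2)$ on each sphere, and with the same bound in $\mathcal{D}'(\mathbb{R}^3)$, rather than pointwise. That is the meaningful statement for $v\in L^2$ anyway. The paper's kernel-level argument is shorter and keeps the great-circle geometry in view, which it uses later for the support-to-decay analysis; yours is what makes it a proof.

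You also add two things the paper omits. First, you check the hypotheses of Lemma~\ref{lemma:a to U}. Your parity argument for $a(-\omega)=\overline{a(\omega)}$ is correct, and the a posteriori route to $\textrm{div}\,U=0$ works. Alternatively, $\omega\cdot a=0$ follows because $|k|^{-2}a(k/|k|)$ is, up to constants, $\widehat{u_0}(k)$. Second, you give an actual argument for uniqueness, which the paper only asserts.

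Two cautions. First, when you recheck constants they will not match, and this is a defect of the printed $T$ that the paper's proof hides. By your own formulas the even limit is $\pi|\xi|^{-1}$ times the Funk transform. Since $T_k=1/(2\pi P_k(0))$ on even modes, $T_k\cdot\pi\cdot 2\pi P_k(0)=\pi$. On odd modes, $\mathrm{P.V.}\int_{-1}^1P_k(t)\,t^{-1}dt=2(-1)^{(k-1)/2}(k-1)!!/k!!$ gives a product of $2$. Concretely:
\begin{itemize}
\item $v\equiv e_1$ gives $a=\tfrac{1}{2\pi}e_1$ and $U(\xi)\sim\pi|\xi|^{-1}e_1$;
\item $v=\omega_3e_1$ gives $U(se_3)\sim 2s^{-1}e_1$.
\end{itemize}
So with $T$ as printed the blow-down limit is $|\xi|^{-1}(\pi v_{\mathrm{even}}+2v_{\mathrm{odd}})$. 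The prefactors $\tfrac{1}{2\pi}$ and $\tfrac{1}{2\pi i}$ in $T$ should be $\tfrac{1}{2\pi^2}$ and $\tfrac{1}{4\pi i}$; with that correction your argument proves the statement.

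Second, in the uniqueness step the growth bound for an arbitrary homothetic $\tilde U$ cannot come from ``the explicit representation'', since that representation is what uniqueness is meant to establish. Impose temperedness as a standing assumption. Even then, a polynomial bound on $W$ gives no bound on $t^{-1/2}W(x/\sqrt t)$ uniform as $t\to0$, so standard heat-equation uniqueness does not apply directly. Argue on the Fourier side instead. A tempered solution of \eqref{eq:OU} has $\hat W=|k|^{-2}e^{-|k|^2}A(k/|k|)$, because no term supported at $k=0$ solves the transformed equation. Hence $\widehat{W_R}\to|k|^{-2}A$ in $\mathcal{S}'$, and a vanishing blow-down limit in $\mathcal{D}'$ forces $A=0$.
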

\begin{proof}
	From Lemma \ref{lemma:a to U} we know that $U \in C^\infty(\mathbb{R}^3,\mathbb{R}^3)$ and that $U$ solves the desired equations. It remains to check the blow-down limit. 
	
	We consider
	\begin{equation*}
		U_{\mathrm{even},R}(\xi)
		\coloneq R 
		U_{\mathrm{even}}(R \xi)
		= \frac{\sqrt{\pi}}{2} \int_{\mathbb{S}^2} R e^{-\frac{R^2 ( \omega \cdot \xi)^2}{4}} a_{\mathrm{even}}(\omega) \, \mathrm{d} \omega . 
	\end{equation*}
	We have by dominated convergence theorem
	\begin{equation*}
		\int_{\mathbb{R}} R e^{-\frac{R^2 s^2}{4}} \phi(s) \, \mathrm{d} s
		= 2 \int_{\mathbb{R}} e^{-r^2} \phi\left( \tfrac{2r}{R} \right) \, \mathrm{d} r
		\to 2 \phi(0) \int_{\mathbb{R}} e^{-r^2}\, \mathrm{d} r
		= 2 \sqrt{\pi} \phi(0)
	\end{equation*}
	for every test function $\phi \in C^\infty_c(\mathbb{R})$,
	which implies for $s = (\omega \cdot \xi)$ and $\xi= |\xi| \omega'$:
	\begin{equation*}
		R e^{-\frac{R^2 ( \omega \cdot \xi)^2}{4}}
		\to |\xi|^{-1}2 \sqrt{\pi} \delta_0(\omega\cdot\omega')
		\quad \text{ as } R \to + \infty,
	\end{equation*}
	in the sense of distributions.
	We conclude  
	\begin{equation*}
		U_{\mathrm{even},R}(\xi)=\frac{\sqrt{\pi}}{2} \int_{\mathbb{S}^2} R e^{-\frac{R^2 ( \omega \cdot \xi)^2}{4}} a_{\mathrm{even}}(\omega) \, \mathrm{d} \omega \to 
		|\xi|^{-1}\pi \int_{\mathbb{S}^2} \delta_0(\omega \cdot \omega') a_{\mathrm{even}}(\omega) \, \mathrm{d} \omega
	\end{equation*}
    as $R\to \infty$.
	For the odd part, we consider
	\begin{equation*}
		U_{\mathrm{odd},R}(\xi)
		\coloneq i R \int_{\mathbb{S}^2} 
		D(\tfrac{R \omega \cdot \xi}{2}) a_{\mathrm{odd}}(\omega) \, \mathrm{d} \omega .
	\end{equation*}   
	A simple property of the Dawson function is that:
    $$\left|D(y)-\frac{1}{2y}\right| \leq \frac{1}{|y|^3}, \quad \forall |y|\geq 1.$$
    From this we have:
	\begin{equation}
    \label{DAWSONAUX}
		\left|RD\left(\frac{Rs}{2}\right)-\frac{1}{s}\right| \leq R^{-2}\cdot \frac{8}{|s|^3}, \quad \forall s\in \mathbb{R}, \textrm{ s.t. } |Rs|\geq 2
	\end{equation}
Now let $\phi \in C^\infty_c(\mathbb{R})$ be an arbitrary test function and let $M>2$ be an arbitrary real number. We have:
$$\int_{\mathbb{R}} RD(Rs/2)\phi(s) ds = \int_{|s|\geq MR^{-1}}RD(Rs/2)\phi(s) ds + \int_{|s|\leq MR^{-1}}RD(Rs/2)\phi(s) ds . $$
For the second term on the right we see
$$\int_{|s|\leq MR^{-1}}RD(Rs/2)\phi(s) ds = 2 \int_{|y|\leq M/2} D(y)\phi\left(\tfrac{2y}{R}\right)dy\to 0,$$
as $R\to \infty$ by the dominated convergence theorem and since the Dawson function is odd. For the other term, we use that $M>2$ and the inequality from Equation \eqref{DAWSONAUX} to get:
$$\int_{|s|\geq MR^{-1}}RD(Rs/2)\phi(s) ds = \int_{|s|\geq MR^{-1}}\frac{1}{s}\phi(s) ds + \mathcal{O}(R^{-2})\int_{|s|\geq MR^{-1}}\frac{1}{|s|^3}\phi(s) ds.$$
Here the implicit constant in $\mathcal{O}(R^{-2})$ is of course independent of $M$. One more change of variables gets us:
$$\int_{|s|\geq MR^{-1}}RD(Rs/2)\phi(s) ds = \int_{|s|\geq MR^{-1}}\frac{1}{s}\phi(s) ds + \mathcal{O}(1)\int_{|y|\geq M}\frac{1}{|y|^3}\phi(y/R) dy.$$
Once again the implicit constant in $\mathcal{O}(1)$ is independent of both $R$ and $M$. Now, taking the limit as $R\to \infty$ we get:
$$\lim_{R\to \infty} \int_{|s|\geq MR^{-1}}RD(Rs/2)\phi(s) ds = \textrm{P.V.}\left(\frac{1}{s}\right) + \mathcal{O}(1)\int_{|y|\geq M} \frac{1}{|y|^3}\phi(0)dy.$$
Taken together with the local integral, we conclude that:
$$\lim_{R\to \infty}\int_{\mathbb{R}} RD(Rs/2)\phi(s) ds = \textrm{P.V.}\left(\frac{1}{s}\right) + \mathcal{O}(1)\int_{|y|\geq M} \frac{1}{|y|^3}\phi(0)dy.$$
Now, letting $M\to \infty$, we conclude:
$$\lim_{R\to \infty}\int_{\mathbb{R}} RD(Rs/2)\phi(s) ds = \textrm{P.V.}\left(\frac{1}{s}\right) .$$
This implies that for $s = (\omega \cdot \xi)$ with $\xi = |\xi|\omega'$ we have
\begin{equation*}
	R D(\tfrac{R \omega \cdot \xi}{2})
	\to |\xi|^{-1}\mathrm{P.V.} \frac{1}{\omega \cdot \omega'}
	\quad \text{ as } R \to + \infty,
\end{equation*}
in the sense of distributions.
We conclude  
\begin{equation*}
	U_{\mathrm{odd},R}(\xi)
	= i \int_{\mathbb{S}^2} R D(\tfrac{R \omega \cdot \xi}{2}) a_{\mathrm{odd}}(\omega) \, \mathrm{d} \omega \to 
	i |\xi|^{-1}\int_{\mathbb{S}^2} \mathrm{P.V.} \, \frac{a_{\mathrm{odd}}(\omega')}{\omega \cdot \omega'} \, \mathrm{d} \omega'
	\quad \text{ as } R \to + \infty .
\end{equation*}
 Combining both parts, we see that 
 \begin{equation*}
 	U_R(\xi) = R U (R \xi) \to u_0(\xi)
 \end{equation*}
 and the claim is proven. 
\end{proof}

\begin{remark}
	Let us point out that the boundary condition in \cite{JS1} and \cite{JS2} given by
	\begin{equation*}
		| U(x) - u_0(x) | = o(|x|^{-1})
		\qquad \text{ as } |x| \to + \infty 
	\end{equation*}
	implies that $u_0$ is the blow-down limit of $U$. 
\end{remark}

\section{Properties of Homothetic Solutions}

\subsection{Regularity of the Data}
In this subsection, we shall prove Theorem \ref{LIOU}. We recall the definition of the Morrey spaces $\mathcal{M}^p_q(\mathbb{R}^3)$ where $1\leq q \leq p$:
$$f\in \mathcal{M}^p_q(\mathbb{R}^3) \iff \forall x\in \mathbb{R}^3, \forall R>0, \quad \int_{B_R(x)} |f(y)|^q dy \leq R^{3-\tfrac{3q}{p}}.$$

The following proposition is key:

\begin{proposition}
\label{NSTOEULER}
    Let $U$ be a homothetic solution of the three-dimensional Navier-Stokes equations and assume that $U\in \mathcal{M}^3_q(\mathbb{R}^3)$ for some $2<q\leq 3$. Then the data $u_0$ corresponding to $U$ (equivalently the distributional blow-down limit of $U$) is a $(-1)$-homogeneous weak stationary solution of the Euler equations that also lies in $\mathcal{M}^3_q(\mathbb{R}^3)$.
\end{proposition}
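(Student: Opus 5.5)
By the lemma characterizing homothetic solutions, $U$ is smooth and satisfies both the steady Euler system $U\cdot\nabla U+\nabla P=0$, $\textrm{div } U=0$, and the Ornstein--Uhlenbeck equation \eqref{OU}. The plan is to pass the Euler system to the limit along the blow-down rescalings $U_R(x):=RU(Rx)$, $P_R(x):=R^2P(Rx)$. The key observation is that the steady Euler equations are invariant under this scaling. Hence each pair $(U_R,P_R)$ is again a weak stationary Euler solution: for every divergence-free test field $\varphi\in C^\infty_c(\mathbb{R}^3)$ we have $\int U_R\otimes U_R:\nabla\varphi\,dx=0$, and $\int U_R\cdot\nabla\psi\,dx=0$ for every scalar test function $\psi$. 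It therefore suffices to show that $U_R\to u_0$ strongly in $L^2_{\mathrm{loc}}$, or at least that $U_R\otimes U_R\to u_0\otimes u_0$ in $\mathcal{D}'$.

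\textbf{Step 1 (uniform bounds and the limit).} The Morrey norm is invariant under this rescaling, since $\int_{B_r(x)}|U_R|^q\,dy=R^{q-3}\int_{B_{Rr}(Rx)}|U|^q\,dy\le r^{3-q}$. So $\{U_R\}$ is bounded in $\mathcal{M}^3_q$, and in particular bounded in $L^q_{\mathrm{loc}}$ with $q>2$. The limiting Proposition already identifies the pointwise (distributional) limit of $U_R$ as $u_0$. By weak lower semicontinuity applied ball by ball, $u_0$ inherits the Morrey bound, so $u_0\in\mathcal{M}^3_q$. It is $(-1)$-homogeneous and divergence free by construction.

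\textbf{Step 2 (strong compactness; the main obstacle).} Weak $L^q_{\mathrm{loc}}$ convergence does not pass to the limit in the quadratic term, so I need strong $L^2_{\mathrm{loc}}$ compactness. Since $q>2$, Vitali's theorem (uniform integrability of $|U_R|^2$ from the $L^q$ bound) reduces this to a.e. convergence of $U_R$, or to a compactness argument. To obtain it, I would use the Ornstein--Uhlenbeck equation. Rescaled, it reads
\begin{equation*}
R^{-2}\Delta U_R+\tfrac12\bigl(U_R+x\cdot\nabla U_R\bigr)=0 .
\end{equation*}
Equivalently, $u(x,t)=t^{-1/2}U(x/\sqrt t)$ solves the heat equation with data $u_0$, and $U_R(x)=u(x,R^{-2})$. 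Thus $U_R=e^{R^{-2}\Delta}u_0$, a heat-semigroup regularization of $u_0$ at times $t=R^{-2}\to0$. This identification holds by the uniqueness of homothetic solutions proved in the preceding Proposition. I expect this to be the hardest point: one must justify that representation for Morrey data, i.e. uniqueness of heat solutions in the Morrey class.

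Granting it, $e^{t\Delta}u_0\to u_0$ in $L^2_{\mathrm{loc}}$ as $t\to0$. To see this, approximate $u_0\in L^q_{\mathrm{loc}}$ by a smooth $g$ on a large ball, and control the tail of $u_0$ through the Morrey bound together with the Gaussian decay of the heat kernel. So $U_R\to u_0$ strongly in $L^2_{\mathrm{loc}}$, and indeed in $L^{p}_{\mathrm{loc}}$ for every $p<q$.

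\textbf{Step 3 (passing to the limit).} With strong $L^2_{\mathrm{loc}}$ convergence, $U_R\otimes U_R\to u_0\otimes u_0$ in $L^1_{\mathrm{loc}}$. So $\int u_0\otimes u_0:\nabla\varphi\,dx=0$ for all divergence-free test fields, which is exactly the weak stationary Euler equation. The pressure is recovered in the standard way as $p=(-\Delta)^{-1}\textrm{div}\,\textrm{div}(u_0\otimes u_0)$, which is $(-2)$-homogeneous and lies in $\mathcal{M}^{3/2}_{q/2}$ by Calderón--Zygmund theory on Morrey spaces with $q/2>1$; alternatively it arises as the limit of $P_R$. This uses $q>2$ a second time.
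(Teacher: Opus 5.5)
Your proof is correct, but it gets the key strong $L^2_{\mathrm{loc}}$ convergence by a different mechanism than the paper.

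The paper takes exactly the shortcut you mention in Step 2 and then set aside. The scale-invariant Morrey bound gives $\sup_R\|U_R\|_{L^q(K)}<\infty$, and de la Vall\'ee Poussin then gives uniform integrability of $|U_R|^2$. The almost-everywhere pointwise blow-down convergence from the preceding Proposition gives convergence in measure, so Vitali yields $U_R\to u_0$ in $L^2(K)$. The Euler equation then passes to the limit as in your Step 3.

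You instead use only the distributional blow-down limit and draw compactness from the heat flow. This goes through $U_R=e^{R^{-2}\Delta}u_0$ and the approximate-identity property for $u_0\in L^q_{\mathrm{loc}}$, with the Morrey bound controlling the Gaussian tails.

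For the identification step, the uniqueness clause of the preceding Proposition is not the right justification. That clause concerns homothetic profiles, and $e^{\Delta}u_0$ is not known a priori to solve Euler. What you need is uniqueness for the linear heat equation. For $s>0$, Tychonoff gives $u(t+s)=e^{t\Delta}u(s)$, since local parabolic estimates make $u$ bounded on $\mathbb{R}^3\times[s,\infty)$. You may then let $s\to 0$. The uniform bound $\sup_{s}\sup_x\int_{B_1(x)}|u(s)|\,dy<\infty$ controls the kernel tails, and the compactly supported part of the kernel converges distributionally. Alternatively, the paper's kernels $e^{-(\omega\cdot\xi)^2/4}$ and $D(\omega\cdot\xi/2)$ are, up to constants, $e^{\Delta}$ applied to $\delta_0(\omega\cdot x)$ and $\mathrm{P.V.}\,(\omega\cdot x)^{-1}$. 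So the representation is built into the construction.

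The paper's route is shorter. However, its a.e. convergence comes from pairing the $H^{-2}$ distribution $a$ against kernels that converge only in $\mathcal{D}'$. Your route needs only the distributional trace plus a standard linear fact. Your Step 1 also supplies the argument for $u_0\in\mathcal{M}^3_q$, which the paper asserts without proof.
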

\begin{proof}
    Let $K$ be an arbitrary compact subset of $\mathbb{R}^3$. By hypothesis, we have that $U\in \mathcal{M}^3_q(\mathbb{R}^3)$ for some $2<q\leq 3$, whence there exists a constant $C>0$ such that
    $$\int_K |U_R(x)|^q dx \leq C,$$
    uniformly in $R>0$. In particular, since $q>2$, we conclude by the theorem of De La Vall\'ee Poussin (see \cite{A}) that $|U_R|^2$ is uniformly integrable on $K$. We have previously shown that $U_R$ converges distributionally and in the (almost everywhere) pointwise sense on $\mathbb{R}^3$ to its blow-down limit $u_0$. This implies that $U_R$ converges in measure to $u_0$ on the compact set $K$. Finally, by the Vitali convergence theorem (see \cite{B}), we conclude that $U_R$ converges strongly in $L^2(K)$ to $u_0$. Now consider an arbitrary divergence free distribution $\phi$ on $\mathbb{R}^3$. We have shown that $U_R$ converges strongly in $L^2$ on the support of $\phi$ to $u_0$, so we get:
    $$\lim_{R\to \infty} \int_{\mathbb{R}^3} U_R\otimes U_R : \nabla \phi dx = \int_{\mathbb{R}^3} u_0 \otimes u_0 :\nabla\phi dx.$$
    We may conclude that $u_0$ is a distributional solution of the Euler equations.
\end{proof}
\begin{remark}
    Note that the choice of Morrey space we make is not quite arbitrary. For instance, Taylor and Kato prove in \cite{Ta} and respectively \cite{K} that the Morrey space $\mathcal{M}^3_q(\mathbb{R}^3)$ admits a small-data global well-posedness result for mild solutions whenever $q\in (1,3]$. Of course, any solution satisfying the boundary conditions of Jia and Šverák trivially belongs in this class as well.
\end{remark}

\begin{remark}
It is not difficult to see that if the blow-down limit $u_0(x)$ above equals $u_0(x) = |x|^{-1}v(x/|x|)\in \mathcal{M}^3_q(\mathbb{R}^3)$, then $v\in L^q(\mathbb{S}^2)$.
\end{remark}

\subsubsection{Three-Dimensions}
In this section we prove Theorem \ref{LIOU}, which is a conditional Liouville theorem for homothetic self-similar solutions. The main idea of the proof is to instead show that the data (a $(-1)$-homogeneous solution to the Euler equations) is trivial by sharpening a similar proof from \cite{Sh}. We split the proof of Theorem \ref{LIOU} into two propositions, each of which handles one itemized hypothesis from the statement of Theorem \ref{LIOU}. We begin with:
\begin{proposition}
    Suppose $U$ is a homothetic forward self-similar solution to the three-dimensional Navier-Stokes equations lying in the Morrey space $\mathcal{M}^3_q(\mathbb{R}^3)$ for some $2<q\leq 3$. Suppose that the forward self-similar solution corresponds to initial data
    $$ u_0 = \frac{\vec{v}+f\vec{n}}{|x|}\in L^q(\mathbb{S}^2),$$
    where $\vec{n}$ is the unit normal to the sphere and $\vec{v}$ is a tangent vector field to the sphere. Suppose $\vec{v}\in W^{1,4/3}(\mathbb{S}^2)$ and $f\in L^4(\mathbb{S}^2)$.
    Then $f=0$ and the head pressure $|\vec{v}|^2+2p=0$ as well. If, in addition, the set $\{\omega\in \mathbb{S}^2 : \vec{v}(\omega)=0\}$ has measure zero or is a closed set with $C^2$ boundary, then $\vec{v}=u_0=U=0$.
\end{proposition}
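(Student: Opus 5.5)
By Proposition \ref{NSTOEULER}, the blow-down limit $u_0$ is a $(-1)$-homogeneous weak stationary solution of the Euler equations lying in $\mathcal{M}^3_q$. So the whole statement reduces to a Liouville theorem for $(-1)$-homogeneous steady Euler flows in the stated regularity class, in the spirit of Shvydkoy \cite{Sh}. The Navier--Stokes part is then recovered at the end: a homothetic $U$ is uniquely determined by its data through Lemmas \ref{lemma:v to a} and \ref{lemma:a to U}, so $u_0=0$ forces $U=0$.

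First I would write the Euler equations for $u_0=(\vec v+f\vec n)/|x|$ with pressure $p/|x|^2$ (the pressure is $(-2)$-homogeneous, and is recovered as a Riesz-transform quadratic in $u_0$, hence $p\in L^2(\mathbb{S}^2)$ since $u_0\in L^4$). Separating radial and tangential components on $\mathbb{S}^2$ gives the reduced system
\begin{gather*}
\operatorname{div}_{\mathbb{S}^2}\vec v+f=0,\\
\vec v\cdot\nabla_{\mathbb{S}^2}f-f^2-|\vec v|^2-2p=0,\\
\vec v\cdot\nabla_{\mathbb{S}^2}\vec v-f\vec v+\nabla_{\mathbb{S}^2}p=0 .
\end{gather*}
The hypotheses $\vec v\in W^{1,4/3}$ and $f\in L^4$ are exactly what make every product here integrable: $\vec v\in L^4$ by Sobolev embedding on $\mathbb{S}^2$, and $\nabla\vec v\in L^{4/3}$ pairs with $L^4$. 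So these identities hold in $L^1$, and I can test them against $1$, $f$, and the head pressure $H=|\vec v|^2+2p$, justifying each integration by parts by density.

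The core computation follows Shvydkoy. Integrating the radial equation gives $\int(\vec v\cdot\nabla f - f^2-H)=0$. Integrating by parts and using $\operatorname{div}\vec v=-f$ turns $\int\vec v\cdot\nabla f$ into $\int f^2$, so $\int H=0$. Dotting the tangential equation with $\vec v$ yields a transport identity of the form $\vec v\cdot\nabla H = 2fH$ plus a remainder that I would track carefully. Testing this against suitable powers or sign functions of $H$ and integrating by parts should give $\int f H\cdot(\text{const})=0$ and eventually $\int f^2 (\cdot)=0$. From this I aim to conclude $f=0$ and then $H=0$. This step, in which the weak regularity must still suffice to renormalize the transport equation for $H$ (a DiPerna--Lions type argument with $\vec v\in W^{1,4/3}$ and $H\in L^2$), is where I expect the main obstacle, since the exponents are borderline. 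I would first try commutator estimates with mollification on the sphere.

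Once $f=0$ and $H=0$, the field $\vec v$ is divergence-free on $\mathbb{S}^2$ and satisfies $\vec v\cdot\nabla\vec v=\nabla(|\vec v|^2/2)$, i.e.\ the vorticity-type quantity $\operatorname{curl}_{\mathbb{S}^2}\vec v$ annihilates $\vec v^\perp$. Writing $\vec v=\nabla^\perp\psi$, this means $\nabla\psi$ is parallel to $\nabla\Delta\psi$ wherever $\vec v\neq0$, while $|\nabla\psi|^2=-2p$ is tied to the pressure equation. To finish, the extra hypothesis on the zero set is what allows the next step: either the zero set is null, or it has a $C^2$ boundary so that I can integrate by parts on its complement. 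Then a Pohozaev/Gauss--Bonnet type identity on $\mathbb{S}^2$ (using curvature $1$) gives $\int|\vec v|^2=0$, hence $\vec v=0$. This yields $u_0=0$ and, by uniqueness of homothetic solutions with given data, $U=0$.
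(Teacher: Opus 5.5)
Your reduction to a Liouville problem for $(-1)$-homogeneous steady Euler flows via Proposition~\ref{NSTOEULER}, and your final step from $u_0=0$ to $U=0$, match the paper. The gap is in the core step, and it has two parts.

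First, your tangential equation is wrong. For $u_0=(\vec v+f\vec n)/|x|$, the $f\vec v$ contribution from $u_r\partial_r u_\tau$ cancels the one from the curvature term $u_r u_\tau/r$. The tangential equation is therefore just $\vec v\cdot\nabla\vec v+\nabla p=0$, with no $-f\vec v$ term. Dotting with $\vec v$ gives the exact Bernoulli law $\vec v\cdot\nabla H=0$, not ``$2fH$ plus a remainder''.

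Second, even with the correct law, your sketch does not close. Testing $\vec v\cdot\nabla H=0$ against functions $\Phi(H)$, and using $\operatorname{div}\vec v=-f$, only gives $\int f\,\Phi(H)=0$. Together with $\int H=0$, this determines neither $f$ nor $H$. A DiPerna--Lions renormalization is also unavailable at these exponents: $H$ is only in $L^2$, while pairing with $\vec v\in W^{1,4/3}$ would need $L^4$.

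The missing idea is to couple the two transport equations by testing the $f$-equation against $H$, i.e.\ to integrate
\[
\operatorname{div}(fH\vec v)=(\operatorname{div}\vec v)fH+H\,(\vec v\cdot\nabla f)+f\,(\vec v\cdot\nabla H)=-f^2H+H(H+f^2)+0=H^2 .
\]
This gives $\int_{\mathbb{S}^2}H^2=0$, so $H$ is eliminated first, not $f$. Only then does $\vec v\cdot\nabla f=f^2$ give $\operatorname{div}(f^3\vec v)=-f^4+3f^4=2f^4$, hence $f=0$. The hypotheses $\vec v,f\in L^4$ and $H\in L^2$ are exactly what put each term of these two identities in $L^1$. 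The paper uses only these two specific identities and never renormalizes the transport equation for $H$.

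For the final step, note that $(\operatorname{curl}\vec v)\,\vec v^\perp=0$ says directly that $\operatorname{curl}\vec v=0$ wherever $\vec v\neq0$, which is stronger than $\nabla\psi\parallel\nabla\Delta\psi$. Under the zero-set hypothesis, $\vec v$ is then curl- and divergence-free, i.e.\ a harmonic field on $\mathbb{S}^2$, hence zero. No Pohozaev identity is needed.
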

\begin{proof}
We know by Proposition \ref{NSTOEULER} that $u_0$ is a stationary solution of the three-dimensional Euler equations. This reduces to a system of differential equations on the sphere:
\begin{equation}\label{EULERS2}\begin{cases}
(\nabla^\perp \cdot v)v^\perp + \nabla\left( \frac{1}{2}|v|^2 + p\right) =0\\
v\cdot\nabla f = |v|^2 + f^2 + 2p\\
f + \nabla\cdot v=0.
\end{cases}\end{equation}
Here we have abused notation by writing $v$ instead of $\vec{v}$.
We remark that the first equation in the system above can be written as
$$v\cdot\nabla v +\nabla p =0,$$
therefore we can always recover the pressure from 
$$-\Delta p = \textrm{div}^2(v\otimes v) + |v|^2+2p.$$
Since $v\in W^{1,4/3}\subset L^4$, it follows from standard elliptic regularity that $p\in L^2$. Let 
$h:= 2p + |v|^2 \in L^2$. Then we have that $h,f$ satisfy the following transport equations:
$$v\cdot\nabla h=0$$
and
$$v\cdot \nabla f = h+f^2.$$
Since $v\in L^4, f\in L^4$, and $h \in L^2$, we have that $vfh \in L^1$, and in particular, $vfh$ is a distribution  (i.e. we can take its divergence).
Since $v\in W^{1,4/3}$, we can say that the following integrals are justified
$$0 = \int_{\mathbb{S}^2} \textrm{div }( vfh) = \int_{\mathbb{S}^2} (\nabla\cdot v)fh + \int_{\mathbb{S}^2} h(v \cdot\nabla f) + \int_{\mathbb{S}^2} f(v\cdot\nabla h) =\int_{\mathbb{S}^2} h^2.$$
From here we conclude that $h\equiv 0$, which, along with $v\in W^{1,4/3}$, implies that the distribution $(\nabla^\perp\cdot v)v^\perp=0$.   Now since $v\in L^4$ and $f^3\in L^{4/3}$, we get $vf^3 \in L^1$ and the following integrals are justified:
$$0 = \int_{\mathbb{S}^2} \textrm{div}(vf^3) = 2\int_{\mathbb{S}^2} f^4.  $$
We conclude that $f\equiv 0$, which implies that $\nabla \cdot v=0$. This concludes the proof that $u_0$ must be tangential to the sphere, divergence free, and with zero head pressure whenever $u_0$ lies in this regularity class. If in addition, $v$ has a Lebesgue null zero set, then we have to conclude $\nabla^\perp \cdot v=0$ a.e., so $v$ is harmonic and therefore zero on $\mathbb{S}^2$. Otherwise, if the boundary of the (closed) null set of $v$ is $C^2$, then whenever $v$ is not zero, it is harmonic (on a $C^2$ open subset of $\mathbb{S}^2$). Therefore, $v=0$ necessarily on that set as well.
\end{proof}
\begin{remark}
    In the proof above, there remains a pathological case: that the data $u_0 = \vec{v}/|x|$ is a distributionally divergence-free vector field that has $\vec{v}=0$ on a rough non-null set $Z$ and that $\textrm{curl }_{\mathbb{S}^2} \vec{v}=0$ in the distributional sense on the complement of $Z$ in $\mathbb{S}^2$. This essentially means that $v= \nabla^\perp \psi$ for some $\psi \in C^{1,\alpha}$, with $\psi$ smooth and harmonic on the complement of a rough set $Z$ and $\nabla \psi=0$ on $Z$. However, this boundary value problem is pathological and not well-understood.
\end{remark}

\begin{remark}
Instead of assumptions on the null set of $\vec{v}$, we may assume continuity of $\vec{v}$ to conclude the conditional Liouville theorem.
For instance, if $v\in W^{1,q}$ with $q>2$, then $v$ is continuous. Then we get
$(\nabla^\perp \cdot v)|v^\perp|^2=0$ a.e. on $\mathbb{S}^2$. Since $v$ is continuous, we conclude that the $L^q$ function $\nabla^\perp \cdot v=0$ a.e. on the open set $\{x\in \mathbb{S}^2 : |v(x)|\neq 0\}$. In addition, $\nabla^\perp \cdot v=0$ trivially on the interior of the zero set $\{x\in \mathbb{S}^2 : |v(x)|= 0\}$. We conclude that $\nabla^\perp\cdot v=0$ a.e. on $\mathbb{S}^2$. Combined with the knowledge that $\nabla\cdot v=0$ on $\mathbb{S}^2$, we know that $\Delta v=0$ a.e. on $\mathbb{S}^2$. This implies that $v=0$.
\end{remark}
We now prove the other part of Theorem \ref{LIOU}:
\begin{proposition}
   Suppose $U$ is a homothetic forward self-similar solution to the three-dimensional Navier-Stokes equations lying in the Morrey space $\mathcal{M}^3_q(\mathbb{R}^3)$ for some $2<q\leq 3$. Suppose that the forward self-similar solution corresponds to initial data
    $$ u_0 = \frac{\vec{v}+f\vec{n}}{|x|}\in L^q(\mathbb{S}^2),$$
    where $\vec{n}$ is the unit normal to the sphere and $\vec{v}$ is a tangent vector field to the sphere. Suppose
     $\vec{v},f \in C^\alpha(\mathbb{S}^2)$, for some $\alpha>1/2$.
    Then $f=\vec{v}=u_0=U=0$.
\end{proposition}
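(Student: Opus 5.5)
The plan is to follow the same route as for the $W^{1,4/3}$ case, replacing the Sobolev justification of the integrations by parts with a commutator (Onsager-type) argument at Hölder regularity $\alpha>1/2$. A Radó-type removability result then replaces the assumption on the zero set of $\vec v$.

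\emph{Step 1: the Euler system on the sphere.} By Proposition~\ref{NSTOEULER}, $u_0$ is a $(-1)$-homogeneous weak stationary solution of the Euler equations. So $(v,f,p)$ (writing $v=\vec v$) solves the system \eqref{EULERS2} on $\mathbb{S}^2$ in the distributional sense. The pressure is recovered from
$$-\Delta p=\mathrm{div}^2(v\otimes v)+|v|^2+2p.$$
Since $v\in C^\alpha$, Schauder theory gives $p\in C^\alpha$ (up to the usual handling of the zeroth-order term). Set $h:=2p+|v|^2\in C^\alpha$. As before, $h$ and $f$ satisfy, distributionally,
$$v\cdot\nabla h=0,\qquad v\cdot\nabla f=h+f^2,\qquad f+\nabla\cdot v=0.$$
Since $\nabla\cdot v=-f$, the products $v\cdot\nabla g=\mathrm{div}(vg)+fg$ are well defined for $g\in C^\alpha$.

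\emph{Step 2: the identities $\int h^2=0$ and $\int f^4=0$.} Let $g_\varepsilon$ denote a mollification on the sphere (e.g.\ the heat semigroup at time $\varepsilon^2$). I test the transport equation for $h$ against $(fh)_\varepsilon$-type quantities, exactly as in the formal computation $0=\int\mathrm{div}(vfh)=\int h^2$. Concretely, I write
$$\int (v\cdot\nabla f)_\varepsilon\, h_\varepsilon+\int (v\cdot\nabla h)_\varepsilon\, f_\varepsilon+\int (\nabla\cdot v)\, f_\varepsilon h_\varepsilon=0+R_\varepsilon,$$
where the remainder consists of commutators $\int\big[(v\cdot\nabla g)_\varepsilon-v\cdot\nabla g_\varepsilon\big]\,k_\varepsilon$. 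The standard Constantin--E--Titi estimate gives
$$\|(v\cdot\nabla g)_\varepsilon-v\cdot\nabla g_\varepsilon\|_\infty\lesssim \varepsilon^{2\alpha-1}\,[v]_\alpha[g]_\alpha .$$
This is where $\alpha>1/2$ enters. There is only one smooth factor $k_\varepsilon$, bounded in $L^\infty$, and no derivative can be placed on it because $f$ and $h$ are merely $C^\alpha$. So we need $2\alpha-1>0$, not the Onsager $3\alpha-1>0$. Letting $\varepsilon\to0$ yields $\int_{\mathbb{S}^2}h^2=0$, hence $h\equiv0$. The same argument with $f^3$ in place of $fh$ gives $2\int f^4=0$, hence $f\equiv0$ and $\nabla\cdot v=0$.

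\emph{Step 3: vanishing of $v$.} With $h=0$ and $f=0$, the first equation in \eqref{EULERS2} becomes $(\nabla^\perp\!\cdot v)\,v^\perp=0$ as distributions. Since $v$ is divergence free, write $v=\nabla^\perp\psi$ with $\psi\in C^{1,\alpha}(\mathbb{S}^2)$. On the open set $\{v\neq0\}$, the continuity of $v$ lets us divide locally by $v^\perp$, which gives $\Delta\psi=0$ there. Thus $\psi$ is a $C^1$ function that is harmonic off its critical set $\{\nabla\psi=0\}$.

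I expect this to be the main obstacle, because the zero set of $v$ may be rough, which is exactly the pathology in the preceding remark. I would invoke the Radó--Král removability theorem: a $C^1$ function harmonic on the complement of its critical set is harmonic everywhere. Alternatively, one can argue directly: test $\Delta\psi$ against $\phi\,\chi_\delta(|\nabla\psi|)$ with a cutoff $\chi_\delta$ and use that $\nabla\psi=0$ on the critical set. Either way $\psi$ is harmonic on $\mathbb{S}^2$, hence constant, so $v=0$ and $u_0=0$. Finally, the Proposition on uniqueness of homothetic solutions (via Lemmas~\ref{lemma:v to a} and \ref{lemma:a to U}) determines $U$ uniquely from its data, so $U=0$.
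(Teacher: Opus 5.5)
Your proof is correct. Steps 1--2 coincide with the paper's argument: the same $\varepsilon^{2\alpha-1}$ commutator bound justifies $0=\int_{\mathbb{S}^2}\mathrm{div}(vfh)=\int_{\mathbb{S}^2}h^2$ and $0=\int_{\mathbb{S}^2}\mathrm{div}(vf^3)=2\int_{\mathbb{S}^2}f^4$. The paper also mollifies $v$, which is immaterial since $(v-v_\varepsilon)\cdot\nabla f_\varepsilon=O(\varepsilon^{2\alpha-1})$. Your final conclusion $U=0$ rests on the same uniqueness statement for homothetic solutions that the paper uses.

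The difference is Step 3, and there the paper's route is more elementary than yours. It notes that, because $v$ is continuous, $W=\{v\neq0\}$ is open, $v$ is a smooth harmonic field on $W$, and $v=0$ on $\partial W$, and it concludes $v\equiv0$. The implicit justification is a maximum principle: $|v|^2$ is subharmonic on $W$ by the Bochner formula, since $\mathrm{Ric}=g>0$ on $\mathbb{S}^2$, and if $W=\mathbb{S}^2$ there are no nonzero harmonic fields. So the roughness of the zero set, which you flag as the main obstacle, is harmless for continuous $v$. You never need to extend harmonicity across $\{v=0\}$; you only need to show that $\{v\neq0\}$ is empty.

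Your removability route is also valid. In two dimensions the cleanest justification of the critical-set version you quote is the classical Radó theorem applied to $\partial_z\psi$ in isothermal coordinates: this is a continuous function that is holomorphic off its zero set.

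Your alternative cutoff argument, however, does not close as sketched. After discarding the term supported in $\{|\nabla\psi|\geq\delta\}$, the only nontrivial contribution is $\int\phi\,\chi_\delta'(|\nabla\psi|)\,\nabla\psi\cdot\nabla|\nabla\psi|$. By the coarea formula and the divergence theorem on the superlevel sets $\{|\nabla\psi|>t\}$, this equals an average over $t\in(\delta,2\delta)$ of $-\int_{\{|\nabla\psi|>t\}}\nabla\phi\cdot\nabla\psi$, which tends to $\langle\Delta\psi,\phi\rangle$ itself. The computation is therefore circular unless you can independently show $\int_{\{\delta<|\nabla\psi|<2\delta\}}|\nabla|\nabla\psi||\to0$.
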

\begin{proof}
Note that since $\alpha>1/2$, the product $v\cdot \nabla q$ makes sense as a distribution in $B^{\alpha-1}_{\infty,\infty}$ for any $q\in C^\alpha(\mathbb{S}^2)$. We let  $|v|^2+f^2+2p := H$.

For any vector field or function $q$ on the sphere, we consider its mollification $q_\epsilon$. Following the idea from \cite{CET}, we have the following commutator estimate for any $q\in C^\alpha$. 
$$\|(v_\epsilon \cdot\nabla q_\epsilon) - (v\cdot\nabla q)_\epsilon\|_{L^\infty} \leq \epsilon^{2\alpha-1}\|q\|_{C^\alpha}\|v\|_{C^\alpha}.$$
Therefore, for every $\alpha>1/2$, we have:
$$ v_\epsilon\cdot\nabla f_\epsilon = H_\epsilon  +R_{1}(\epsilon),$$
and
$$ v_\epsilon\cdot\nabla h_\epsilon = R_{2}(\epsilon),$$
where $R_i(\epsilon)\to0$ as $\epsilon\to0$. Performing an integration by parts as in the proof of the nonexistence of $(-1)$-homogeneous Euler solutions in the $C^1$ category (see \cite{Sh} or consider $\textrm{div }(f_\epsilon v_\epsilon h_\epsilon)$) and taking $\epsilon\to 0$ implies that $f=H=0$ and moreover:
$$(\textrm{curl } v)v^\perp =0,$$
in the distributional sense (note that the product makes sense as a distribution in $B^{\alpha-1}_{\infty,\infty}$. Since $v$ is continuous, the set $U$ that is the complement in $\mathbb{S}^2$ of the zero set of $v$ is an open set. What we get from the above equality is that $\textrm{curl } v =0$ on $U$ in the distributional sense. Since $\textrm{div } v=0$ on $\mathbb{S}^2$, we conclude that $v$ is a distributionally harmonic vector field on $U$. In particular, $v$ is smooth on $U$ and is a classically harmonic vector field on $U$. However, we have already that $v=0$ on the complement of $U$. This yields that $v=0$ on $\mathbb{S}^2$. 
\end{proof}
\begin{remark}
    The data $u_0 \in C^\alpha$ with $\alpha>1/2$ implies that $a(\omega)$ is Holder continuous, in $C^{\alpha-1/2}$. In particular, we have essentially ruled out H\"older continuous $a(\omega)$ (of course there remains the endpoint case of $C^{1/2}$ data and strictly $C^0$ $a(\omega)$).
\end{remark}

\subsubsection{Two-Dimensions} 
The result in this section essentially shows that the only isotropically decaying homothetic self-similar solution to the incompressible two-dimensional Navier-Stokes equations is the Oseen vortex. Although a result analogous to Proposition \ref{NSTOEULER} is not readily available in the two-dimensional case, we consider that the blow-down limit of $U$ should also, in some sense, be a "solution" of the Euler equations, which is the motivation for the following proposition. The proof of the following is inspired by the non-existence result of (non-trivial) $(-1)$-homogeneous steady two-dimensional Euler solutions proven in \cite{LS}. 
\begin{proposition}
    Suppose $U$ is a homothetic forward self-similar solution to the two-dimensional Navier-Stokes equations, and suppose that its blow-down limit is $u_0= b(\omega)/r$, for a function
    $b \in L^2(\mathbb{S}^1)$, that solves the two-dimensional Euler equations on $\mathbb{R}^2\setminus\{0\}$ in the distributional sense. Then we necessarily have
    $$b(\omega) = A\omega^\perp,$$
    where $\omega = (\cos(\theta),\sin(\theta))$,
    in polar coordinates. In particular, $U$ is necessarily the Oseen vortex:
    $$U(r,\omega) = A\frac{1-e^{-\frac{r^2}{4}}}{r} \omega^\perp.$$
\end{proposition}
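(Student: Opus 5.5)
In polar coordinates we write $b(\omega) = g(\theta)\,\omega + h(\theta)\,\omega^\perp$, so that $u_0 = r^{-1}(g\,e_r + h\,e_\theta)$ with $g,h\in L^2(\mathbb{S}^1)$. The plan is to reduce the distributional Euler system on $\mathbb{R}^2\setminus\{0\}$ to ODEs on the circle, following \cite{LS}. We then use the second equation in the homothetic characterization (the Ornstein--Uhlenbeck equation \eqref{OU}, i.e. the forward self-similar heat flow) to identify $U$.

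\textbf{Step 1: incompressibility.} Away from the origin we have $\textrm{div}(u_0) = r^{-2}h'(\theta)$ in the distributional sense. Hence $h\equiv A$ is constant.

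\textbf{Step 2: vorticity.} The vorticity is $\textrm{curl}\, u_0 = r^{-2}(\partial_r(r\cdot h) - g'(\theta)) = -r^{-2}g'(\theta)$.

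\textbf{Step 3: the momentum equation.} Since $u_0$ is $(-1)$-homogeneous and $u_0\otimes u_0\in L^1_{loc}(\mathbb{R}^2\setminus\{0\})$ because $b\in L^2$, the pressure is $(-2)$-homogeneous, say $p = r^{-2}P(\theta)$. Testing the weak equation against fields $\phi(r)\psi(\theta)$ splits it into a radial part and an angular part:
\begin{equation*}
g g' - g^2 - h^2 - 2P = 0, \qquad g h' + g h = -P'.
\end{equation*}
Here products such as $gg'$ must be read in the sense $\tfrac12 (g^2)'$, which makes sense for $g\in L^2$. With $h=A$ the system becomes
\begin{equation*}
\tfrac12 (g^2)' = g^2 + A^2 + 2P, \qquad P' = -A g .
\end{equation*}

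\textbf{Step 4: forcing $g\equiv 0$ (the main obstacle).} Differentiating the first equation and using the second gives
\begin{equation*}
\tfrac12 (g^2)'' = (g^2)' - 2A g
\end{equation*}
distributionally on $\mathbb{S}^1$.
\begin{itemize}
\item Integrating the first equation over the circle gives $\int (g^2 + A^2 + 2P) = 0$.
\item Integrating $P' = -Ag$ gives $A\int g = 0$.
\item If $A\neq 0$, the second equation shows $P\in H^1\subset C^0$. Then $(g^2)'\in L^1$, so $g^2\in W^{1,1}$ is continuous, and bootstrapping makes $g$ smooth wherever $g\neq0$.
\end{itemize}
I would then use the energy-type identity from \cite{LS}. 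Set $\Phi := \tfrac12 g^2 + A^2\theta$-type corrections; more precisely, multiply by $e^{-2\theta}$ and integrate:
\begin{equation*}
\left(e^{-2\theta}\tfrac12 g^2\right)' = e^{-2\theta}(A^2 + 2P).
\end{equation*}
Periodicity of $g^2$ combined with the non-periodicity of $e^{-2\theta}$ gives a linear constraint. Combining it with $P' = -Ag$ should force $g^2$ to be constant. Then $(g^2)' = 0$, so $A g = 0$ and $P = -\tfrac12(g^2 + A^2)$ is constant.

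If $A\neq 0$, this yields $g = 0$. If $A=0$, then $g^2$ is constant and $\textrm{curl}\,u_0 = -r^{-2}g'$. For $b\in L^2$ this allows sign-flipping $g = \pm c$, and the real work is ruling such $g$ out. I expect this step to be the hardest. It is exactly where \cite{LS} rules out nontrivial homogeneous flows. Note that distributional divergence-freeness already gave $h$ constant but places no constraint on $g$ beyond the equation, so one must exploit the weak formulation near jump points. I would try testing across a jump, where the flux $g^2$ is continuous but the momentum flux $g|g|$ jumps. Otherwise, I would simply cite the classification of \cite{LS} once $h$ is constant.

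\textbf{Step 5: identifying $U$.} With $u_0 = A\omega^\perp/r$ established, the uniqueness statement of the Proposition in Section 3 (homothetic solutions are uniquely determined by their data, carried out analogously in 2D) shows $U$ is the unique solution of \eqref{OU} with this blow-down limit. The Oseen profile $A(1-e^{-r^2/4})r^{-1}\omega^\perp$ is the forward self-similar heat solution from $A\omega^\perp/r$. Indeed, its vorticity $\tfrac{A}{2}e^{-r^2/4}$ is the heat kernel in self-similar variables, and its blow-down limit is $A\omega^\perp/r$. It also solves steady Euler with a radial pressure. Hence $U$ is the Oseen vortex.
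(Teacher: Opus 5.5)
Your Step 3 is where the argument breaks. Transport along the circle is done by the angular velocity $u_\theta = h/r$, not by $u_r = g/r$. Writing $\mathrm{div}(u_0\otimes u_0)+\nabla p=0$ with $p=r^{-2}P(\theta)$ in polar coordinates, in a form meaningful for $g,h\in L^2$, gives
\begin{equation*}
(gh)' = g^2+h^2+2P, \qquad (h^2)' + P' = 0 .
\end{equation*}
With $h\equiv A$ this yields $P'=0$ and the Riccati equation $A g' = g^2 + A^2 + 2P$. That is exactly the Luo--Shvydkoy ODE the paper's proof is built on. Your system $\tfrac12(g^2)' = g^2+A^2+2P$, $P'=-Ag$ is not the Euler system, so what you extract from it ($A\int g=0$, and $g=0$ when $A\neq0$) is an artifact.

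In fact no argument using only the Euler equations on $\mathbb{R}^2\setminus\{0\}$ can give $g=0$. For every constant $c$, the field $u_0=(c\,\omega+A\,\omega^\perp)/r$ (point source plus point vortex) is curl-free and divergence-free away from the origin. It is therefore a steady Euler solution there, with $p=-(c^2+A^2)/(2r^2)$, and it satisfies all the hypotheses placed on the data. With the correct equation, $A\neq0$ gives $g\in W^{1,1}$, so $g$ is continuous and periodic, hence constant. That constant must then be removed using a fact your proposal never invokes: $U$ is divergence-free on all of $\mathbb{R}^2$.

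This is how the paper finishes. The profile with data $(c\,\omega+A\,\omega^\perp)/r$ is $\frac{1-e^{-r^2/4}}{r}(c\,\omega+A\,\omega^\perp)$, and $\mathrm{div}\bigl(\frac{1-e^{-r^2/4}}{r}\,\omega\bigr)=\frac12 e^{-r^2/4}\neq0$ forces $c=0$. Equivalently, $\mathrm{div}\,U=0$ on $\mathbb{R}^2$ passes to the blow-down limit, and on $\mathbb{R}^2$ one has $\mathrm{div}(g(\theta)\,\omega/r) = \bigl(\int_0^{2\pi}g\,d\theta\bigr)\delta_0$. Your Step 5 uses only uniqueness of the profile, so the radial part of the data is never excluded.

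Step 4 also remains open in the degenerate case $A=0$, where the correct equations give only that $g^2=-2P$ is constant. The tool you propose for this case cannot work. When $h=0$, $u_0\otimes u_0 = g^2\,\omega\otimes\omega/r^2$ depends only on $g^2$, and $\mathrm{div}(g(\theta)\,\omega/r)=0$ on $\mathbb{R}^2\setminus\{0\}$ for every $g\in L^2$. The velocity normal to each ray vanishes, so there is no momentum flux across a jump of $g$. Hence $g(\theta)\,\omega/r$ with $|g|$ constant is a genuine distributional steady Euler solution on the punctured plane.

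Appealing to Luo--Shvydkoy does not help, since their classification is for $C^1$ profiles. Global incompressibility only adds $\int_0^{2\pi} g\,d\theta=0$. Ruling out, e.g., $g=c\,\mathrm{sgn}(\sin\theta)$ requires an argument about $U$ itself: that the Ornstein--Uhlenbeck profile generated by such data does not solve the steady Euler equations.

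The paper's own step from $b\in L^2$ to $b\in W^{1,1}$ via $b'=c_1b^2+c_2$ likewise presupposes a nondegenerate Riccati equation, i.e.\ $A\neq0$. So this case needs separate treatment in either approach. Your proposal, however, leaves it unresolved and handles $A\neq0$ only through incorrect equations.
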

\begin{proof}
    By hypothesis $b(\theta)/r$ has to be a $(-1)$-homogeneous solution to the two-dimensional Euler on $\mathbb{R}^2\setminus\{0\}$. The only such solution where $b \in C^1(\mathbb{S}^1)$ is precisely as stated in the statement of the theorem, as proven by Luo and Shvdkoy in \cite{LS}. Therefore, we just have to show that any $b\in L^2$ is necessarily $C^1$, but this is clear for the following reason. Again from \cite{LS}, we know that, in polar coordinates, $b$ satisfies an ODE of the form:
    $$b' = c_1 b^2+ c_2,$$
    where $c_1,c_2$ are constants. Then if $b\in L^2(S^1)$, we have $b\in W^{1,1}(S^1)$. In one dimension, this implies that $b$ is an absolutely continuous function. It is not difficult to see that the only periodic, absolutely continuous, distributional solutions of the Ricatti equation for $b$ are exactly the constants.

In this case, the data is always odd on the sphere. In particular, from the representation formulas from before, we know that this data must satisfy:

$$b(\omega) = i\hspace{5px}\textrm{P.V.}\int_{\mathbb{S}^1} \frac{a(\Omega)}{\omega\cdot\Omega} d\Omega.$$
By inverting the singular integral transform, (which is straightforward here since the integral transform is diagonalizable and we only deal with first modes) we find that 
$$a(\omega) = \frac{A}{2\pi i} \omega + \frac{B}{2\pi i} \omega^\perp.$$
Essentially, then,  we require that 
$\hat{U}(\rho,\omega) = \rho^{-1}e^{-\rho^2}(A\omega + B\omega^\perp)$
for some complex numbers $A,B$. Therefore, by using the Hankel transform and polar coordinates, we conclude:
$$U(r,\omega) = \frac{1-e^{-\frac{r^2}{4}}}{r}\left( A\omega + B\omega^\perp\right).$$

However, for $U$ to be divergence-free it is clear that $A$ is necessarily zero, which makes $U$ the Oseen vortex. The data is clearly the corresponding homogeneous solution of two-dimensional Euler given by
$\frac{\omega^\perp}{r}$.
\end{proof}
\begin{remark}
If $u_0(r,\omega) = \frac{\omega^\perp}{r}$ exhibits non-uniqueness, then the other forward self-similar solution cannot be homothetic.
\end{remark}

\begin{remark}
    Note that the case of $(-1)$-homogeneous steady states of Euler corresponds to the point vortex solutions, so these cannot correspond to classical distributional solutions in the whole domain $\mathbb{R}^2$. Instead, we believe (but do not prove here) that the blow-down limit of a homothetic solution in $\mathbb{R}^2$ is always a solution of Euler, but in some appropriate sense that can probably be precisely determined.
\end{remark}

\begin{remark}
    The $(-1)$-homogeneous shear flow $$\overline{U}_{1,\beta}(\xi_1,\xi_2)=\left( \frac{\sqrt{\pi}}{2} e^{-\frac{\xi_2^2}{4}} \text{erfi}\left(\frac{\xi_2}{2}\right),0\right)$$ with boundary data $(\mathrm{P.V.} 1/y,0)$, is not ruled out by this theorem since the angular data is far from being in $L^2(\mathbb{S}^1)$.
\end{remark}

\subsection{Support to Decay Relationship}

In this section we use the following calculus trick attributed to Laplace:
\begin{lemma}[Laplace's Method]\label{LAPLACE}
Let $a,b\in \mathbb{R}\cup\{+\infty\}\cup\{-\infty\}$. Let $f\in C^2([a,b])$. Suppose there exists a unique point $x_0\in [a,b]$ such that 
$$f(x_0) = \sup_{x\in [a,b]} f(x),\quad f''(x_0)<0.$$
Then
$$\lim_{n\to \infty} \frac{\int_a^b e^{nf(x)}dx }{e^{nf(x_0)}\sqrt{\frac{2\pi}{n(-f''(x_0))}}}=1.$$
\end{lemma}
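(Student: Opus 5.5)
The plan is the classical Laplace argument: localize near $x_0$, where a second-order Taylor expansion reduces the integral to a Gaussian, and show that the rest of $[a,b]$ is exponentially negligible relative to $e^{nf(x_0)}$.

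Near $x_0$: fix $\varepsilon>0$. Since $f\in C^2$ and $f''(x_0)<0$, continuity of $f''$ gives $\delta>0$ such that
\[
f''(x_0)-\varepsilon \leq f''(y) \leq f''(x_0)+\varepsilon<0
\]
for $|y-x_0|<\delta$. The maximum at $x_0$ must be interior, since the hypothesis $f''(x_0)<0$ together with a maximum only makes sense with $f'(x_0)=0$ (for an endpoint maximum I would note the hypothesis implicitly requires this). Taylor's theorem then gives, on $|x-x_0|<\delta$,
\[
\tfrac12(f''(x_0)-\varepsilon)(x-x_0)^2 \leq f(x)-f(x_0) \leq \tfrac12(f''(x_0)+\varepsilon)(x-x_0)^2 .
\]
Integrating $e^{n(f-f(x_0))}$ over $(x_0-\delta,x_0+\delta)$ and substituting $y=\sqrt n\,(x-x_0)$ sandwiches the local integral between Gaussian integrals. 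Extending to $\mathbb{R}$ costs only an error $O(e^{-cn\delta^2})$. This yields the bounds $\sqrt{2\pi/(n(-f''(x_0)\mp\varepsilon))}\,(1+o(1))$. Letting $n\to\infty$ and then $\varepsilon\to0$ gives the local contribution the asymptotics $\sqrt{2\pi/(n(-f''(x_0)))}$.

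Away from $x_0$: I need $\eta:=f(x_0)-\sup_{|x-x_0|\geq\delta}f(x)>0$. On a compact interval this follows from continuity and uniqueness of the maximizer. Then
\[
\int_{|x-x_0|\ge\delta} e^{n(f-f(x_0))}\,dx \leq e^{-(n-1)\eta}\int_a^b e^{f(x)-f(x_0)}\,dx .
\]
This is $o(n^{-1/2})$ provided $\int_a^b e^{f}<\infty$.

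The main obstacle is the case of infinite endpoints. There uniqueness of the maximizer does not by itself give a positive gap $\eta$, nor integrability of $e^{f}$. I would treat these as implicit hypotheses, namely $\int_a^b e^{f}<\infty$ and $\limsup_{|x|\to\infty}f<f(x_0)$, which hold in the intended applications, and state them explicitly. With these assumptions in place, combining the two regions proves the stated limit.
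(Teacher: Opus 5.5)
The paper gives no proof of this lemma; it is quoted as the classical Laplace method. Your argument is that standard proof. For an \emph{interior} maximizer it is correct: the Gaussian sandwich on $(x_0-\delta,x_0+\delta)$ works, and so does the bound $e^{-(n-1)\eta}\int_a^b e^{f-f(x_0)}\,dx$ on the complement. You are also right that infinite endpoints need extra hypotheses. For example, $f(x)=-x^2/(1+x^2)$ on $\mathbb{R}$ has the unique maximizer $0$ with $f''(0)=-2$, yet $\int_{\mathbb{R}}e^{nf}\,dx=\infty$ because $e^{nf}\ge e^{-n}$. Your added conditions, $\int_a^b e^{f}<\infty$ and $\limsup_{|x|\to\infty}f<f(x_0)$, suffice.

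The step that fails is the claim that the maximizer must be interior. Nothing in the hypotheses forces $f'(x_0)=0$ at an endpoint. For instance, $f(x)=-x-x^2$ on $[0,1]$ has the unique maximizer $x_0=0$ with $f''(0)=-2<0$ but $f'(0)=-1$. There
\[
\int_0^1 e^{nf}\,dx\le\int_0^1 e^{-nx}\,dx\le 1/n,
\]
so the ratio in the lemma is at most $(\pi n)^{-1/2}\to 0$. Even when $f'(x_0)=0$ at an endpoint, as for $f(x)=-x^2$ on $[0,1]$, only half a Gaussian is captured and the ratio tends to $1/2$. Correspondingly, your local step integrates over $(x_0-\delta,x_0+\delta)$, which is not contained in $[a,b]$ in that case. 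So the lemma as literally stated is false for boundary maximizers, and interiority cannot be deduced from the hypotheses. You must add $x_0\in(a,b)$ as an explicit hypothesis, just as you do for the conditions at infinity. With that hypothesis stated, your argument proves the lemma.
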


Recall that for any homothetic solution $U$, there exists some distribution $a \in D'(\mathbb{S}^2,\mathbb{C}^3)$ with $a(-\omega) = \overline{a(\omega)}$ and where
	\begin{equation}
    \label{ATOU}
    \begin{gathered}
		U(\xi) = U_{\mathrm{even}}(\xi) + U_{\mathrm{odd}}(\xi)\\
		U_{\mathrm{even}}(\xi)
		= \frac{\sqrt{\pi}}{2} \int_{\mathbb{S}^2} e^{-\frac{( \omega \cdot \xi)^2}{4}} a(\omega) \, \mathrm{d} \omega ,
		\\
		U_{\mathrm{odd}}(\xi)
		= i \int_{\mathbb{S}^2} 
		D(\tfrac{\omega \cdot \xi}{2}) a(\omega) \, \mathrm{d} \omega.
        \end{gathered}
	\end{equation}

For the purposes of this subsection, let us suppose that $a$ is in fact a vector-valued measure on the sphere. There is a natural geometric relationship between the support of the measure $a$ and the decay of the homothetic solution $U$ that we describe in:

\begin{theorem}[Support to Decay Relationship]
\label{SUPPORTODECAY}
Let $U$ be a homothetic solution given by Equation \eqref{ATOU}, where $a(\omega) = \mu(\omega)$ some finite vector-valued Borel measure on $\mathbb{S}^2$ with support $\Sigma$.
For any $\omega\in \mathbb{S}^2$, let
\[
G_\omega:=\{\Omega\in \mathbb{S}^2:\omega\cdot\Omega=0\}.
\]
be the great circle of directions perpendicular to $\omega$. 

The asymptotic decay of $U(s\omega)$ as $s \to \infty$ (i.e. along rays in the direction $\omega$ from the origin) is determined by how $G_\omega$ intersects $\Sigma$ and the parity of $U$ in the following way:\\\\
\noindent \textrm{Support to Decay Relationship for the Even Part}
\begin{enumerate}
    \item \textrm{Zero-Dimensional Support:} If $\Sigma = \{\pm \omega_0\}$ for some fixed $\omega_0 \in \mathbb{S}^2$, then $U(\xi)$ does not decay for all $\xi \perp \omega_0$ (the orthogonal plane), and $U(\xi)$ decays exponentially ($\mathcal{O}(e^{-s^2}))$ in all other directions.
    \item \textrm{One-Dimensional Support:} If $\Sigma = G_{\omega_0}$, then $U(\xi)$ does not decay for $\xi = \pm \omega_0$ (parallel to the orthogonal axis), and $U(\xi)$ decays as $\mathcal{O}(s^{-1})$ in all other directions.
    \item \textrm{Two-Dimensional Support:} If $\Sigma= \mathbb{S}^2$, then $U(\xi)$ decays isotropically as $\mathcal{O}(s^{-1})$ in all directions.
\end{enumerate}

\vspace{0.5em}
\noindent \textrm{Support to Decay Relationship for the Odd Part}
\begin{enumerate}
    \item \textrm{Zero-Dimensional Support} If $\Sigma = \{\pm \omega_0\}$, then $U(\xi) \equiv 0$ for all $\xi \perp \omega_0$, and $U(\xi)$ decays as $\mathcal{O}(s^{-1})$ in all other directions.
    \item \textrm{One-Dimensional Support:} If $\Sigma = G_{\omega_0}$, then $U(\xi) \equiv 0$ for $\xi = \pm \omega_0$, and $U(\xi)$ decays as $\mathcal{O}(s^{-1})$ in all other directions.
    \item \textrm{Two-Dimensional Support:} If $\Sigma=\mathbb{S}^2$, then $U(\xi)$ decays isotropically as $\mathcal{O}(s^{-1})$ in all directions.
\end{enumerate}
\end{theorem}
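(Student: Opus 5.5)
Fix a direction $\omega\in\mathbb{S}^2$, put $\xi=s\omega$, and write the even and odd parts of \eqref{ATOU} as integrals against the measure $\mu$:
\begin{equation*}
U_{\mathrm{even}}(s\omega)=\frac{\sqrt{\pi}}{2}\int_{\Sigma} e^{-\frac{s^2(\omega\cdot\Omega)^2}{4}}\,\mathrm{d}\mu(\Omega),\qquad
U_{\mathrm{odd}}(s\omega)=i\int_{\Sigma} D\!\left(\tfrac{s\,\omega\cdot\Omega}{2}\right)\mathrm{d}\mu(\Omega).
\end{equation*}
The asymptotics as $s\to\infty$ depend only on the function $\Omega\mapsto\omega\cdot\Omega$ restricted to $\Sigma$, and in particular on its zero set $G_\omega\cap\Sigma$. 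The Gaussian kernel concentrates on $G_\omega$ with width $s^{-1}$. The Dawson kernel behaves like $s^{-1}(\omega\cdot\Omega)^{-1}$ away from $G_\omega$, by the bound $|D(y)-1/(2y)|\le |y|^{-3}$ used in the blow-down proposition, and it is odd and bounded by $\sup|D|$ near $G_\omega$. I will treat the three support geometries case by case, and in each case I will assume $\mu$ has a bounded density with respect to the natural measure on $\Sigma$ (counting, arclength, or area). Otherwise the decay rates can be degraded, so this is an implicit hypothesis.

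\emph{Zero-dimensional support.} Here $\mu=c\,\delta_{\omega_0}+\bar c\,\delta_{-\omega_0}$, so everything is explicit. For the even part, $U=\tfrac{\sqrt\pi}{2}(c+\bar c)e^{-s^2(\omega\cdot\omega_0)^2/4}$. This is constant in $s$ when $\omega\perp\omega_0$ and Gaussian-decaying otherwise. For the odd part, $U=i(c-\bar c)D(s\,\omega\cdot\omega_0/2)$. This vanishes identically when $\omega\cdot\omega_0=0$ because $D(0)=0$, and equals $\mathcal O(s^{-1})$ otherwise by the asymptotics of $D$.

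\emph{One-dimensional support.} Here $\Sigma=G_{\omega_0}$ and $\mathrm{d}\mu=g(\theta)\,\mathrm{d}\theta$. If $\omega=\pm\omega_0$, then $\omega\cdot\Omega\equiv0$ on $\Sigma$. The even kernel is then identically $\tfrac{\sqrt\pi}{2}$, which gives the constant $\tfrac{\sqrt\pi}{2}\int g$, so there is no decay. The odd kernel is $D(0)=0$, so the odd part vanishes identically. If $\omega\neq\pm\omega_0$, then $\omega\cdot\Omega(\theta)=\rho\cos(\theta-\theta_1)$ with $\rho>0$, which has two simple zeros. Near each zero, I change variables to $t=s\rho\cos(\theta-\theta_1)$. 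For the even part, $\int e^{-t^2/4}\,g\,\mathrm{d}t/(s\rho|\sin|)$ gives $\mathcal O(s^{-1})$, in the spirit of Lemma~\ref{LAPLACE}. For the odd part, the region away from the zeros contributes $s^{-1}\int g/(\omega\cdot\Omega)$ in principal-value form. Near the zeros, $\int_{|t|\le M}D(t/2)\,g\,\mathrm{d}t/s$ is $\mathcal O(s^{-1}\log s)$ crudely and $\mathcal O(s^{-1})$ once the oddness of $D$ cancels the leading term. This matches the argument already used in the blow-down proposition, and I will repeat it uniformly in $s$.

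\emph{Two-dimensional support.} I use coordinates on $\mathbb{S}^2$ with pole $\omega$, writing $\omega\cdot\Omega=\cos\phi=z$, so that $\mathrm{d}\Omega=\mathrm{d}z\,\mathrm{d}\theta$. Then
\begin{equation*}
U(s\omega)=\int_{-1}^{1}K(sz)\,G(z)\,\mathrm{d}z,\qquad G(z)=\int_0^{2\pi}g\,\mathrm{d}\theta \in L^\infty,
\end{equation*}
where $K$ is the even or odd kernel. The even case gives $\mathcal O(s^{-1})$ by scaling $t=sz$, since $\int e^{-t^2/4}\,\mathrm{d}t<\infty$. The odd case is the one-dimensional principal-value estimate above, now applied to $G$.

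\emph{Main obstacle.} The hardest step is the uniform $\mathcal O(s^{-1})$ bound for the odd part when $\Sigma$ is one- or two-dimensional. The kernel $1/(sz)$ is not integrable, so the bound needs either a cancellation or some regularity of the density $g$ (respectively $G$) across $z=0$. I would first try a Dini or H\"older condition at $z=0$ to control $\mathrm{P.V.}\int G(z)/z\,\mathrm{d}z$. If that is too strong, I would settle for $\mathcal O(s^{-1}\log s)$ for merely bounded densities and note this caveat.
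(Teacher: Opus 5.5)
Your proposal follows essentially the same case-by-case route as the paper. It evaluates the antipodal atoms explicitly, uses concentration of the Gaussian kernel on $G_\omega\cap\Sigma$ for the even part, and for the odd part uses the $1/(2y)$ asymptotics of the Dawson function with a cancellation at $\omega\cdot\Omega=0$, after reducing to a one-dimensional integral in $z=\omega\cdot\Omega$; your direct scaling bounds replace the paper's Laplace's method. The bounded-density and Dini/H\"older hypotheses you state explicitly are exactly what the paper's proof uses tacitly: it assumes uniform densities in its Laplace computations and, in the odd case, $A(x)\approx A'(0)x$ via the oddness of $a$. Your remark that merely bounded densities give only $\mathcal{O}(s^{-1}\log s)$ for the odd part is also correct.
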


\begin{proof}
Let $\xi=s\upsilon$ with $s = |\xi| \to \infty$ and $\upsilon \in \mathbb{S}^2$ a fixed direction. We analyze the asymptotic behavior of the integral representations for the even and odd parts of a homothetic solution $U$. We begin our analysis with the even part.

As $s \to \infty$, the Gaussian in the integral representation of $U_{\textrm{even}}$ exponentially decays unless $\omega \cdot \upsilon = 0$. Thus, any large contributions to the integral come from the intersection $\Sigma \cap G_\upsilon$.

\begin{itemize}
    \item \textrm{Case 1:} Let $a = \delta_{\omega_0} + \delta_{-\omega_0}$. The integral evaluates to the function $ e^{-(\omega_0 \cdot \xi)^2/4}$. If $\xi \perp \omega_0$, then $\omega_0 \cdot \xi = 0$, yielding a constant. If $\xi \not\perp \omega_0$, then we get exponential decay.
    
    \item \textrm{Case 2:} Let $\Sigma = G_{\omega_0}$. If $\upsilon = \pm \omega_0$, then $\Sigma= G_{\omega_0}=G_\upsilon$, which implies $\omega \cdot \xi = 0$ everywhere on $\Sigma$. The integrand is then identically $1$, which yields a constant. If $\upsilon \neq \pm \omega_0$, then the great circles $\Sigma=G_{\omega_0}$ and $G_\upsilon$ intersect transversely at exactly two points. By Lemma \ref{LAPLACE}, the integral decays as $\mathcal{O}( (s^2)^{-1/2} ) = \mathcal{O}(s^{-1})$. Indeed, after rotating the frame, we may write the integral on the great circle as (up to a constant prefactor)
    $$\int_0^{2\pi} e^{-\frac{(s\upsilon\cdot(\cos(\theta),\sin(\theta),0))^2}{4}}d\theta = \int_0^{2\pi} e^{-s^2\frac{\upsilon_1^2+\upsilon_2^2}{4}\cos^2(\theta- u)}d\theta \approx 1/s,$$
    where $\upsilon = \sqrt{\upsilon_1^2+\upsilon_2^2}(\cos(u), \sin(u),\upsilon_3)$. We may have to split the integral into parts in order to apply Lemma \ref{LAPLACE}, but the final result remains unchanged.
    
    \item \textrm{Case 3:} Since $\Sigma=\mathbb{S}^2$, the support of $\mu$ covers the great circle $G_\upsilon$ for any $\upsilon$. Without loss of generality, we now let $\upsilon = (0,0,1)$. Using spherical coordinates $(\varphi,\theta)$, we write the integral as
    $$\int_0^{\pi}\int_0^{2\pi} e^{s^2(-\cos^2(\theta)/4)}\sin(\theta)d\theta d\varphi, $$
    which is once more $\mathcal{O}(s^{-1})$ by Lemma \ref{LAPLACE}.
\end{itemize}

For the odd part the kernel is $D(s \omega \cdot \upsilon / 2)$, where $D$ is the Dawson function, which is odd and satisfies the asymptotic expansion $D(z) = \frac{1}{2z} + \mathcal{O}(z^{-3})$ for large $|z|$. 

\begin{itemize}
    \item \textrm{Case 1:} Let $a = \delta_{\omega_0} - \delta_{-\omega_0}$. The integral evaluates pointwise, up to constant prefactor, to $ D(s \omega_0 \cdot \upsilon / 2)$. If $\xi \perp \omega_0$, then $\omega_0 \cdot \xi = 0$ and $U \equiv 0$ in this plane. If $\upsilon \not\perp \omega_0$, then $|\omega_0 \cdot \xi| \neq 0$, so, using the asymptotic expansion $D(z/2) \sim (z)^{-1}$, the decay is $\mathcal{O}(s^{-1})$.

    \item \textrm{Case 2:} Let $\Sigma = G_{\omega_0}$. If $\upsilon = \pm \omega_0$, then $\omega \cdot \xi = 0$ everywhere on $\Sigma$. The integral thus evaluates to  $0$. If $\xi \neq \pm \omega_0$, $\Sigma$ intersects $G_\upsilon$ at two antipodal points. Let $\omega\cdot\upsilon = x$ and let $A(x)$ be the integral of $a(\omega)$ on the circle determined by $x$. Then the odd part's integral representation is now
    $$\int_{-1}^1 D((sx)/2) A(x) dx.$$
    Since $a(\omega)$ is odd, $A(x)$ remains odd, with series expansion $A(x)\approx A'(0)x$ at $x=0$. Therefore, up to constant prefactor:
    $$\int_{-1}^1 D((sx)/2)A(x) dx \approx s^{-2}\int_{-s/2}^{s/2} uD(u)du \approx s^{-1}.$$

    \item \textrm{Case 3:} Follows as in Case 2. \qedhere
\end{itemize}
\end{proof}

Note that, with the exception of the case of full support on $\mathbb{S}^2$, our Theorem \ref{SUPPORTODECAY} refers to decay along rays from the origin, not to isotropic decay For example, although the Okamoto shear flow discussed in the introduction decays along every ray from the origin, it does not isotropically decay since it is not $\mathcal{O}(1/|x|)$.

To use our criterion for non-uniqueness (or to construct a plausible candidate for the Jia-Šverák program), we need to construct a homothetic solution that decays isotropically. Since $U$ is homothetic, $U$ is obtained via singular integrals on the sphere from some measure $\mu$ on the sphere. Theorem \ref{SUPPORTODECAY} shows that for isotropic decay, it suffices to find a measure $\mu$ with full support on the sphere. Measures with this property include measures that are absolutely continuous with respect to the surface measure on the sphere. While it is easy to construct self-similar solutions to the heat equation from such measures, it is not at all clear that the corresponding vector field $U$ solves the Euler equations (which corresponds to a strong geometric constraint on the measure $\mu$).



\section{Examples of Homothetic Solutions}
\label{sec:examples}

\subsection{Atomic Measures}

Every pair of Dirac masses supported at antipodal points $\pm \omega_0\in \mathbb{S}^2$ corresponds to what we term an Okamoto shear flow.

Let $\omega_0 \in S^2$ and let $\omega_1\in S^2$ be orthogonal to $\omega_0$ (in particular $\omega_1$ lies on the great circle $G_{\omega_0}$). Consider the distribution 
$$a(\omega) = (\delta_{\omega_0}(\omega) + \delta_{-\omega_0}(\omega))\omega_1.$$
Evidently, this distribution satisfies the symmetry and divergence-free conditions we require on $a(\omega)$, in particular $a(-\omega)=\overline{a(\omega)}$ and $\omega\cdot a(\omega)=0$. Also, it is clear that this is an even distribution, i.e. $a(-\omega) = a(\omega)$. Therefore, the corresponding solution to the Ornstein-Uhlenbeck equation is, according to Equation \eqref{ATOU} and up to constant prefactor,
$$U(r,\omega) = \int_{S^2} e^{-\frac{(\omega\cdot\Omega)^2 r^2}{4}} a(\Omega)d\Omega=\int_{S^2} e^{-\frac{(\omega\cdot\Omega)^2 r^2}{4}} \left((\delta_{\omega_0}(\Omega) + \delta_{-\omega_0}(\Omega))\omega_1\right)d\Omega .$$
Therefore, up to an unimportant constant prefactor:
$$U(r,\omega) = \omega_1 e^{-\frac{(\omega\cdot\omega_0)^2 r^2}{4}}.$$
To illustrate this example more clearly, let $\omega_0 = (0,1,0)$ and let, for instance $\omega_1= (1,0,0)$. Then we can say:
$$U(\xi_1,\xi_2,\xi_3) = \left( e^{-\xi_2^2/4},0,0\right).$$
This solution corresponds to a forward self-similar solution of Navier-Stokes from vortex sheet initial data $(\delta_0(y),0,0)$, which lies in $BMO^{-1}$, and was described by Okamoto in \cite{O}.

The other one-dimensional shear flow described by Okamoto comes from the distribution 
$$a(\omega) = -i\left( \delta_{\omega_0}(\omega) - \delta_{-\omega_0}(\omega)\right)\omega_1.$$
Indeed, in this case the distribution satisfies the symmetry and divergence-free conditions and is odd, so we only see the odd part:
$$U(r,\omega) = \int_{S^2} D\left(\frac{r(\omega \cdot\Omega)}{2}\right)\left( \delta_{\omega_0}(\Omega) - \delta_{-\omega_0}(\Omega)\right)\omega_1d\Omega.$$
Therefore, up to an unimportant constant prefactor:
$$U(r,\omega) = \omega_1 D\left(\frac{r(\omega\cdot\omega_0)}{2}\right).$$
To illustrate this example more clearly, let $\omega_0 = (0,1,0)$ and $\omega_1=(1,0,0)$, then we can say:
$$U(\xi_1,\xi_2,\xi_3) = \left( D(\xi_2/2), 0,0\right),$$
which is the other shear flow found by Okamoto. We also recognize this as the shear flow for which we shall later prove instability as promised in Theorem \ref{UNSTABLE} of the introduction.

Formally, an Okamoto shear flow is of the form:
$$S(\xi) =\omega_1 F(\xi \cdot \omega_0),$$
where $\omega_1\perp \omega_0$ and $F$ is either the Gaussian or Dawson function. As done with the Mikado flow building blocks of convex integration, it is clear that the sum of any two Okamoto shear flows $S(\xi)$ and $S'(\xi)$ with direction $\omega_1'$ and decay in $\omega_0'$ direction remains a homothetic self similar solution if and only if 
$$\omega_1 \cdot \omega_0' =0 \textrm{ and } \omega_1'\cdot\omega_0 =0,$$
which, in $\mathbb{R}^3$, already forces at least one pair of directions to be parallel.

\subsection{Measures Supported on Curves}
Now we recover another example of Okamoto's. Let $\omega_0\in S^2$ be arbitrary, and consider the great circle $G_{\omega_0}$ of vectors in $S^2$ orthogonal to $\omega_0$.
Consider the following vector valued measure supported on the great circle $G_{\omega_0}$
$$a(\omega) = \delta_0(\omega\cdot\omega_0)\omega_0.$$
This distribution is even, so we get
$$U(r,\omega) =\int_{S^2} e^{-\frac{(\omega\cdot\Omega)^2 r^2}{4}} \delta_0(\Omega\cdot\omega_0)\omega_0d\Omega. $$
For illustrative purposes, let $\omega_0 = (1,0,0)$.
The integral above now becomes an integral over the great circle $\gamma :=G_{\omega_0}$, which we may parametrize by $\gamma(t)= (0, \cos(t), \sin(t))$. We get:
$$U(r,\omega) = (1,0,0)\int_0^{2\pi}e^{-\frac{r^2}{4}(\omega_2\cos(t)+\omega_3\sin(t))^2}dt. $$
However we note that up to unimportant prefactors:
$$
\int_{0}^{2\pi} \exp\left(-\frac{r^{2}}{4}(\omega_{2}\cos t+\omega_{3}\sin t)^{2}\right)dt
=
\exp\left(-\frac{r^{2}}{8}(\omega_{2}^{2}+\omega_{3}^{2})\right)
I_{0}\left(\frac{r^{2}}{8}(\omega_{2}^{2}+\omega_{3}^{2})\right),
$$
where $I_0$ denotes the modified Bessel function of the first kind.

In cylindrical coordinates with axis parallel to $\omega_0=(1,0,0)$, we have
$$U(x_1,\rho,\theta) = \left( e^{-\rho^2/8}I_0(\rho^2/8),0,0\right),$$
which is an axisymmetric shear flow identified by Okamoto in \cite{O}.

Our next particular solution shows that if $a(\omega)$ has the same support but a different vector field along the support, the corresponding homothetic solution can be quite different. Indeed, let $\omega_0\in S^2$ be arbitrary, and consider the great circle $G_{\omega_0}$ of vectors in $S^2$ orthogonal to $\omega_0$.
Consider the distribution supported on the great circle $G_{\omega_0}$.
$$a(\omega) = -i\delta_0(\omega\cdot\omega_0)(\omega\times \omega_0).$$
This is an odd distribution that respects the symmetry condition to recover a real-valued vector field. In particular, because $a$ is odd we have
$$U(r,\omega) = \int_{S^2} D\left(\frac{r(\omega \cdot\Omega)}{2}\right)\left(\delta_0(\Omega\cdot\omega_0)(\Omega\times \omega_0)\right)d\Omega.$$
Once again, this is simply an integral over the circle $G_{\omega_0}$, which we parametrize by $(0,\cos(t),\sin(t))$. The resulting integral is
$$U(r,\omega) = \int_0^{2\pi} D\left(\frac{r}{2}(\omega_2\cos(t)+\omega_3\sin(t))\right)(0,\sin(t),-\cos(t))dt.$$
We can evaluate the integral and get, up to unimportant prefactors:
$$U(\xi_1,\xi_2,\xi_3) = \frac{1-e^{-(\xi_2^2+\xi_3^2)/4}}{\xi_2^2+\xi_3^2}\left(0, -\xi_3, \xi_2\right).$$
In cylindrical coordinates, we see that 
$$U(x_1,\rho,\theta) = \frac{1-e^{-\rho^2/4}}{\rho}e_\theta,$$
which is a pure swirl vector field. Essentially, this flow is the Oseen vortex lifted to three dimensions. We leave open the question whether another vector field distribution on the great circle can lead to a homothetic self-similar solution. 

The Okamoto shear flow and the lifted Oseen vortex can both be modified in the following way. Instead of having a distribution with support on the entire great circle, we allow the distribution to be supported on antipodal partial arcs of the great circle. If one expands the analytic integrand as a series and integrates term-by-term, one can find a series representation for the solutions in this case. Note that the lifted Oseen vortex is the only homothetic self-similar solution of the form $U = u_\theta(r) e_\theta$, so the modification to the lifted Oseen vortex gets us a vector field that is, of course, not pure swirl.

First we consider the distribution
$$a(\omega) = \delta_0(\omega\cdot\omega_0)\omega_0(\chi_A(\omega) + \chi_{-A}(\omega)),$$
where $\omega_0\in S^2$ is fixed and $A$ is some fixed subarc of the great circle $G_{\omega_0}$ that is disjoint from $-A$. This remains an even, real-valued distribution and has the symmetry property $a(\omega)= \overline{a(-\omega)}$. Since the distribution is even, we get
$$U(r,\omega) =\int_{S^2} e^{-\frac{(\omega\cdot\Omega)^2 r^2}{4}}\delta_0(\Omega\cdot\omega_0)\omega_0(\chi_A(\Omega) + \chi_{-A}(\Omega)) d\Omega. $$
For illustrative purposes, let $\omega_0 = (1,0,0)$.
The integral above becomes an integral over the subarcs of the circle $\gamma :=G_{\omega_0}$ corresponding to $A$ and $-A$, and we may parametrize the curve $\gamma$ by $\gamma(t)= (0, \cos(t), \sin(t))$. Suppose the subarcs are exactly: $[\theta_1,\theta_2]$ and $[\pi+\theta_1,\pi+\theta_2]$. We get:
$$U(r,\omega) = (1,0,0)\left(\int_{\theta_1}^{\theta_2}e^{-\frac{r^2}{4}(\omega_2\cos(t)+\omega_3\sin(t))^2}dt +\int_{\theta_1+\pi}^{\theta_2+\pi}e^{-\frac{r^2}{4}(\omega_2\cos(t)+\omega_3\sin(t))^2}dt\right). $$
Now let $\phi$ be the angle such that
$$\cos(\phi) = \frac{\omega_2}{\sqrt{\omega_2^2+\omega_3^2}}, \quad \sin(\phi) =  \frac{\omega_3}{\sqrt{\omega_2^2+\omega_3^2}}.$$
In addition, let $\rho = \sqrt{x_2^2+x_3^2}$. Then:
$$U(r,\omega) = (1,0,0)\left(2e^{-\rho^2/8}\int_{\theta_1-\phi}^{\theta_2-\phi}e^{-\frac{\rho^2}{8}\cos(2t)}dt \right). $$
We use the series expansion for $e^{-z}$ at $z=0$:
$$ e^{-z} = \sum_{n\geq 0} \frac{(-1)^n}{n!} z^n.$$
Therefore, we may integrate term by term and get
$$U(r,\omega) = (2,0,0)e^{-\rho^2/8} \sum_{n\geq 0}\int_{\theta_1-\phi}^{\theta_2-\phi} \frac{(-1)^n}{n!} \left(\frac{\rho^2}{8} \cos(2t)\right)^ndt,$$
or
$$U(r,\omega) = (2,0,0)e^{-\rho^2/8} \sum_{n\geq 0}\frac{(-1)^n}{n!}\left(\frac{\rho^2}{8}\right)^n\int_{\theta_1-\phi}^{\theta_2-\phi}  \cos^n(2t)dt.$$

   We observe that the solution $U$ we construct is no longer axisymmetric, unless $\theta_2-\theta_1=\pi$ or $2\pi$, for which the integral above becomes independent of the angular variable and the solution reduces to a constant multiple of the previously discussed Okamoto shear flow. We also remark that since the profile above remains a shear flow, it is clearly a stationary solution of the three-dimensional Euler equations and therefore a homothetic solution. Let us compute the corresponding "data at infinity" by returning to the following general form of the profile:
   $$U(r,\omega) = (1,0,0)\left(2\int_{\theta_1-\phi}^{\theta_2-\phi}e^{-\frac{\rho^2}{4}\cos^2(t)}dt \right). $$
   Therefore, $RU(R r,\omega)$ is
   $$RU(Rr,\omega) = (1,0,0)\left(2R\int_{\theta_1-\phi}^{\theta_2-\phi}e^{-R^2\frac{\rho^2}{4}\cos^2(t)}dt \right). $$
As $R\to \infty$, we see that the integral only contributes something nontrivial in the limit if $\cos(t)=0$ somewhere in the interval $[\theta_1-\phi, \theta_2-\phi]$. Let $t_k$ be the roots of $\cos(t)$ in the interval, and let $N(\phi)$ be the number of roots in the interval. Then we have
for some arbitrary $\epsilon>0$:
$$R\int_{\theta_1-\phi}^{\theta_2-\phi} e^{\tfrac{-(R\rho)^2}{4}\cos^2(t)}dt = R\int_K e^{\tfrac{-(R\rho)^2}{4}\cos^2(t)}dt+\sum_{t_k\in [\theta_1-\phi,\theta_2-\phi]} R\int_{t_k-\epsilon}^{t_k+\epsilon} e^{\tfrac{-(R\rho)^2}{4}\cos^2(t)}dt. $$
Here we denoted $K$ to be the (compact) complement of the union of intervals $(t_k-\epsilon,t_k+\epsilon)$ relative to the original interval depending on $\phi$. Since $\cos(t)$ does not vanish on $K$, as $R\to \infty$, the corresponding integral vanishes. For each term in the sum we have approximately:
$$\lim_{R\to \infty} \frac{R\rho}{\rho} \int_{t_k-\epsilon}^{t_k+\epsilon} e^{-\frac{(R\rho)^2}{4}\cos^2(t)}dt \approx\lim_{R\to \infty} \frac{R\rho/2}{\rho/2} \int_{t_k-\epsilon}^{t_k+\epsilon} e^{-\frac{(R\rho)^2}{4}(t-t_k)^2}dt= $$
$$= \frac{2}{\rho}\lim_{M\to \infty} M\int_{t_k-\epsilon}^{t_k+\epsilon} e^{-M^2(t-t_k)^2}dt = \frac{2}{\rho}\lim_{M\to \infty} \sqrt{\pi}\textrm{Erf}(M\epsilon) = \frac{2\sqrt{\pi}}{\rho}.$$
Therefore, up to an unimportant prefactor, we see that the data at infinity should be
$$u_0(\rho,\phi,x_1) = \frac{N(\phi)}{\rho}.$$
For illustrative purposes, if $\theta_1=\pi/2$ and $\theta_2=\pi$ as in our explicit example from before, then $N(\phi)$ is a step function:
$$N(\phi):= \begin{cases}
1, \textrm{ when } \phi\in [0,\pi/2] \cup [\pi, 3\pi/2]\\
0, \textrm{ when } \phi \in (\pi/2, \pi) \cup (3\pi/2, 2\pi).
\end{cases}$$
Note how the data corresponds to the support of the measure we began the analysis with. One can perform a similar construction and consider "subarc" constructions of the lifted Oseen vortex. We leave this construction to the interested reader.

The case just discussed, when the measure is supported on a curve, gets us a profile whose blow-down limit is essentially $1/\rho$, which fails to be in $L^2_{loc}$ (and therefore cannot be a weak solution to the Euler equations in $\mathbb{R}^3$). This does not contradict our Proposition \ref{NSTOEULER}, since that result required that the homothetic solution lie in the Morrey space $\mathcal{M}^3_q$ with $q>2$, while the profiles constructed above lie in $\mathcal{M}^3_q$ with $q<2$. Nevertheless, we know that small-data global well-posedness results have been proven in the class $\mathcal{M}^3_q$ for any $q>1$, so the question of uniqueness from the simple data $\beta/\rho$ for large $\beta$ remains open and interesting.

\subsection{Instability}
Let us denote the Okamoto shear flow by 
$$\overline{U}_{1,\beta}(\xi_1,\xi_2,\xi_3)=\left( \beta\frac{\sqrt{\pi}}{2} e^{-\frac{\xi_2^2}{4}} \text{erfi}\left(\frac{\xi_2}{2}\right),0,0\right).$$
As just discussed, the vector field $\overline{U}_{1,\beta}$ is an anisotropically decaying self-similar solution to the Navier-Stokes equations from initial data:
$$\overline{u}_{1,\beta}(x,y,z) = \textrm{P.V. } \left( \frac{\beta}{y},0,0\right).$$

The goal of this subsection is to prove Theorem \ref{UNSTABLE} from the introduction, or:
\begin{theorem}\label{thm:unstable}
    Let $L_\phi$ be the linearization of the Euler equations around $\overline{U}_{1,\beta}$. Then there exists $\lambda \in \sigma_{ap}(L_\phi, L^2(\mathbb{R}^3))$ with $\textrm{Re}(\lambda)>0$.
\end{theorem}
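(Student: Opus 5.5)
The plan is to reduce the question to the classical Rayleigh stability problem for the one-dimensional shear profile $u(y):=\beta\frac{\sqrt{\pi}}{2}e^{-y^2/4}\mathrm{erfi}(y/2)=\beta D(y/2)$, up to normalization. We then produce an unstable normal mode and localize it into approximate eigenfunctions in $L^2(\mathbb{R}^3)$.

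The profile $u$ is odd, smooth and bounded, and decays like $\beta/y$. It has inflection points: $u''$ vanishes at $y=0$ by oddness, and also at $\pm y_*$, where the Dawson function changes concavity. We look for normal modes of the linearized Euler operator around $(u(\xi_2),0,0)$ of the form
\[
V(\xi)=\nabla^\perp_{(\xi_1,\xi_2)}\bigl(\psi(\xi_2)e^{ik\xi_1}\bigr)\quad(\text{no }\xi_3\text{-component and no }\xi_3\text{-dependence}),
\]
with growth rate $\lambda=-ikc$. This leads to the Rayleigh equation
\[
(u-c)(\psi''-k^2\psi)-u''\psi=0 .
\]

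First we construct an unstable mode, that is, a solution with $\operatorname{Im} c>0$ and $\psi\in H^1(\mathbb{R})$. The approach I would follow is the Tollmien/Lin neutral-mode argument for inflection-point profiles. At the inflection value $c_s=u(y_s)$, take $y_s=0$, so $c_s=0$. The Sturm–Liouville operator $-\partial_y^2-u''/u$ has potential $-u''/u$, which is smooth near $0$ because $u''(0)=u(0)=0$. This potential tends to $0$ at infinity, since $u''/u\sim 2/y^2$. We need this operator to have a negative eigenvalue $-k_s^2$. I would verify this with a test function, using that $-u''/u<0$ on a neighborhood of the origin; this is a computation with the Dawson ODE $D'=1-2zD$.

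The expected difficulty is the $2/y^2$ tail, which is positive and so repulsive, but it does not prevent negative eigenvalues if the well is deep enough. Given the neutral mode $(k_s,c_s=0)$, the standard perturbation result (Lin; see also Drazin–Reid and Howard's semicircle theorem) gives unstable modes $c(k)$ with $\operatorname{Im} c>0$ for $k$ slightly below $k_s$. The requisite sign condition is an integral involving $\int |\phi_s|^2/(u-c_s)^2\,u'\,dy$ together with principal-value terms. This step, checking the hypotheses of the inflection-point bifurcation theorem for this specific non-monotone profile on the whole line with slow decay, is the main obstacle. If it fails, I would instead try a direct shooting or continuity argument in $c$ using the Wronskian of decaying solutions at $\pm\infty$.

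Second, since $e^{ik\xi_1}\psi(\xi_2)$ is not in $L^2(\mathbb{R}^3)$, we pass to approximate eigenfunctions. Let $\lambda=-ikc$, so $\operatorname{Re}\lambda=k\operatorname{Im}c>0$, and let $V$ be the exact normal mode. Set $V_n=\mathbb{P}\bigl(\chi(\xi_1/n)\chi(\xi_3/n)V\bigr)$ with a fixed cutoff $\chi\in C_c^\infty$, normalized in $L^2$. Here $\|\chi(\cdot/n)\chi(\cdot/n)V\|_{L^2}\sim n$. The commutator terms $(L_\phi-\lambda)V_n$ involve only derivatives of the cutoffs, each of size $O(1/n)$ relative to the main term, because $u$ and $\psi$ are bounded and the cutoff derivatives carry a factor $n^{-1}$. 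The Leray projection is bounded on $L^2$, and the pressure is recovered from the exact mode. Hence $\|(L_\phi-\lambda)V_n\|/\|V_n\|\to0$, so $\lambda\in\sigma_{ap}(L_\phi,L^2(\mathbb{R}^3))$ with $\operatorname{Re}\lambda>0$.
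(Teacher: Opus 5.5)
Your overall route is the same as the paper's. You use Rayleigh's equation for the odd profile $u(y)=\beta D(y/2)$, obtain an unstable normal mode from Lin's theory via a negative eigenvalue of a Sturm--Liouville operator, and then localize the non-$L^2$ mode into approximate eigenfunctions. The paper does the last step with Squire's theorem and a wave-packet argument; your $\xi_3$-independent mode and explicit cutoffs do the same job.

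\textbf{Sign error in the spectral step.} As written, this step would fail. Setting $c=0$ in Rayleigh's equation gives $-\psi''+(u''/u)\psi=-k^2\psi$. So the relevant operator is $-\partial_y^2+u''/u=-\partial_y^2-K$, where
\[
K(y)=-\frac{u''}{u}=\frac14\Bigl(\frac{y}{D(y/2)}-y^2+2\Bigr)=1-\frac{y^2}{6}+O(y^4)\quad (y\to 0),\qquad K(y)\sim -\frac{2}{y^2}\quad (|y|\to\infty).
\]
Thus $-u''/u$ is \emph{positive} near the origin, contrary to your claim; the well comes from $u''/u\approx -1$ there. For the operator $-\partial_y^2-u''/u$ that you wrote, the origin is a barrier and the tail $-2/y^2$ is attractive and supercritical for Hardy's inequality ($2>1/4$). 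That operator has infinitely many negative eigenvalues, none of which are neutral Rayleigh modes. Your description of the $2/y^2$ tail as repulsive is correct only for the corrected operator.

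\textbf{The step you call the main obstacle.} This needs no case-specific sign integral or shooting argument. The paper simply applies Lin's theorem for odd $C^2$ profiles on the whole line. As quoted there, its hypotheses are only that $K$ is bounded and tends to $0$, both immediate from the formula above; non-monotonicity and the $1/y$ decay of $u$ play no role.

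\textbf{The negative eigenvalue.} Lin's theorem does require a negative eigenvalue $-\alpha_0^2$, and neither you nor the paper actually verifies it. Here is a short argument that replaces your test-function plan:
\begin{itemize}
\item $u$ itself solves $(-\partial_y^2+u''/u)u=0$, and it lies in $H^2(\mathbb{R})$ because $u\sim 1/y$ and $u''\sim 2/y^3$.
\item Hence $|u|\in H^1$ has energy $\int |u|'^2+(u''/u)|u|^2\,dy=[uu']_{-\infty}^{\infty}=0$.
\item $|u|$ cannot be a minimizer, since minimizers solve the ODE and are $C^2$, while $|u|$ has a corner at $0$ ($u'(0)\neq 0$).
\item Therefore $\inf\sigma<0$. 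Since the essential spectrum is $[0,\infty)$, there is an eigenvalue $-\alpha_0^2<0$, and Lin's interval $(\alpha_1,\alpha_0)$ is nonempty.
\end{itemize}

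\textbf{The localization.} It is cleaner to cut off the stream function, $V_n=\nabla^\perp_{(\xi_1,\xi_2)}\bigl(\chi(\xi_1/n)\chi(\xi_3/n)\psi(\xi_2)e^{ik\xi_1}\bigr)$, which is exactly divergence free. By contrast, $\mathbb{P}(\chi_nV)-\chi_nV$ is a gradient whose $L^2$ norm is an $\dot H^{-1}$ norm. That norm is small only because of the $e^{ik\xi_1}$ oscillation; boundedness of $\mathbb{P}$ on $L^2$ does not give it. In either version, the $O(1/n)$ commutator bounds use $\psi,\psi'\in L^2(d\xi_2)$ and the modal pressure in $L^2(d\xi_2)$, not merely boundedness of $u$ and $\psi$.
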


We first recall a sufficient criterion for the existence of an unstable normal mode for the two-dimensional incompressible Euler equations due to Lin. Consider a shear flow in two-dimensions of the form
$$\overline{U}(x,y)=\left(P(y),0\right).$$
This is a solution to the two-dimensional Euler equations. The linearization of the Euler equations around $\overline{U}$ in vorticity form is 
$$\partial_t \omega + u\cdot\nabla \overline{\Omega} + \overline{U}\cdot\nabla \omega =0$$
$$K_{BS}\ast \omega =u$$
Suppose have the ansatz that the stream function $\psi$ associated to a solution $w$ of the linearization above is of the form
$$\psi(x,y,t) = \phi(y)e^{i\alpha(x-ct)}.$$
It turns out that in order for $\psi$ to be the stream function of a solution $w$, $\phi$ needs to solve Rayleigh's equation:
$$(P(y)-c)\left(\frac{d^2}{dy^2} -\alpha^2\right)\phi -P''\phi =0.$$
Evidently, an exponentially growing (in time) mode corresponds to $\Im(c)>0$.

We recall the following theorem from \cite{Lin} due to 
Lin.

\begin{theorem}
\label{LIN2}
Let $P(y)\in C^2(\mathbb{R})$ be odd. Suppose that the function
$$K(y) = \frac{-P''(y)}{P(y)}$$
is bounded and $\lim_{y\to \pm \infty} K(y)=0$. If $-\alpha_0^2 < -\alpha_1^2 <\ldots <-\alpha_k^2<\ldots <0$ are the negative eigenvalues of the operator $-\frac{d^2}{dy^2}-K(y)$ on $H^2(\mathbb{R})$, then there exists $c(\alpha)$ with $\Im{c(\alpha)}>0$ and a function $\phi_\alpha\in H^2(\mathbb{R})$ solving Rayleigh's equation for every 
$$\alpha \in (\alpha_1,\alpha_0) \cup (\alpha_3, \alpha_2) \cup \ldots (\alpha_{2k-1}, \alpha_{2k-2})\cup\ldots$$
\end{theorem}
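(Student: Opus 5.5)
The plan is to follow Lin's strategy: locate the neutral modes, bifurcate unstable modes off them, and then continue these unstable modes globally in $\alpha$. To begin, look at $c=0$. Because $P$ is odd and $P(0)=0$, Rayleigh's equation with $c=0$ can be divided by $P$ wherever this makes sense. It becomes the Schrödinger problem
$$-\phi''-K(y)\phi=-\alpha^2\phi .$$
Since $K$ is bounded and decays at infinity, the negative eigenvalues $-\alpha_k^2$ are discrete and simple, and the eigenfunctions $\phi_k\in H^2(\mathbb{R})$ are real. Because $K$ is even (both $P$ and $P''$ are odd), Sturm–Liouville theory gives that $\phi_k$ has exactly $k$ zeros and parity $(-1)^k$. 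Each pair $(\alpha_k,c=0,\phi_k)$ is therefore a neutral mode. The first thing I would prove is the converse: every limit of unstable modes is a neutral mode of this kind. Concretely, if $(\alpha_n,c_n,\phi_n)$ are unstable modes with $\Im c_n\to 0$ and $\alpha_n\to\alpha_*>0$, then $c_n\to 0$ and $\alpha_*\in\{\alpha_k\}$. For this I would use Howard's semicircle theorem, which confines $c$ to a compact set. I would combine it with the identity obtained by multiplying Rayleigh's equation by $\bar\phi/(P-c)$ and taking the imaginary part,
$$\Im c\int \frac{P''|\phi|^2}{|P-c|^2}\,dy=0,$$
which, since $K$ is bounded, pins the limiting critical level to the inflection value $c=0$.

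Next comes the local bifurcation. Near each $\alpha_k$ I would solve Rayleigh's equation for small $c$ by a Lyapunov–Schmidt or implicit function argument applied to the Wronskian (Evans-type) function $E(\alpha,c)$. This function is built from the solutions of Rayleigh's equation that decay at $\pm\infty$. For $\Im c>0$ it is analytic in $c$, and it extends continuously to $\Im c=0$ once the Frobenius/Tollmien treatment at the critical layer $P(y_c)=c$ is included; this is where the oddness of $P$ and the boundedness of $K$ are used. The derivative $\partial_c E(\alpha_k,0)$ picks up an imaginary contribution $\pi\,P''(y_c)/|P'(y_c)|\cdot|\phi_k(y_c)|^2$ from the critical layer, together with a real part. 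The sign of the resulting $dc/d\alpha$ decides on which side of $\alpha_k$ the branch has $\Im c>0$. I expect this sign to depend on the parity of $\phi_k$: an odd $\phi_k$ vanishes at $y_c=0$ and behaves differently from an even one. The outcome should be that the unstable branches emanate into $(\alpha_{2j+1},\alpha_{2j})$, to the left of the even-indexed $\alpha_{2j}$ and to the right of the odd-indexed $\alpha_{2j+1}$.

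For the global step, let $\mathcal{U}=\{\alpha>0:\ \text{an unstable mode exists}\}$. This set is open, by analyticity of $E$ in the upper half plane and Rouché's theorem. By the first step, its boundary is contained in $\{\alpha_k\}$, because a sequence of unstable modes cannot escape: Howard's bound prevents $c\to\infty$, and $\alpha$ stays away from $0$ on the relevant intervals. Connectedness of each interval $(\alpha_{2j+1},\alpha_{2j})$, together with the local bifurcation of the previous paragraph, which produces points of $\mathcal{U}$ inside it, then shows that the entire interval lies in $\mathcal{U}$. Finally, $H^2$ regularity of $\phi_\alpha$ follows directly from the equation once $\Im c>0$, since then $P-c$ never vanishes.

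I expect the main obstacle to be the local bifurcation computation. It requires controlling the critical-layer singularity of Rayleigh's equation uniformly as $\Im c\downarrow 0$, so that $E$ extends with enough regularity for the implicit function theorem, and it requires getting the correct sign of $\Im\,dc/d\alpha$ in the even and odd cases. That sign is precisely what produces the alternating pattern of intervals in the statement.
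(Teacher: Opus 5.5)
The paper does not prove this statement; it is quoted from Lin. Your outline (neutral limiting modes, bifurcation from them, continuation in $\alpha$) is the architecture of Lin's argument. However, the decisive step is set up incorrectly, and under the hypotheses you are using it cannot be closed.

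At $c=0$ the critical layer is at $y_c=0$, where $P''(0)=0$ because $P$ is odd. So the term $\pi P''(y_c)|\phi_k(y_c)|^2/|P'(y_c)|$ you attribute to $\partial_cE(\alpha_k,0)$ vanishes. That expression is the critical-layer term of $\int P''|\phi|^2/(P-c)$; it belongs in your first step, not in a $c$-derivative. The $c$-derivative of the potential is $P''/(P-c)^2=-KP/(P-c)^2$. Differentiate Rayleigh's equation along a branch and pair with $\phi$ without conjugation. This gives $\int\phi^2\,dy+\frac{dc}{d(\alpha^2)}\int\frac{P''\phi^2}{(P-c)^2}\,dy=0$, and at the neutral mode
\begin{equation*}
\lim_{c\to 0,\ \Im c>0}\int_{\mathbb{R}}\frac{P''\phi_k^2}{(P-c)^2}\,dy=-\mathrm{p.v.}\!\int_{\mathbb{R}}\frac{K\phi_k^2}{P}\,dy-i\pi\,\frac{K(0)\,\phi_k(0)^2}{|P'(0)|}
\end{equation*}
(if $0$ is the only zero of $P$). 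The principal value is zero because its integrand is odd.

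For odd $k$, $\phi_k(0)=0$, so the coefficient of $dc$ vanishes altogether. The implicit function theorem then gives no branch $c(\alpha)$, and the direction is decided at higher order. Your claim that these branches leave $\alpha_{2j+1}$ to the right is therefore unsupported. It is also unnecessary: nonemptiness of $\mathcal{U}\cap(\alpha_{2j+1},\alpha_{2j})$ only uses the branch at $\alpha_{2j}$.

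For even $k$, $\phi_k(0)\neq0$ and $\frac{dc}{d(\alpha^2)}=-i\,|P'(0)|\,\|\phi_k\|_{L^2}^2/(\pi K(0)\phi_k(0)^2)$. This is purely imaginary, as the symmetry $c\mapsto-\bar c$ of odd profiles forces. Hence the branch enters $\{\Im c>0\}$ on the side $\alpha<\alpha_{2j}$ if and only if $K(0)>0$. Boundedness and decay of $K$ do not fix this sign. Lin works in his class $\mathcal{K}^+$, where $K>0$, and your argument needs at least $K(0)>0$ as an extra hypothesis.

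The missing sign condition also breaks your first step. The identity $\Im c\int P''|\phi|^2/|P-c|^2\,dy=0$ does not by itself force $c_*=0$. The standard argument runs as follows:
\begin{itemize}
\item pass to the limit at a critical point $y_*$ with $P(y_*)=c_*\neq0$;
\item use the critical-layer term $\pi P''(y_*)|\phi_*(y_*)|^2/|P'(y_*)|$ to get $\phi_*(y_*)=0$;
\item note that $\psi=\phi_*/(P-c_*)$ solves $((P-c_*)^2\psi')'=\alpha_*^2(P-c_*)^2\psi$, hence $\psi=0$.
\end{itemize}
This needs $P''(y_*)=-K(y_*)c_*\neq0$, i.e. $K\neq0$ at critical points; boundedness of $K$ plays no role.

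When $K$ changes sign, $P$ has further inflection points. The Dawson profile $P(y)=D(y/2)$, to which the paper applies the theorem, has $K(y)\sim-2/y^2$ as $|y|\to\infty$. It therefore has inflection points at some $\pm y_0\neq0$. Your argument does not exclude neutral limiting modes at $c_*=\pm P(y_0)$, which could sit inside one of your intervals.

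On the whole line you must also rule out loss of mass of the normalized $\phi_n$ and critical layers escaping to infinity. For the Dawson profile $P\to0$ as $|y|\to\infty$, so the level $c=0$ is also reached at infinity. Finally, Howard's semicircle theorem only confines $c$ when $P$ is bounded, which the stated hypotheses do not guarantee.

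Under Lin's positivity assumption, and with these points handled, your continuation argument itself is sound: openness via Rouch\'e, relative closedness inside each interval, and connectedness.
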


\begin{proof}[{Proof of Theorem \ref{thm:unstable}}]
For us, the profile that must be examined is $P(y) = D(y/2)$, where $D$ is, once again, the Dawson function. This is an odd function, so we can apply Theorem \ref{LIN2}. For this profile we have
$$K(y) = \frac{-P''(y)}{P(y)} = \frac{1}{4} \left(\frac{y}{D\left(\frac{y}{2}\right)}-y^2+2\right)$$
The function $K$ is evidently bounded and limits to $0$ as $|y|\to \infty$, so by Theorem \ref{LIN2}, for every wave number in the union of intervals described above, there exists an unstable mode, a function $\phi(y) \in H^2(\mathbb{R})$ such that 
$$\psi(x,y,t) = \phi(y)e^{i\alpha(x-ct)}$$
is a solution to the Euler equations linearized around the background profile. The corresponding vorticity, which is $\Delta \psi$ up to sign, lies in $L^2(dy)$ whenever $x$ is fixed. By Squire's theorem (see \cite{DR}), the three-dimensional linearized Euler problem also has an unstably growing normal mode, which, we note here, does not lie in $L^2(\mathbb{R}^3)$ due to a lack of decay.

Nevertheless, by the elementary arguments presented in \cite{HS}, a growing normal mode can be used to construct a wave packet instability, which allows us to assert the existence of $\lambda$ with $\textrm{Re}(\lambda)>0$ that lies in the approximate point spectrum of the linearized Euler equations around the profile. 
\end{proof}

\begin{remark} 
We note that a similar analysis can be done for the other shear flows identified by Okamoto, they all appear to admit the existence of unstable parts of the spectrum that are approximate eigenvalues.
\end{remark}

We note that, for analytic semigroups, even an approximate unstable eigenvalue guarantees the existence a nontrivial trajectory backwards in time $t\to -\infty$ (see \cite{Lu}), which would be sufficient for the non-uniqueness program, as we discussed in Section 2. Note that neither the linearization of the self-similar Navier-Stokes equation nor the linearization of the Euler equation gives rise to an analytic semigroup. In the example above, then, the primary barrier to proving non-uniqueness from the data $1/y$ appears to be making sense of the singular limit to the Navier-Stokes equations when the profile anisotropically decays. We plan to investigate this direction further. 

\section{Stability of the Oseen Vortex}

We prove the stability of the Euler equations linearized around the Oseen vortex in physical coordinates. This result is related to, but completely independent of the long-time stability of the Oseen vortex discovered by Gallay and Wayne in \cite{GW}. Our result here shows that non-uniqueness in $L^{2,\infty}$ for the corresponding homogeneous data $1/|x|$ in $\mathbb{R}^2$ cannot be proven via a singular limit argument. Combined with our Liouville theorem in two-dimensions, we can in fact claim that the singular limit argument is completely ruled out in $\mathbb{R}^2$. The question of non-uniqueness from the data $\beta/|x|$ for large $\beta$ in the Koch-Tataru path-space, which was posed by Koch and Tataru in \cite{KT}, remains open.

Recall the velocity field of the Oseen vortex:
$$\overline{u}(r,\theta) = \frac{1-e^{-r^2/4}}{r}.$$
The corresponding vorticity is:
$$\overline{\omega}(r,\theta) = \frac{1}{2}e^{-r^2/4}$$
and the stream function is
$$\overline{\psi}(r,\theta)=\log (r)-\frac{1}{2}\text{Ei}\left(\frac{-r^2}{4}\right).$$
We first consider the linearization of the two-dimensional Euler equations in vorticity form around the vorticity profile $\overline{\omega}$:
$$L_\phi \omega = -\overline{u}\cdot\nabla \omega -u\cdot\nabla\overline{\omega},$$
where $u= K_{BS}\ast \omega$ is given by convolution against the Biot-Savart operator. Since the vorticity profile $\overline{\omega}$ is smooth and uniformly decays, $L_\phi$ is now truly the compact perturbation of a skew-adjoint and dissipative operator as in \cite{ABCD}. The only possibility for instability is the existence of an unstable eigenvalue in the right-half plane, which we now show is impossible.

We proceed by using the following orthogonal decomposition of $L^2(\mathbb{R}^2)$:
$$L^2(\mathbb{R}^2) = \bigoplus^{\ell^2}_{k\in\mathbb{Z}} U_k,$$
where 
$$U_k := \{g(r)e^{ik\theta}: g(r) \in L^2(\mathbb{R}^+,rdr) \}.$$
We may reduce the analysis of $L_\phi$ on $L^2(\mathbb{R}^2)$ to the analysis of $L_\phi$ on each orthogonal subspace $U_k$, or the following differential operator on $U_k$:
$$(L_k g)(r) := -ik\left( \frac{1-e^{-r^2/4}}{r^2}\cdot g(r)+ \frac{e^{-r^2/4}}{4}\cdot f(r)\right)$$
where 
$$f''(r) + \frac{f'(r)}{r} -\frac{k^2f(r)}{r^2} = g(r).$$
We perform an exponential change of variables
$$F(s) := f(e^s)e^{-s} \qquad G(s) = g(e^s) e^s,$$
as in \cite{BC}, so that the problem is equivalent to one on $L^2(\mathbb{R},ds)$. We get the operator
$$(\tilde{L}_k G)(s) =  -ik \cdot\frac{1}{4} e^{-3 s-\frac{e^{2 s}}{4}} \left(e^{4 s} F(s)+4 \left(e^{\frac{e^{2
   s}}{4}}-1\right) G(s)\right)$$
where
$$G(s) = F''(s) + 2F'(s) +(1-k^2)F(s).$$
We look for an eigenfunction $G\in L^2(ds)$ or, equivalently, an eigenfunction $F\in W^{2,2}(\mathbb{R})$ of the following differential equation:
$$-ik \cdot\frac{1}{4} e^{-3 s-\frac{e^{2 s}}{4}} \left(e^{4 s} F(s)+4 \left(e^{\frac{e^{2
   s}}{4}}-1\right) \left(F''(s) + 2F'(s) +(1-k^2)F(s)\right)\right) =$$ $$ \lambda\left(F''(s) + 2F'(s) +(1-k^2)F(s)\right).$$
   We simplify this expression and get:
   $$F''(s) + 2F'(s) +(1-k^2)F(s) = P_{\lambda,k}(s)F(s),$$
   where
   $$P_{\lambda,k}(s):=\frac{k e^{4 s}}{-4 k \left(e^{\frac{e^{2 s}}{4}}-1\right)+4 i \lambda  e^{2 s+\frac{e^{2
   s}}{4}}}.$$
   As long as $\textrm{Re}(\lambda)>0$, the potential $P_{\lambda,k}(s)$ is smooth and exponentially decaying to zero as $|s|\to \infty$. Using the standard theory of ordinary differential equations (see \cite{CL}), we know that, if there exists an eigenfunction, it behaves asymptotically when $|s|\to \infty$ as the solutions to the limiting differential equation:
   $$F''(s) + 2F'(s) +(1-k^2)F(s) =0.$$
   In the case when $k=0,1,-1$, we see immediately that there cannot be any decaying solution as $s\to -\infty$, and these modes are discounted. 
  Now we deal with the case when $k\geq 2$ (the analysis when $k\leq -2$ is analogous). If $F\in W^{2,2}(\mathbb{R})$, this means that 
   $$F(s)\sim c_-e^{(k-1)s} \textrm{ as }s\to -\infty, \qquad F(s)\sim c_+e^{(-1-k)s} \textrm{ as } s\to +\infty,$$
   for some constants $c_-, c_+$.
   The derivative $F'(s)$ decays in a similar fashion. Moreover, due to this decay, we have that $e^{s}F\in L^2(ds)$ and $e^{s}F'\in L^2(ds)$. Thus, if we multiply the differential equation by $e^{2s}\overline{F}$ and integrate by parts (which is now justified), we get:
   $$\int_{\mathbb{R}} e^{2s} |F'|^2ds +(k^2-1)\int_{\mathbb{R}} e^{2s}|F|^2 ds = -\int_{\mathbb{R}} e^{2s}P_{k,\lambda}(s) |F|^2 ds.$$
   The left-hand-side of the equality above is real, so the imaginary part of the integral on the right-hand-side must be zero, which, since $\textrm{Re}(\lambda)>0$ and the imaginary part of $P_{k,\lambda}$ is signed, can only occur if $F=0$ identically. Indeed, if $\lambda = a+ib$ with $a,b\in\mathbb{R}$, then
   $$\textrm{Im}(P_{k,\lambda}(s)) = \frac{-a k e^{6 s+\frac{e^{2 s}}{4}}}{4 \left(a^2 e^{4 s+\frac{e^{2
   s}}{2}}+\left(k-e^{\frac{e^{2 s}}{4}} \left(b e^{2 s}+k\right)\right)^2\right)},$$
   which has a strict sign as long as $a\neq 0$.
   To wit, we have proven
   \begin{proposition}
       The spectrum of the linearization of the two-dimensional Euler equations in vorticity form around the Oseen vortex is contained in the imaginary axis.
   \end{proposition}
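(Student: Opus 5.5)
The plan is to combine the structural decomposition of $L_\phi$ with the mode-by-mode computation carried out just above. \emph{First}, I would write $L_\phi = M + K$ on $L^2(\mathbb{R}^2)$, where
\[
M\omega = -\overline{u}\cdot\nabla\omega, \qquad K\omega = -(K_{BS}\ast\omega)\cdot\nabla\overline{\omega}.
\]
Since $\overline{u}$ is smooth, bounded and divergence free, $M$ (with its natural domain) is skew-adjoint, so $\sigma(M)\subset i\mathbb{R}$. Since $\overline{\omega}=\tfrac12 e^{-r^2/4}$ is smooth and rapidly decaying, $K$ is compact on $L^2(\mathbb{R}^2)$, as in Lemma~\ref{lem:K compact} and \cite{ABCD}. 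By Weyl's theorem on compact perturbations, $\sigma_{\mathrm{ess}}(L_\phi)=\sigma_{\mathrm{ess}}(M)\subset i\mathbb{R}$. Moreover, $\|R(\lambda,M)\|\le |\Re\lambda|^{-1}$, so $\|R(\lambda,M)K\|\to 0$ as $|\Re\lambda|\to\infty$. The analytic Fredholm theorem, applied to $1-R(\lambda,M)K$ on each of the half-planes $\{\Re\lambda>0\}$ and $\{\Re\lambda<0\}$, then shows that $\sigma(L_\phi)\setminus i\mathbb{R}$ consists only of isolated eigenvalues of finite multiplicity. It therefore suffices to rule out eigenvalues $\lambda=a+ib$ with $a\neq 0$.

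\emph{Second}, I would use that $L_\phi$ commutes with rotations, so it preserves each $U_k$ in the orthogonal decomposition $L^2(\mathbb{R}^2)=\bigoplus_k U_k$. An eigenfunction may then be taken in a single $U_k$, and it reduces to an eigenfunction of $L_k$. Via the exponential change of variables this becomes $F\in W^{2,2}(\mathbb{R})$ solving
\[
F''+2F'+(1-k^2)F=P_{\lambda,k}F.
\]
The case $k=0$ is trivial, since $L_0=0$ and so $\lambda=0$. For $k=\pm1$, there is no solution decaying as $s\to-\infty$, as already observed above.

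\emph{Third}, for $|k|\ge2$ I would use the asymptotics
\[
F\sim c_-e^{(|k|-1)s}\ (s\to-\infty), \qquad F\sim c_+e^{-(|k|+1)s}\ (s\to+\infty),
\]
which come from the ODE theory of \cite{CL}, since $P_{\lambda,k}$ decays exponentially at both ends whenever $a\neq0$. These asymptotics justify multiplying the equation by $e^{2s}\overline F$ and integrating by parts, which yields
\[
\int e^{2s}|F'|^2+(k^2-1)\int e^{2s}|F|^2=-\int e^{2s}P_{\lambda,k}|F|^2 .
\]
Taking imaginary parts and using the explicit formula for $\operatorname{Im}P_{\lambda,k}$, whose sign is that of $-ak$ and which is nonzero everywhere when $a\neq0$, forces $F\equiv0$. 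This argument covers both signs of $a$, so no separate symmetry argument (such as $\lambda\mapsto-\bar\lambda$) is needed.

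The main obstacle is the justification in the first and third steps. In the first step, one must check that the analytic Fredholm alternative genuinely applies on each half-plane; this needs the compactness of $K$ in the $L^2$ setting, where the Biot--Savart law has weak decay, together with the smallness of $R(\lambda,M)K$ far from the axis. In the third step, one must verify that $P_{\lambda,k}$ has no poles for $a\neq0$, that is, that its denominator cannot vanish. This should follow because the imaginary part of the denominator is $4a\,e^{2s+e^{2s}/4}\neq0$. I would also verify the decay rates carefully enough that the boundary terms in the integration by parts vanish.
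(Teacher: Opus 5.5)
Your route is the paper's own: the splitting into the $U_k$, the variable $F(s)=f(e^s)e^{-s}$, the weighted energy identity, and the sign of $\mathrm{Im}\,P_{\lambda,k}$. Your refinements are correct: $L_0=0$, covering both signs of $\mathrm{Re}\,\lambda$, and the nonvanishing denominator. The genuine gap is the sector $k=\pm1$. You dismiss it, as the text before the proposition does, because no solution of the limiting equation decays as $s\to-\infty$. Decay is not a valid requirement there. For $|k|=1$ the map $F\mapsto F''+2F'$ has symbol $-\xi(\xi-2i)$, which vanishes at $\xi=0$. So it is not an isomorphism $W^{2,2}(\mathbb{R})\to L^2(\mathbb{R})$, and $G\in L^2(ds)$ does not force $F\in W^{2,2}$. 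The regular stream function has $f(r)\sim cr$ at $r=0$, i.e.\ $F(s)\to c$. For example, $\partial_{x_1}\overline{\omega}\in\ker L_\phi$ has $U_1$-component with $f(r)=\frac{1-e^{-r^2/4}}{2r}$, so $F(s)\to\frac18$. The decay criterion therefore excludes nothing in this sector.

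What works instead is your energy identity with $k^2-1=0$. For $\mathrm{Re}\,\lambda\neq0$, the relation $\bigl(\lambda+ik\frac{1-e^{-r^2/4}}{r^2}\bigr)g=-\frac{ik}{4}e^{-r^2/4}f$ gives Gaussian decay of $g$, since $f$ grows at most linearly. Hence $F=c+O(e^{2s})$ and $F'=O(e^{2s})$ as $s\to-\infty$, and $F,F'=O(e^{-2s})$ as $s\to+\infty$. The weight $e^{2s}$ makes every term integrable and kills the boundary terms, so $\int e^{2s}|F'|^2=-\int e^{2s}P_{\lambda,k}|F|^2$, and the sign of $\mathrm{Im}\,P_{\lambda,k}$ forces $F\equiv0$. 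The left side is just the kinetic energy $\int_0^\infty(|f'|^2+k^2r^{-2}|f|^2)\,r\,dr$ of the mode, which is finite for every $k\neq0$.

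The obstacle you flag in your first step is also real, and it lives in the same sector. On $U_{\pm1}$ the Biot--Savart stream function contains $r\int_r^\infty g(\rho)\,d\rho$. This can diverge for $g\in L^2(\rho\,d\rho)$, e.g.\ $g=(\rho\log\rho)^{-1}$ for $\rho>2$, so $K$ is not bounded on $U_{\pm1}$, let alone compact. In fact, take $g_n=(\rho\log R_n)^{-1}\mathbf{1}_{[R_n,R_n^2]}(\rho)$. Then $g_n\to0$ in $L^2(\rho\,d\rho)$ while $\int g_n\,d\rho=1$, so $L_\phi(g_ne^{i\theta})$ tends to a nonzero multiple of $re^{-r^2/4}e^{i\theta}$. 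Thus $L_\phi$ is not even closable there with the standard Biot--Savart law.

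On $\bigoplus_{|k|\neq1}U_k$ your compactness and analytic-Fredholm reduction is fine. There $K=0$ on $U_0$, and for $|k|\geq2$ the restriction $K|_{U_k}$ is Hilbert--Schmidt with norm $O(|k|^{-1/2})$. So your argument, like the paper's, proves the statement on $\bigoplus_{|k|\neq1}U_k$, for instance for $m$-fold symmetric vorticity with $m\geq2$. With the fix above it also excludes eigenvalues off $i\mathbb{R}$ for $k=\pm1$. A claim about the full spectrum on $L^2(\mathbb{R}^2)$ additionally requires saying how the velocity is fixed in the $k=\pm1$ sector and treating that sector separately.
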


More generally, we have the following criterion for stability of a radial vortex on the whole plane:
\begin{theorem} Suppose $u(r)$ is a smooth function on $[0,\infty)$ that vanishes at $r=0$ and satisfies $u^{(k)}(r) = \mathcal{O}(1/r^{k})$ as $r\to \infty$ for all $k\geq 0$.
Suppose $\overline{u}(r,\theta) = u(r)e_\theta$ is the velocity field of a radial vortex on the plane with vorticity $\overline{\omega}(r)$. The spectrum of the linearization of the Euler equations in vorticity form around $\overline{\omega}$ as an operator on $L^2(\mathbb{R}^2)$ is contained in the imaginary axis if either of the following is true:
$$\overline{\omega}'(r)>0 \quad \textrm{ or } \quad \overline{\omega}'(r)<0.$$
\end{theorem}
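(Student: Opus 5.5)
The plan is to repeat the Oseen-vortex argument above for a general radial vortex, keeping the structure of the potential but replacing explicit formulas by sign information. Write $\overline{u}=u(r)e_\theta$ and $\Omega(r):=u(r)/r$ for the angular velocity, so that $\overline{\omega}=u'+u/r$. By the stated decay hypotheses, $\overline{\omega}$ and $\overline{\omega}'$ decay at infinity, and $u$ is smooth at the origin. Hence $u\cdot\nabla\overline{\omega}$ is a compact perturbation of the skew-adjoint transport $-\overline{u}\cdot\nabla$, as in \cite{ABCD}. The essential spectrum therefore lies on $i\mathbb{R}$, and it suffices to exclude eigenvalues $\lambda$ with $\Re\lambda\neq 0$. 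By the symmetry $\lambda\mapsto-\bar\lambda$ (time reversal combined with the reflection $\theta\mapsto-\theta$), it is enough to treat $\Re\lambda>0$.

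Decompose into the Fourier modes $U_k$. The mode $k=0$ is trivially zero, since $L_0=0$. For $k\neq 0$ the eigenvalue problem reads
$$-ik\big(\Omega(r) g + \tfrac{\overline{\omega}'(r)}{r} f\big)=\lambda g,\qquad g=f''+\tfrac{f'}{r}-\tfrac{k^2}{r^2}f.$$
Since $\Re\lambda>0$, the factor $\lambda+ik\Omega(r)$ never vanishes. We can therefore solve for $g$ and obtain the Rayleigh-type equation
$$f''+\tfrac{f'}{r}-\tfrac{k^2}{r^2}f = Q(r) f,\qquad Q(r):=\frac{-ik\,\overline{\omega}'(r)/r}{\lambda+ik\Omega(r)}.$$
Now multiply by $r\bar f$ and integrate over $(0,\infty)$. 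This gives
$$\int_0^\infty \big(r|f'|^2+\tfrac{k^2}{r}|f|^2\big)\,dr=-\int_0^\infty rQ|f|^2\,dr,$$
and the left-hand side is real. Writing $\lambda=a+ib$, we compute
$$\Im Q=\frac{-k\,\overline{\omega}'(r)\,a}{r\,|\lambda+ik\Omega|^2}.$$
For $a>0$ and fixed $k$, this has a strict sign wherever $\overline{\omega}'$ has a strict sign. Hence $\int r\,\Im Q\,|f|^2=0$ forces $f\equiv 0$, so $g=0$ and there is no eigenvalue. (One can equivalently work in the variable $s=\log r$ as in \cite{BC}.)

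The main obstacle is justifying the integration by parts, i.e. the boundary behaviour of $f$. The assumption that $g\in L^2(r\,dr)$ only controls $f$ weakly. I would handle this as in the Oseen case. The potential $rQ$ is bounded near $0$, and near $\infty$ it decays thanks to $\overline{\omega}'=\mathcal{O}(r^{-3})$ and $|\lambda+ik\Omega|\geq a$. So ODE asymptotics \cite{CL} show that $f\sim c_0 r^{|k|}$ as $r\to0$ and $f\sim c_\infty r^{-|k|}$ as $r\to\infty$, since the other branches are excluded by $f=\mathrm{BS}$-type integrability. For $|k|\geq1$, the boundary terms $r f'\bar f$ then vanish at both ends. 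The case $|k|=1$ needs care at infinity, where $f\sim r^{-1}$, but $rf'\bar f\sim r^{-1}\to 0$ still holds. I also need $\overline{\omega}'$ to be nonvanishing on all of $(0,\infty)$; at $r=0$, $\overline{\omega}'(0)=0$ by smoothness and radial symmetry, but a single point does not affect the conclusion $\int r\,\Im Q\,|f|^2=0\Rightarrow f=0$ a.e.
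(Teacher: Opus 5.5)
Your proposal is correct and is essentially the paper's argument. Compactness of $u\cdot\nabla\overline{\omega}$ reduces everything to ruling out eigenvalues with $\textrm{Re}\,\lambda\neq 0$, and in each Fourier mode the Rayleigh-type equation tested against $r\bar f$ has a real left-hand side while $\textrm{Im}\,Q$ is signed by $\overline{\omega}'$; the paper does the identical computation in $s=\log r$, with $F=f/r$ and weight $e^{2s}\bar F$. The small slips are harmless: the sign in front of the $\overline{\omega}'$ term, and $\overline{\omega}'=\mathcal{O}(r^{-2})$ rather than $\mathcal{O}(r^{-3})$ under the stated hypotheses, which still gives $rQ=\mathcal{O}(r^{-2})$. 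Your direct treatment of $k=0$ and $|k|=1$ through vanishing boundary terms is, if anything, more careful than the paper's dismissal of those modes.
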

\begin{proof}
  Due to the decay, the term $-u\cdot\nabla\overline{\omega}$ is a compact perturbation of the skew-adjoint operator $-\overline{u}\cdot\nabla\omega$. Therefore, besides essential spectrum on the imaginary axis, there can only be discrete eigenvalues. We proceed to rule out the existence of such eigenvalues.

  As above, we can reduce the question to whether there exists $F(s)\in W^{2,2}(\mathbb{R})$ solution to the following ordinary differential equation:
 $$ F''(s) + 2F'(s) +(1-k^2)F(s) = F(s)P_{\lambda,k}(s),$$
 where
 $$P_{\lambda,k}(s) = \frac{i k  \left(e^{2 s} u''\left(e^s\right)+e^s
   u'\left(e^s\right)-u\left(e^s\right)\right)}{\lambda  e^s+i k u(e^s)}.$$
All we require is that $P_{\lambda,k}(s)$ is smooth (guaranteed whenever $\textrm{Re}(\lambda)\neq 0$) and exponentially decaying as $|s|\to \infty$ (guaranteed by our hypotheses on $u(r)$).
Suppose $\lambda = a+ib$ where $a,b\in \mathbb{R}$. Then:
$$\textrm{Im}(P_{\lambda,k}(s)) = \frac{a k e^s \left(e^s \left(e^s
   u''\left(e^s\right)+u'\left(e^s\right)\right)-u\left(e^s\right)\right)}{e^{2 s}
   \left(a^2+b^2\right)+k u\left(e^s\right) \left(2 b e^s+k u\left(e^s\right)\right)}.$$
   By our hypothesis on $\overline{\omega}$, we see that $\textrm{Im}(P_{\lambda,k}(s))$ is a signed quantity when $a\neq 0$. We now achieve a contradiction using energy estimates, as we did in the case of the Oseen vortex.
\end{proof}

This stability criterion should be considered analogous to Rayleigh's inflection point criterion for shear flows, and the proof above is the first rigorous demonstration of such a stability criterion on the whole plane. We leave to the reader to find the sharpest vanishing, decay, and smoothness conditions that allow the conclusion of the theorem to hold.

\end{document}